\newtheorem{lema}{Lemma}
\newtheorem{dfn}{Definition}
\newtheorem{thm}{Theorem}
\newtheorem{pps}{Proposition}
\newtheorem{cor}{Corollary}
\newtheorem{rem}{Remark}
\title{Quillen Model Structures-Based Notions of Locality of Logics over Finite Models}
\author{
  Hendrick Maia
  \\
  \texttt{hendrickmaia@gmail.com} \\
}
\begin{document}
\maketitle

\begin{abstract}

Locality is a property of logics, based on Hanf’s and Gaifman’s theorems, and that was shown to be very useful in the context of finite model theory. In this paper I present a homotopic variation for locality, namely a Quillen model category-based framework for locality under $k$-logical equivalence, for every primitive-positive sentence of quantifier-rank $k$. 

\end{abstract}

\keywords{Locality under $k$-logical equivalence \and locality under isomorphism \and Quillen model category-based framework \and finite models \and descriptive complexity}

\section{Introduction}

Locality is a property of logics, whose origins lie in the works of Hanf [14] and Gaifman [12], having their utility in the context of finite model theory. Such a property is quite useful in proofs of inexpressibility, but it is also useful in establishing normal forms for logical formulas. 

There are generally two forms of locality: (i') if two structures $\mathfrak{A}$ and $\mathfrak{B}$ realize the same multiset of types of neighborhoods of radius $d$, then they agree on a given sentence $\Phi$. Here $d$ depends only on $\Phi$; (ii')  if the $d$-neighborhoods of two tuples $\vec{a}_1$ and $\vec{a}_2$ in a structure $\mathfrak{A}$ are isomorphic, then $\mathfrak{A} \models \Phi(\vec{a}_1) \Leftrightarrow \Phi(\vec{a}_2)$. Again, $d$ depends on $\Phi$, and not on $\mathfrak{A}$. Form (i') originated from Hanf's works [14]. Form (ii') came from Gaifman's theorem [12]. Before proceeding, I will establish some notation. 

\noindent \textbf{Notations:} All structures here are finite, whose vocabularies are finite sequences of relation symbols $\sigma = \langle R_1,...,R_l \rangle$. A $\sigma$-structure $\mathfrak{A}$ consists of a finite universe $A$ and an interpretation of each $p_i$-ary relation symbol $R_i$ in $\sigma$ as $R^{\mathfrak{A}}_i \subseteq A^{p_i}$. 

Given two structures $\mathfrak{A}$ and $\mathfrak{B}$ of a relational vocabulary $\sigma$, a homomorphism between them is a mapping $h : \mathfrak{A} \rightarrow \mathfrak{B}$ such that for each constant symbol $c$ in $\sigma$, we have $h(c^{\mathfrak{A}}) = c^{\mathfrak{B}}$, and for each $k$-ary relation symbol $R$ and a tuple $(a_1,...,a_k) \in R^{\mathfrak{A}}$, the tuple $(h(a_1),...,h(a_k))$ is in $R^{\mathfrak{B}}$. A bijective homomorphism $h$ whose inverse is also a homomorphism is called an \emph{isomorphism}.
If there is an isomorphism between two structures $\mathfrak{A}$ and $\mathfrak{B}$, we say that they are isomorphic, and we write $\mathfrak{A} \cong \mathfrak{B}$.

$\mathbf{STRUCT}[\sigma]$ denotes the category of $\sigma$-structures. I shall use the notation $\sigma_n$ for $\sigma$ expanded with $n$ constant symbols.  

The quantifier-rank of a formula $\Phi$ is the maximal nesting depth of quantifiers in $\Phi$. 

Given a structure $\mathfrak{A}$, its \emph{Gaifman graph} $\mathcal{G}(\mathfrak{A})$ is defined as $\langle A,E \rangle$ where $(a,b)$ is in $E$ if, and only if there is a tuple $\vec{c} \in R^{\mathfrak{A}}_i$ for some $i$ such that both $a$ and $b$ are in $\vec{c}$. The distance $d(a,b)$ is defined as the length of the shortest path from $a$ to $b$ in $\mathcal{G}(\mathfrak{A})$; we assume $d(a,a) = 0$. If $\vec{a} = (a_1,...,a_n)$, then $d(\vec{a},b) = \mathrm{min}_id(a_i,b)$. Given $\vec{a}$ over $A$, its \emph{$r$-ball} $B_r^{\mathfrak{A}}(\vec{a})$ is $\{b \in A \mid d(\vec{a},b) \leq r\}$. If $|\vec{a}| = n$, its \emph{$r$-neighborhood} $N_r^{\mathfrak{A}}(\vec{a})$ is defined as a $\sigma_n$-structure
$$\langle B_r^{\mathfrak{A}}(\vec{a}), R_1^{\mathfrak{A}} \cap B_r^{\mathfrak{A}}(\vec{a})^{p_1},..., R_l^{\mathfrak{A}} \cap B_r^{\mathfrak{A}}(\vec{a})^{p_l}, a_1,...,a_n \rangle.$$
Note that for any isomorphism $h:N_r^{\mathfrak{A}}(\vec{a}) \rightarrow N_r^{\mathfrak{B}}(\vec{b})$ it must be the case that $h(\vec{a}) = \vec{b}$.

Given a tuple  $\vec{a} = (a_1,...,a_n)$ and an element $c$, we write  $\vec{a}c$ for the tuple $(a_1,...,a_n,c)$. 

An $m$-ary query, $m \geq 0$, on $\sigma$-structures, is a mapping $Q$ that associates with each structure $\mathfrak{A}$ a subset of $A^m$, such that $Q$ is closed under isomorphism: if $\mathfrak{A} \cong \mathfrak{B}$ via isomorphism $h : A \rightarrow B$, then $Q(\mathfrak{B}) = h(Q(\mathfrak{A}))$.

We write $\mathfrak{A} \equiv_k \mathfrak{B}$ if $\mathfrak{A}$ and $\mathfrak{B}$ agree on all FO sentences of quantifier-rank up to $k$, and $(\mathfrak{A},\vec{a}) \equiv_k (\mathfrak{B},\vec{b})$ if $\mathfrak{A} \models \Phi(\vec{a}) \Leftrightarrow \mathfrak{B} \models \Phi(\vec{b})$ for every FO formula $\Phi(\vec{x})$ of quantifier rank up to $k$ ($k$-logical equivalence). It is well known that $\mathfrak{A} \equiv_k \mathfrak{B}$ if, and only if, the duplicator has a winning strategy in the $k$-round Ehrenfeucht-Fra\"iss\'e game on $\mathfrak{A}$ and $\mathfrak{B}$, and $(\mathfrak{A},\vec{a}) \equiv_k (\mathfrak{B},\vec{b})$ if, and only if, the duplicator has a winning strategy in the $k$-round Ehrenfeucht-Fra\"iss\'e game on $\mathfrak{A}$ and $\mathfrak{B}$ starting in position $(\vec{a},\vec{b})$.

\qed

There is no doubt about the usefulness of the notion of locality, which as seen applies to a huge number of situations. However, there is a deficiency in such a notion: all versions of the notion of locality refer to isomorphism of neighborhoods, which is a fairly strong property. For example, where structures simply do not have sufficient isomorphic neighborhoods, versions of the notion of locality obviously cannot be applied. So the question that immediately arises is: would it be possible to weaken such a condition, and maintain Hanf / Gaifman-localities?

Arenas, Barceló and Libkin [1] establish a new condition for the notions of locality, weakening the requirement that neighborhoods should be isomorphic, establishing only the condition that they must be indistinguishable in a given logic. That is, instead of requiring $N_d(\vec{a}) \cong N_d(\vec{b})$, you should only require $N_d(\vec{a}) \equiv_k N_d(\vec{b})$, for some $k \geq 0$. Using the fact that logical equivalence is often captured by Ehrenfeucht – Fra{\"i}ssé games, the authors formulate a game-based framework in which logical equivalence-based locality can be defined. Thus, the notion defined by the authors is that of \emph{game-based locality}.

Note that the intuitive point from which the authors start is the idea of \emph{neighborhood indistinguishability}. Thus, the intuition behind the notion of game-based locality is to describe the indistinguishability of neighborhoods in terms of \emph{winning game strategies}. To achieve the necessary generalization, Arenas, Barceló and Libkin define an abstract view of the games that characterize the expressiveness of logics that are local under isomorphism. The basic idea is as follows: in each round the duplicator has a set of functions (tactics) that will determine his responses to possible moves by the spoiler. In order to capture this idea, the authors define the abstract notion of \emph{agreement} ([1] p.5).  

\begin{dfn}
	An \emph{agreement} $\mathfrak{F}$ assigns to each pair $A,B$ of finite subsets of $U$ a collection 
	
	$$\mathfrak{F}(A,B) = \{\mathcal{F}_1(A,B),...,\mathcal{F}_m(A,B)\},$$ 
	
	\noindent where each $\mathcal{F}_i(A,B)$ is a nonempty collection of partial functions $f:A \rightarrow B$. We call the sets $\mathcal{F}_i(A,B)$ \emph{tactics}. 
	
	\noindent The $\mathfrak{F}$-game on $(\mathfrak{A},\vec{a}_0)$ and $(\mathfrak{B},\vec{b}_0)$ is played as follows. Suppose after $i$ rounds the position is $(\vec{a}_0\vec{a},\vec{b}_0\vec{b})$ (before the game starts, the tuples $\vec{a},\vec{b}$ are empty). Then, in round $i+1$:
	
	\begin{enumerate}
		\item The spoiler chooses a structure, $\mathfrak{A}$ or $\mathfrak{B}$. Below we present the moves assuming he chose $\mathfrak{A}$, the case of $\mathfrak{B}$ is symmetric.
		\item The duplicator chooses a tactic $\mathcal{F}(A,B) \in \mathfrak{F}(A,B)$.
		\item The spoiler chooses a partial function $f \in \mathcal{F}(A,B)$ and an element $a \in \mathrm{dom}(f)$; the game continues from the position $(\vec{a}_0\vec{a}a,\vec{b}_0\vec{b}f(a))$.
	\end{enumerate}

	The duplicator wins after $k$-rounds if both $\mathfrak{F}(A,B)$ and $\mathfrak{F}(B,A)$ are non-empty, and the final position defines a partial isomorphism between $(\mathfrak{A},\vec{a}_0)$ and $(\mathfrak{B},\vec{b}_0)$. If the duplicator has a winning strategy for the $k$-round game, we write $(\mathfrak{A},\vec{a}_0) \equiv_k^{\mathfrak{F}} (\mathfrak{B},\vec{b}_0)$.
\end{dfn}

The notions of \emph{game for a logic} and \emph{capture} are also defined:

\begin{dfn}
	
	Given an agreement $\mathfrak{F}$, we say that the $\mathfrak{F}$-game is a \emph{game for a logic $\mathcal{L}$} if there exists a partition $\{\mathcal{L}_0, \mathcal{L}_1,...\}$ of the formulae in $\mathcal{L}$ such that for every $k \geq 0$, there exists $k' \geq 0$ with the property that 
	
	$$(\mathfrak{A},\vec{a}_0) \equiv_{k'}^{\mathfrak{F}} (\mathfrak{B},\vec{b}_0) \textrm{ implies } (\mathfrak{A} \models \varphi(\vec{a}) \Leftrightarrow \mathfrak{B} \models \varphi(\vec{b})), \textrm{ } \textrm{ for all } \varphi \in \mathcal{L}_k.$$
	\noindent If the converse holds as well, that is, for every $k' \geq 0$ there exists $k \geq 0$ such that, $(\mathfrak{A},\vec{a}_0) \equiv_{k'}^{\mathfrak{F}} (\mathfrak{B},\vec{b}_0)$, whenever $\mathfrak{A} \models \varphi(\vec{a}) \Leftrightarrow \mathfrak{B} \models \varphi(\vec{b})$ for every $\varphi \in \mathcal{L}_k$, then we say that the $\mathfrak{F}$-game \emph{captures} $\mathcal{L}$.
	
\end{dfn}

In the following, keep in mind that $\mathcal{L}_k$ will always be associated with the set of $\mathcal{L}$-formulae of quantifier rank $k$. Furthermore, if $\mathfrak{F}$ is a game for a logic $\mathcal{L}$, and $\mathfrak{F}'$-games capture $\mathcal{L}$, then for every $k \geq 0$ there exists $k' \geq 0$ such that

$$(\mathfrak{A},\bar{a}) \equiv_{k'}^{\mathfrak{F}} (\mathfrak{B},\bar{b}) \Leftrightarrow (\mathfrak{A},\bar{a}) \equiv_k^{\mathfrak{F}'} (\mathfrak{B},\bar{b}).$$

We will now see how Arenas, Barceló and Libkin weaken in [1] the requirement that neighborhoods should be isomorphic. For $d, \ell \geq 0$, Arenas, Barceló and Libkin use the notation $(\mathfrak{A}, \bar{a}) \rightleftarrows_{d,\ell}^{\mathfrak{F}} (\mathfrak{B},\bar{b})$ if there exists a bijection $f:A \rightarrow B$ such that $N_d^{\mathfrak{A}}(\bar{a},c) \equiv_\ell^{\mathfrak{F}} N_d^{\mathfrak{B}}(\bar{b},f(c)), \textrm{ } \textrm{ for every } c \in A.$

\begin{dfn}
	
	An agreement $\mathfrak{F}$ is: 
	\begin{itemize}
		\item \emph{Hanf-local} if for every $k,m \in \mathbb{N}$, there exists $d,\ell \in \mathbb{N}$ such that for every two structures $\mathfrak{A}, \mathfrak{B}$, $\vec{a} \in A^m$ and $\vec{b} \in B^m$,
		
		$$(\mathfrak{A},\vec{a}) \rightleftarrows_{d,\ell}^{\mathfrak{F}} (\mathfrak{B},\vec{b}) \Rightarrow (\mathfrak{A},\vec{a}) \equiv_{k}^{\mathfrak{F}} (\mathfrak{B},\vec{b}).$$
		
		\item \emph{Gaifman-local} if for every $k,m \in \mathbb{N}$, there exists $d,\ell \in \mathbb{N}$ such that
		for every two structures $\mathfrak{A}, \mathfrak{B}$, $\vec{a} \in A^m$ and $\vec{b} \in B^m$,
		
		$$\mathfrak{A} \equiv_{\ell}^{\mathfrak{F}} \mathfrak{B} \textrm{ and } N_d^{\mathfrak{A}}(\vec{a}) \equiv_\ell^{\mathfrak{F}} N_d^{\mathfrak{B}}(\vec{b}) \Rightarrow (\mathfrak{A},\vec{a}) \equiv_{k}^{\mathfrak{F}} (\mathfrak{B},\vec{b}).$$
		
		\item \emph{weakly-local} if for every $k,m \in \mathbb{N}$, there exist $d,\ell \in \mathbb{N}$ such
		that for every structure $\mathfrak{A}$, $\vec{a}$ and $\vec{b} \in A^m$, 
		
		$$N_d^{\mathfrak{A}}(\vec{a}) \equiv_\ell^{\mathfrak{F}} N_d^{\mathfrak{B}}(\vec{b}) \textrm{ and } B_d^{\mathfrak{A}}(\vec{a}) \cap B_d^{\mathfrak{B}}(\bar{b}) = \emptyset \Rightarrow (\mathfrak{A},\vec{a}) \equiv_{k}^{\mathfrak{F}} (\mathfrak{B},\vec{b}).$$
	\end{itemize}
		
\end{dfn}

Although quite promising as well as easy to apply, the game-based framework (used to define locality under logical equivalence) has the following problem: if a logic $\mathcal{L}$ is local (Hanf-, or Gaifman-, or weakly)
under isomorphisms, and $\mathcal{L}'$ is a sub-logic of $\mathcal{L}$, then $\mathcal{L}'$ is local as well. The same, however, is not true for game-based locality: properties of games guaranteeing locality need not be preserved if one passes to weaker games [1].

The question that immediately arises is: is it possible to define the notion of locality under logical equivalence without resorting to game-based frameworks? The purpose of this paper is to provide a partial answer to this question. As we will see, at least for positive primitive-sentences, the answer is yes. 


{\bf Overview of the paper:} In {\bf Section 2} we present the results of model categories that will be used.  {\bf Section 3} we present the results about cores that will be needed. {\bf Section 4} we define the category $\mathbf{STRUCT}[\sigma_n]_{(d,0)}^{\widetilde{\mathcal{T}(\sigma_n)}}$ and the apparatus necessary to present the main result. {\bf Section 5} is devoted to the main contributions of the present work (derivated from [18]): In stating my main result -- Theorem 3 below --. I switch from talking about formulas to sentences (i.e., formulas without free variables)\footnote{This is merely a matter of convenience; Theorem 3 remains valid when stated more generally for formulas instead of sentences.}.

\section{Quillen Model Categories}

In this section we introduce the concept of a Quillen model category. 

\begin{dfn}

Given a commutative square diagram of the following form

\begin{equation}
\xymatrix{& A \ar[d]_i \ar[r]^{f} & X \ar[d]^p \\ & B  \ar[r]_{g} & Y,}
\end{equation}

\end{dfn}

\noindent a \emph{lift} or \emph{lifting} in the diagram is a map $h:B \rightarrow X$ such that the resulting diagram with  five arrows commutes, i.e., such that $h \circ i=f$ and $p \circ h = g$.

\begin{dfn}
A model category is a category $\mathcal{C}$ with three distinguished classes of maps:

\begin{enumerate}

\item weak equivalences ($\xrightarrow{\sim}$);
\item fibrations ($\twoheadrightarrow$); and
\item cofibrations ($\hookrightarrow$).

\end{enumerate}

\noindent each of which is closed under composition and contains all identity maps. A map which  is  both  a fibration (resp. cofibration) and  a  weak equivalence is  called an \emph{acyclic fibration} (resp. \emph{acyclic cofibration}). We require the following axioms:

\noindent \emph{\textbf{MC1}} Finite limits and colimits exist in $\mathcal{C}$;

\noindent \emph{\textbf{MC2}} If $f$ and $g$ are maps in $\mathcal{C}$ such that $g \circ f$ is defined and if two of the three maps $f,g,g \circ f$ are weak equivalences, then so is the third.

\noindent \emph{\textbf{MC3}} If $f$ is a retract of $g$ \emph{(see [3] 2.6)} and $g$ is a  fibration, cofibration, or a weak equivalence, then so is $f$.

\noindent \emph{\textbf{MC4}} Given a commutative diagram of the form (1), a lift exists in the diagram in either of the following two situations: (i) $i$ is a cofibration and $p$ is an acyclic fibration, or (ii) $i$ is an acyclic cofibration and $p$ is a fibration.

\noindent \emph{\textbf{MC5}} Any map $f$ can be factored in two ways: (i) $f = p \circ i$, where $i$ is a cofibration and $p$ is an acyclic fibration, and (ii) $f = p \circ i$, where $i$ is an acyclic cofibration and $p$ is a fibration.

\end{dfn}

\noindent By \textbf{MC1} and ([3], 2.25), a model category $\mathcal{C}$ has both an initial object $\emptyset$ and a terminal  object $*$. An object $A \in \mathcal{C}$ is said to be \emph{cofibrant} if $\emptyset \rightarrow A$ is a cofibration and \emph{fibrant} if $A \rightarrow *$ is a fibration. 

\begin{dfn}[Lifting Properties]
A map $i:A \rightarrow B$ is said to have the left lifting property \emph{(LLP)} with respect to another map $p:X \rightarrow Y$ and $p$ is said to have the right lifting property \emph{(RLP)} with respect to $i$ if a lift exists in any diagram of the form \emph{(1)}.
\end{dfn}

\begin{pps}
Let $\mathcal{C}$ be a model category.

\begin{enumerate}
\item The  cofibrations in $\mathcal{C}$ are  the  maps  which  have  the  LLP  with  respect to acyclic  fibrations.
\item The acyclic cofibrations in $\mathcal{C}$ are the maps which have the LLP with respect to cofibrations.
\item The fibrations in $\mathcal{C}$ are the maps which have the RLP with respect to acyclic cofibrations.
\item The  acyclic fibrations  in $\mathcal{C}$ are  the  maps  which  have  the  RLP with respect to cofibrations.
\end{enumerate}
\end{pps}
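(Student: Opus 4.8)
The plan is to derive all four characterizations from the \emph{retract argument}, using only the ``trivial'' halves of \textbf{MC4}, the factorization axiom \textbf{MC5}, and the retract axiom \textbf{MC3}. First I would isolate the sublemma that is the heart of the matter: if a map $f$ admits a factorization $f = p \circ i$ and $f$ has the LLP with respect to $p$, then $f$ is a retract of $i$; dually, if $f = p \circ i$ and $f$ has the RLP with respect to $i$, then $f$ is a retract of $p$. To prove the first assertion, write $f : A \to B$, $i : A \to C$, $p : C \to B$; applying the LLP of $f$ against $p$ to the square with top edge $i$, left edge $f$, right edge $p$, and bottom edge $\mathrm{id}_B$ yields $h : B \to C$ with $h \circ f = i$ and $p \circ h = \mathrm{id}_B$, whence
\[\xymatrix{A \ar[r]^{\mathrm{id}} \ar[d]_f & A \ar[r]^{\mathrm{id}} \ar[d]_i & A \ar[d]^f \\ B \ar[r]_{h} & C \ar[r]_{p} & B}\]
commutes and has identity horizontal composites, so $f$ is a retract of $i$. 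The dual assertion follows by reversing every arrow.

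With the sublemma available, the ``only if'' directions are immediate: \textbf{MC4}(i) says cofibrations have the LLP against acyclic fibrations and acyclic fibrations have the RLP against cofibrations, while \textbf{MC4}(ii) says acyclic cofibrations have the LLP against fibrations and fibrations have the RLP against acyclic cofibrations. For the ``if'' direction of item 1, suppose $f$ has the LLP against every acyclic fibration; factor $f = p \circ i$ by \textbf{MC5}(i) with $i$ a cofibration and $p$ an acyclic fibration, so that the sublemma exhibits $f$ as a retract of the cofibration $i$, and \textbf{MC3} makes $f$ a cofibration. Item 2 is handled identically, using the factorization of \textbf{MC5}(ii) into an acyclic cofibration followed by a fibration, together with the LLP against fibrations. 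Items 3 and 4 are the duals: given $p$ with the RLP against every acyclic cofibration (resp. every cofibration), factor $p = q \circ j$ by \textbf{MC5}(ii) (resp. \textbf{MC5}(i)) with $j$ an acyclic cofibration (resp. cofibration) and $q$ a fibration (resp. acyclic fibration), then apply the dual sublemma and \textbf{MC3}.

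The argument is essentially an exercise in bookkeeping, and I do not expect any genuine obstacle. The one place that repays careful writing — and the step I would present in full — is the construction of the retract diagram in the sublemma, together with keeping straight, across the four cases, which clause of \textbf{MC5} is used and in which variance (LLP versus RLP, left factor versus right factor of the factorization).
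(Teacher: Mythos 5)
Your retract-argument proof is correct and coincides with the paper's proof in substance: the paper merely cites Dwyer--Spalinski ([3], p.~87), and the argument given there is exactly yours --- \textbf{MC4} for the easy inclusions, then \textbf{MC5} plus the retract sublemma plus \textbf{MC3} for the converses, with the same bookkeeping of which factorization and which variance is used in each of the four cases. The only discrepancy is that item~2 as printed in the paper reads ``LLP with respect to cofibrations,'' which is a typo for ``fibrations'' (the version in [3]), and it is this corrected statement that your argument proves.
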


\begin{proof}
([3], p.87).
\end{proof}

\begin{pps}
Let $\mathcal{C}$ be a model category.
\begin{enumerate}
\item The class of cofibrations in $\mathcal{C}$ is stable under cobase change \emph{(see [3], 2.16)}. 
\item The class of acyclic cofibrations in $\mathcal{C}$ is stable under cobase change.
\item The class of  fibrations in $\mathcal{C}$ is stable under base change \emph{(see [3], 2.23)}.
\item The class of acyclic fibrations in $\mathcal{C}$ is stable under base change.
\end{enumerate}
\end{pps}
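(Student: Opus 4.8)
The plan is to deduce all four statements from Proposition 1 together with a single diagrammatic observation: any class of maps defined by a left lifting property is stable under cobase change, and any class defined by a right lifting property is stable under base change. Since Proposition 1 characterizes cofibrations, acyclic cofibrations, fibrations and acyclic fibrations in exactly this way (respectively, by the LLP against acyclic fibrations, the LLP against fibrations, the RLP against acyclic cofibrations, and the RLP against cofibrations), the four claims follow immediately. Existence of the pushouts and pullbacks implicit in the notions of cobase and base change is guaranteed by \textbf{MC1}.

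For the cobase-change half, suppose $i:A\to B$ lies in a class $\mathcal{P}$ defined by the LLP against a class $\mathcal{Q}$, and let $j:C\to D$ be a cobase change of $i$, so that $D$ is the pushout of $B\xleftarrow{\,i\,}A\xrightarrow{\,f\,}C$ with structure maps $g:B\to D$ and $j:C\to D$. Given any $p:X\to Y$ in $\mathcal{Q}$ and a commutative square with $j$ on the left and $p$ on the right, I would paste the pushout square onto the left to obtain a commutative square with $i$ on the left and $p$ on the right, apply the LLP of $i$ against $p$ to get a map $k:B\to X$, and then use the universal property of the pushout --- the map $k$ and the top map $C\to X$ agree after restriction to $A$ --- to produce $h:D\to X$. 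One triangle for $h$ (the one over $C$) holds by construction; commutativity of $p\circ h$ with the bottom map into $Y$ follows from the uniqueness clause of the pushout's universal property, since both $p\circ h$ and the bottom map restrict compatibly along $g$ and $j$. Hence $j\in\mathcal{P}$. The base-change half is formally dual: with $q:X'\to Y'$ a base change of a map $p$ lying in a class defined by the RLP against $\mathcal{Q}$, one pastes the pullback square onto the right of a given lifting problem for $q$, solves the resulting problem for $p$, and transports the solution back along the pullback's universal property, again invoking its uniqueness clause for the second triangle.

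Applying this to Proposition 1 then yields: (1) cofibrations have the LLP against acyclic fibrations, hence are closed under cobase change; (2) acyclic cofibrations have the LLP against fibrations, same conclusion; (3) fibrations have the RLP against acyclic cofibrations, hence are closed under base change; (4) acyclic fibrations have the RLP against cofibrations, same conclusion. I expect the only step that requires genuine care --- rather than routine diagram bookkeeping --- to be the verification that the transported lift makes \emph{both} triangles commute, i.e. the appeal to the uniqueness part of the universal property of the pushout (respectively pullback) to obtain the triangle that does not come for free from the lifting axiom \textbf{MC4}.
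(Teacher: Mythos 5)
Your proof is correct and is essentially the argument the paper itself defers to: the proof of this proposition in the cited reference [3] (Dwyer--Spalinski) likewise derives stability under cobase and base change from the lifting characterizations of Proposition 1 together with the universal properties of pushouts and pullbacks supplied by \textbf{MC1}, with the only delicate point being exactly the one you isolate, namely using the uniqueness clause of the universal property to get the second commuting triangle. Note in passing that you have (rightly) read Proposition 1(2) as the LLP against \emph{fibrations}; the paper's transcription says ``cofibrations'', which is a typo, and your version is the one needed for claim (2).
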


\begin{proof}
([3], p. 88).
\end{proof}

\subsection{Homotopy Relations on Maps}

\subsubsection{Cylinder Objects and Left Homotopy}

In this subsubsection $\mathcal{C}$ is some fixed model category, and $A$ and $X$ are objects of $\mathcal{C}$.

\begin{dfn}[Cylinder objects]
A cylinder object for $A$ is an object $A \wedge I$ of $\mathcal{C}$ together with a diagram \emph{(\textbf{MC1}, [3], 2.15)}:

$$A \coprod A \xrightarrow{i} A \wedge I \xrightarrow{\sim} A$$

\noindent which factors the folding  map $\mathrm{id}_A + \mathrm{id}_A: A \coprod A \rightarrow A$ \emph{(see [3], 2.15)}. A cylinder object $A \wedge I$ is called

\begin{enumerate}
\item a \emph{good cylinder object}, if $A \coprod A \rightarrow A \wedge I$ is a cofibration; and
\item a \emph{very good cylinder object}, if in addition the map $A \wedge I \rightarrow A$ is a (necessarily acyclic) fibration.
\end{enumerate}

\end{dfn}

\noindent If $A \wedge I$ is a cylinder object for $A$, we will denote the two structure maps $A \rightarrow A \wedge I$ by $i_0 = i \cdot \mathrm{in}_0$ and $i_1 = i \cdot \mathrm{in}_1$ (cf. [3], 2.15).

\begin{lema}
If $A$ is cofibrant and $A \wedge I$ is a good cylinder object for $A$, then  the maps $i_0,i_1:A \rightarrow A \wedge I$ are acyclic cofibrations.
\end{lema}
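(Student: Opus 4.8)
The plan is to prove that each structure map $i_j : A \to A \wedge I$ (for $j = 0, 1$) is simultaneously a weak equivalence and a cofibration; by the evident symmetry between the two coproduct inclusions it suffices to argue for $i_0$ and then repeat verbatim for $i_1$.

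\emph{Weak equivalence part.} Here I would simply invoke \textbf{MC2}. By definition of a cylinder object the diagram $A \coprod A \xrightarrow{i} A \wedge I \xrightarrow{\sigma} A$ factors the folding map, so $\sigma \circ i = \mathrm{id}_A + \mathrm{id}_A$ and $\sigma : A \wedge I \to A$ is a weak equivalence. Precomposing with the coproduct inclusion $\mathrm{in}_0 : A \to A \coprod A$ gives $\sigma \circ i_0 = \mathrm{id}_A$. Since $\mathrm{id}_A$ and $\sigma$ are both weak equivalences, \textbf{MC2} (two-out-of-three) forces $i_0$ to be a weak equivalence. Note this step does not use cofibrancy of $A$.

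\emph{Cofibration part.} This is the only place where cofibrancy of $A$ is needed, and it is the crux of the argument. The coproduct $A \coprod A$ is the pushout of the span $A \leftarrow \emptyset \to A$, so the inclusion $\mathrm{in}_0 : A \to A \coprod A$ is obtained from $\emptyset \to A$ by cobase change. As $A$ is cofibrant, $\emptyset \to A$ is a cofibration, hence by Proposition 2(1) (stability of cofibrations under cobase change) the map $\mathrm{in}_0$ is a cofibration. Now $i_0 = i \circ \mathrm{in}_0$ is a composite of the cofibration $\mathrm{in}_0$ with the map $i : A \coprod A \to A \wedge I$, which is a cofibration precisely because $A \wedge I$ is assumed to be a \emph{good} cylinder object; since cofibrations are closed under composition, $i_0$ is a cofibration.

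Putting the two parts together, $i_0$ is an acyclic cofibration, and the identical argument with $\mathrm{in}_1$ in place of $\mathrm{in}_0$ handles $i_1$. The main (indeed only) obstacle is the observation that the coproduct inclusions are cofibrations when $A$ is cofibrant — recognizing $\mathrm{in}_0$ as a cobase change of $\emptyset \to A$ — after which everything reduces to routine appeals to \textbf{MC2} and closure of cofibrations under composition.
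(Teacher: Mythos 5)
Your argument is correct and is essentially the proof the paper defers to in [3, pp. 89--90]: weak equivalence of $i_0,i_1$ via \textbf{MC2} applied to $\sigma \circ i_j = \mathrm{id}_A$, and the cofibration property by recognizing $\mathrm{in}_0,\mathrm{in}_1 : A \to A \coprod A$ as cobase changes of $\emptyset \to A$ (using cofibrancy of $A$ and Proposition 2) composed with the cofibration $i$ from goodness of the cylinder. Nothing is missing.
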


\begin{proof}
 ([3], pp. 89-90).
\end{proof}

\begin{dfn}
Two maps $f,g:A \rightarrow X$ in $\mathcal{C}$ are said to be left  homotopic (written $f \sim_l g$) if there exists a cylinder object $A \wedge I$ for $A$ such that the sum map $f+g:A \coprod A \rightarrow X$ \emph{(see [3], 2.15)} extends to a map $H:A \wedge I \rightarrow X$, i.e. such that there exists a map $H:A \wedge I \rightarrow X$ with $H(i_0+i_1)=f+g$. Such a map $H$ is said to be a \emph{left homotopy} from $f$ to $g$ (via the cylinder object $A \wedge I$). The left homotopy is said to be \emph{good} (resp. \emph{very good}) if $A \wedge I$ is a good (resp. very good) cylinder object for $A$.
\end{dfn}

\begin{lema}
If $f \sim_l g:A \rightarrow X$, then there  exists a  good left  homotopy  from $f$ to $g$. If in addition $X$ is fibrant, then there exists a very good left homotopy from $f$ to $g$.
\end{lema}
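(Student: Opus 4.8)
The plan is to take an arbitrary cylinder object witnessing $f \sim_l g$ and successively upgrade it, first to a good cylinder object and then, under the fibrancy hypothesis, to a very good one, transporting the homotopy along each replacement by means of the factorization axiom \textbf{MC5} and the lifting axiom \textbf{MC4}. So I would start from a left homotopy $H : A \wedge I \to X$ through some cylinder object, with structure map $j_0 : A \coprod A \to A \wedge I$ and weak equivalence $\sigma : A \wedge I \xrightarrow{\sim} A$ factoring the folding map. Applying \textbf{MC5}(i) to $j_0$, factor it as $A \coprod A \xrightarrow{j} C \xrightarrow{q} A \wedge I$ with $j$ a cofibration and $q$ an acyclic fibration. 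Then $C$ is a good cylinder object for $A$: by \textbf{MC2} the composite $\sigma \circ q : C \to A$ is a weak equivalence (both $q$ and $\sigma$ are), it satisfies $(\sigma \circ q) \circ j = \sigma \circ j_0 = \mathrm{id}_A + \mathrm{id}_A$, and $j$ is a cofibration by construction. With the structure maps of $C$ given by $i_0 = j \cdot \mathrm{in}_0$ and $i_1 = j \cdot \mathrm{in}_1$, the composite $H \circ q : C \to X$ satisfies $(H\circ q)\circ(i_0 + i_1) = H \circ q \circ j = H \circ j_0 = f+g$, so it is a good left homotopy from $f$ to $g$.

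For the second assertion, assume $X$ is fibrant and take the good cylinder object $C$ just built, with structure weak equivalence $p : C \xrightarrow{\sim} A$. Apply \textbf{MC5}(ii) to $p$: factor it as $C \xrightarrow{s} D \xrightarrow{r} A$ with $s$ an acyclic cofibration and $r$ a fibration. Since $p = r \circ s$ and $s$ are weak equivalences, $r$ is a weak equivalence by \textbf{MC2}, hence an acyclic fibration; in particular $r$ is a fibration, so $D$ will be a very good cylinder object as soon as $A \coprod A \to D$ is still a cofibration — and it is, being the composite $s \circ j$ of two cofibrations, with $r \circ (s\circ j) = p \circ j = \mathrm{id}_A + \mathrm{id}_A$. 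It remains to transport the homotopy: I need a map $D \to X$ extending $H \circ q : C \to X$ along $s$. Since $s$ is an acyclic cofibration and $X \to *$ is a fibration (fibrancy of $X$), \textbf{MC4}(ii) applied to the square with top edge $H\circ q$ and right edge $X \to *$ yields a lift $H' : D \to X$ with $H' \circ s = H \circ q$. Writing the structure maps of $D$ as $(s\circ j)\cdot\mathrm{in}_0$ and $(s\circ j)\cdot\mathrm{in}_1$, we get $H'\circ(s \circ j) = H \circ q \circ j = f+g$, so $H'$ is a very good left homotopy from $f$ to $g$.

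The only genuine subtlety — the part to carry out carefully rather than by routine — is the bookkeeping: checking at each stage that the new object really meets the definition of a (good, then very good) cylinder object, i.e. that the composite to $A$ is a weak equivalence factoring the folding map through the new inclusion, which is where \textbf{MC2} is invoked; and isolating precisely where fibrancy of $X$ is used, namely only in the final step to license the lift against $X \to *$ via \textbf{MC4}(ii). Everything else amounts to chasing the structure maps $i_0, i_1$ through the two factorizations.
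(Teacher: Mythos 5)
Your argument is correct and is exactly the standard one the paper delegates to its reference ([3], Dwyer--Spalinski, p.~90): factor $A \coprod A \to A \wedge I$ via \textbf{MC5}(i) to get a good cylinder object and compose the homotopy with the acyclic fibration, then, when $X$ is fibrant, factor the weak equivalence to $A$ via \textbf{MC5}(ii) and extend the homotopy along the acyclic cofibration using \textbf{MC4}(ii) against $X \to *$. The bookkeeping of the structure maps and the uses of \textbf{MC2} are all in order, so there is nothing to add.
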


\begin{proof}
 ([3], p. 90).
\end{proof}

\begin{lema}
If $A$ is cofibrant, then $\sim_l$ is an equivalence relation on $\mathrm{Hom}_{\mathcal{C}}(A,X)$.
\end{lema}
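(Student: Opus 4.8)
The statement to prove is Lemma: if $A$ is cofibrant, then $\sim_l$ is an equivalence relation on $\mathrm{Hom}_{\mathcal{C}}(A,X)$. The plan is to verify reflexivity, symmetry, and transitivity in turn, exploiting Lemma 2 (existence of good left homotopies) to normalize the cylinder objects that appear, and Lemma 1 (for $A$ cofibrant and a good cylinder object, the structure maps $i_0, i_1$ are acyclic cofibrations). For reflexivity, given $f : A \to X$, I would take the trivial cylinder object $A \wedge I = A$ with $i_0 = i_1 = \mathrm{id}_A$; then $H = f$ is a left homotopy from $f$ to $f$. (Alternatively one can note any cylinder object works with $H = f \circ (\text{the weak equivalence } A \wedge I \to A)$.) For symmetry, if $H : A \wedge I \to X$ is a left homotopy from $f$ to $g$ via the cylinder object $A \coprod A \xrightarrow{i} A \wedge I \xrightarrow{\sim} A$, then precomposing with the swap automorphism of $A \coprod A$ exhibits $A \wedge I$ (with the two inclusions interchanged) as a cylinder object realizing $H$ as a homotopy from $g$ to $f$; so $g \sim_l f$.

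The substantive case is transitivity. Suppose $f \sim_l g$ and $g \sim_l h$. By Lemma 2 I may assume both homotopies are \emph{good}: let $H : A \wedge I \to X$ witness $f \sim_l g$ via a good cylinder object $A \wedge I$ with inclusions $i_0, i_1$, and let $H' : A \wedge I' \to X$ witness $g \sim_l h$ via a good cylinder object $A \wedge I'$ with inclusions $i_0', i_1'$. The idea is to glue the two cylinders along the copy of $A$ where both restrict to $g$: form the pushout $A \wedge I'' := (A \wedge I) \coprod_A (A \wedge I')$, where the maps $A \to A \wedge I$ and $A \to A \wedge I'$ used in the pushout are $i_1$ and $i_0'$ respectively. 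This pushout exists by \textbf{MC1}. Then $H$ and $H'$ agree on $A$ (both send it to $g$), so they induce a map $H'' : A \wedge I'' \to X$ with $H'' $ restricting to $f$ on the "outer" end $i_0$ and to $h$ on the "outer" end $i_1'$. It remains to check that $A \wedge I''$, equipped with the inclusion from $A \coprod A$ given by $(i_0, i_1')$ and an appropriate weak equivalence $A \wedge I'' \to A$, is genuinely a cylinder object for $A$.

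The main obstacle, and the place where cofibrancy of $A$ is essential, is exactly this last verification — specifically that the composite $A \coprod A \to A \wedge I''$ is still admissible and that the map down to $A$ is a weak equivalence. For the map down to $A$: the folding maps $A \wedge I \to A$ and $A \wedge I' \to A$ agree on the glued copy of $A$ (both are the identity there), so they induce $q : A \wedge I'' \to A$; to see $q$ is a weak equivalence I would use that $i_1 : A \to A \wedge I$ is an acyclic cofibration (Lemma 1, since $A$ is cofibrant and the cylinder is good), hence its cobase change $A \wedge I' \to A \wedge I''$ is an acyclic cofibration by Proposition 2(2); composing with the weak equivalence $A \wedge I' \xrightarrow{\sim} A$ — wait, that composite goes the wrong way, so instead I argue that $A \wedge I' \to A \wedge I''$ is an acyclic cofibration and that the weak equivalence $A \wedge I \xrightarrow{\sim} A$ together with \textbf{MC2} (two-out-of-three) forces $q$ to be a weak equivalence once we know $A \wedge I \to A \wedge I''$ is a weak equivalence; the latter holds because $i_0' : A \to A \wedge I'$ is an acyclic cofibration, so its cobase change $A \wedge I \to A \wedge I''$ is an acyclic cofibration, in particular a weak equivalence. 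Then $q$ composed with that acyclic cofibration equals the weak equivalence $A \wedge I \to A$, so by two-out-of-three $q$ is a weak equivalence. Finally, $A \coprod A \to A \wedge I''$ factors the folding map (by construction $q$ composed with it is $\mathrm{id}_A + \mathrm{id}_A$), so $A \wedge I''$ is a cylinder object and $H''$ is a left homotopy from $f$ to $h$, giving $f \sim_l h$. This completes the three verifications, and hence the proof.
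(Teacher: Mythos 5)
Your proof is correct and is essentially the standard argument the paper defers to (Dwyer--Spalinski, [3], p.~91): reflexivity and symmetry directly, and transitivity by gluing two good cylinders along the middle copy of $A$, using that $i_1$ and $i_0'$ are acyclic cofibrations (Lemma 1, where cofibrancy of $A$ enters), stability under cobase change, and \textbf{MC2} to see the glued object is again a cylinder. No gaps; the self-correction about the direction of the map to $A$ resolves itself properly via two-out-of-three.
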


\begin{proof}
 ([3], p. 91).
\end{proof}

Let $\pi^l(A,X)$ denote the set of equivalence classes of $\mathrm{Hom}_{\mathcal{C}}(A,X)$ under the equivalence relation generated by left homotopy.

\begin{lema}
If $A$ is cofibrant and $p:Y \rightarrow X$ is an acyclic fibration, then composition with $p$ induces a bijection:
$$p_*:\pi^l(A,Y) \rightarrow \pi^l(A,X), \textrm{ } [f] \mapsto [p \circ f].$$
\end{lema}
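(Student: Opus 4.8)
The plan is to establish the three ingredients of a bijection — that $p_*$ is well defined, that it is surjective, and that it is injective — each being a direct application of the lifting axiom \textbf{MC4}(i) together with the hypotheses that $A$ is cofibrant and that $p$ is an acyclic fibration. First I would check well-definedness: if $f \sim_l g : A \rightarrow Y$ via a left homotopy $H : A \wedge I \rightarrow Y$, then $p \circ H : A \wedge I \rightarrow X$ satisfies $(p\circ H)(i_0 + i_1) = p\circ(f+g) = (p\circ f)+(p\circ g)$, so it is a left homotopy from $p\circ f$ to $p\circ g$. Since $A$ is cofibrant, $\sim_l$ is already an equivalence relation on each $\mathrm{Hom}_{\mathcal{C}}(A,-)$ by the lemma above, so $\pi^l(A,Y)$ and $\pi^l(A,X)$ are just the sets of $\sim_l$-classes, and $p_*[f] := [p\circ f]$ is well defined.

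For surjectivity, let $g : A \rightarrow X$ represent a class in $\pi^l(A,X)$. I would form the commutative square of type (1) with top arrow $\emptyset \rightarrow Y$, bottom arrow $g$, left-hand map $\emptyset \rightarrow A$ (a cofibration, as $A$ is cofibrant), and right-hand map $p$ (an acyclic fibration). By \textbf{MC4}(i) there is a lift $f : A \rightarrow Y$ with $p\circ f = g$, whence $p_*[f] = [g]$.

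For injectivity, suppose $f_0, f_1 : A \rightarrow Y$ satisfy $p\circ f_0 \sim_l p\circ f_1$. By the lemma asserting the existence of good left homotopies, there is a \emph{good} left homotopy $H : A \wedge I \rightarrow X$ from $p\circ f_0$ to $p\circ f_1$, so that the cylinder inclusion $i : A \coprod A \rightarrow A \wedge I$ is a cofibration. I would then consider the square with top arrow $f_0 + f_1 : A \coprod A \rightarrow Y$, bottom arrow $H$, left-hand map $i$, and right-hand map $p$; it commutes because $p\circ(f_0+f_1) = (p\circ f_0)+(p\circ f_1) = H\circ(i_0+i_1) = H\circ i$. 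Since $i$ is a cofibration and $p$ an acyclic fibration, \textbf{MC4}(i) yields a lift $H' : A \wedge I \rightarrow Y$ with $H'\circ i = f_0 + f_1$ (equivalently $H'\circ i_0 = f_0$ and $H'\circ i_1 = f_1$) and $p\circ H' = H$. Thus $H'$ is a left homotopy from $f_0$ to $f_1$, giving $[f_0] = [f_1]$, so $p_*$ is injective.

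The well-definedness and surjectivity steps are essentially formal; the step that requires a little care is injectivity, where the key move is to replace the given homotopy downstairs by a \emph{good} one, so that the cylinder inclusion $A \coprod A \rightarrow A \wedge I$ becomes a cofibration against which the acyclic fibration $p$ admits a lift. I expect this to be the main (though still mild) obstacle, since it is the only place where one must upgrade the ambient data before invoking \textbf{MC4}.
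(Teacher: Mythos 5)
Your proof is correct and follows essentially the same route as the argument the paper relies on (the paper simply cites Dwyer--Spalinski, whose proof is exactly this: well-definedness by composing homotopies with $p$, surjectivity by lifting against $\emptyset \rightarrow A$, and injectivity by upgrading to a good left homotopy and lifting it through the acyclic fibration via \textbf{MC4}(i)). Your observation that cofibrancy of $A$ makes $\sim_l$ an equivalence relation, so that classes are genuine $\sim_l$-classes, correctly handles the only subtle point.
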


\begin{proof}
 ([3], pp. 91-92).
\end{proof}

\begin{lema}
Suppose that $X$ is fibrant,  that $f$ and $g$ are left homotopic maps $A \rightarrow X$, and that $h:A' \rightarrow A$ is a map. Then $f \circ h \sim_l g \circ h$.
\end{lema}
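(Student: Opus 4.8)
The plan is to transport a homotopy $H$ witnessing $f\sim_l g$ along $h$, by building a cylinder object for $A'$ that maps compatibly into a cylinder object for $A$. First I would use the hypothesis that $X$ is fibrant: by Lemma 2 there is a \emph{very good} left homotopy $H:A\wedge I\to X$ from $f$ to $g$, so the cylinder object sits in a factorization $A\coprod A\xrightarrow{i}A\wedge I\xrightarrow{q}A$ of $\mathrm{id}_A+\mathrm{id}_A$ in which $i$ is a cofibration, $q$ is an \emph{acyclic fibration}, and $H\circ i=f+g$ (with $i=i_0+i_1$).

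Next I would manufacture a cylinder object for $A'$. Applying \textbf{MC5}(i) to the folding map $\mathrm{id}_{A'}+\mathrm{id}_{A'}:A'\coprod A'\to A'$ produces a factorization $A'\coprod A'\xrightarrow{j}A'\wedge I'\xrightarrow{r}A'$ with $j$ a cofibration and $r$ an acyclic fibration; hence $A'\wedge I'$ is a (very good) cylinder object for $A'$ with structure map $j=i_0'+i_1'$. The map $h$ induces $h\coprod h:A'\coprod A'\to A\coprod A$, and I would then consider the square with left edge $j$, right edge $q$, top edge $i\circ(h\coprod h):A'\coprod A'\to A\wedge I$, and bottom edge $h\circ r:A'\wedge I'\to A$. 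This square commutes because $q\circ i\circ(h\coprod h)=(\mathrm{id}_A+\mathrm{id}_A)\circ(h\coprod h)=h\circ(\mathrm{id}_{A'}+\mathrm{id}_{A'})=h\circ r\circ j$. Since $j$ is a cofibration and $q$ is an acyclic fibration, \textbf{MC4}(i) supplies a lift $k:A'\wedge I'\to A\wedge I$ with $k\circ j=i\circ(h\coprod h)$.

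Finally I would set $H':=H\circ k:A'\wedge I'\to X$ and check that it is the desired homotopy: $H'\circ j=H\circ i\circ(h\coprod h)=(f+g)\circ(h\coprod h)=(f\circ h)+(g\circ h)$, so $H'$ is a left homotopy from $f\circ h$ to $g\circ h$ via the cylinder object $A'\wedge I'$, which gives $f\circ h\sim_l g\circ h$.

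The main obstacle — and the only place where fibrancy of $X$ is used — is the opening step: one must first replace the given left homotopy by a very good one (Lemma 2), so that $q:A\wedge I\to A$ is an acyclic fibration; otherwise the lifting square above has no reason to admit a filler and the argument stalls. Everything after that is routine manipulation of the universal property of the coproduct and of the two structure maps of each cylinder object, so I do not anticipate further difficulty; note in particular that no cofibrancy hypothesis on $A$ or $A'$ is required.
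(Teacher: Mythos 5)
Your argument is correct and coincides with the paper's own proof, which is simply the citation of Dwyer--Spalinski ([3], p.\ 92, Lemma 4.9): use fibrancy of $X$ to replace the homotopy by a very good one, take a good cylinder object for $A'$, lift against the acyclic fibration $A\wedge I\to A$ via \textbf{MC4}(i), and compose the lift with $H$. Your commutativity check $q\circ i\circ(h\coprod h)=h+h=h\circ r\circ j$ and the final identification $(f+g)\circ(h\coprod h)=(f\circ h)+(g\circ h)$ are exactly the steps of that standard argument, so nothing is missing.
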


\begin{proof}
 ([3], p. 92).
\end{proof}

\begin{lema}
If $X$ is fibrant, then the composition in $\mathcal{C}$ induces a map:
$$\pi^l(A',A) \times \pi^l(A,X) \rightarrow \pi^l(A',X), \textrm{ } ([h],[f]) \mapsto [f \circ h].$$
\end{lema}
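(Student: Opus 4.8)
The plan is to prove that the rule $([h],[f])\mapsto[f\circ h]$ does not depend on the chosen representatives; this is exactly what is needed for it to descend to a well-defined map $\pi^l(A',A)\times\pi^l(A,X)\to\pi^l(A',X)$. Since $\sim_l$ is reflexive and symmetric, the equivalence relation it generates on each $\mathrm{Hom}$-set is simply its transitive closure, so it suffices to establish two elementary invariance statements — one in each variable — and then chain them.

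\emph{Invariance in the first coordinate.} Fix $f:A\to X$ and let $H:A'\wedge I\to A$ be a left homotopy from $h$ to $h'$, i.e.\ $H\circ(i_0+i_1)=h+h'$ for some cylinder object $A'\wedge I$ of $A'$. Then $f\circ H:A'\wedge I\to X$ satisfies $(f\circ H)\circ(i_0+i_1)=f\circ(h+h')=(f\circ h)+(f\circ h')$, since composing with $f$ commutes with the coproduct structure maps; hence $f\circ H$ is a left homotopy from $f\circ h$ to $f\circ h'$ and $[f\circ h]=[f\circ h']$ in $\pi^l(A',X)$. Note that this step uses no hypothesis on $X$: post-composition with a fixed map always preserves left homotopy.

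\emph{Invariance in the second coordinate.} Fix $h:A'\to A$ and suppose $f\sim_l g:A\to X$. Here I invoke the preceding lemma: since $X$ is fibrant, $f\sim_l g$ implies $f\circ h\sim_l g\circ h$, so $[f\circ h]=[g\circ h]$. \emph{Chaining.} Given $[h]=[h']$ in $\pi^l(A',A)$ and $[f]=[f']$ in $\pi^l(A,X)$, choose finite chains $h=h_0\sim_l h_1\sim_l\cdots\sim_l h_r=h'$ and $f=f_0\sim_l f_1\sim_l\cdots\sim_l f_s=f'$. Applying the first invariance successively along the $h_i$ with $f'$ fixed gives $[f'\circ h]=[f'\circ h']$, and applying the second invariance successively along the $f_j$ with $h$ fixed gives $[f\circ h]=[f'\circ h]$; therefore $[f\circ h]=[f'\circ h']$, which is what we wanted.

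The content is essentially formal. The one point to be careful about is that $\pi^l$ is the quotient by the \emph{generated} equivalence relation — necessary because $A'$ and $A$ are not assumed cofibrant, so $\sim_l$ need not itself be transitive — and consequently invariance must be verified link-by-link along such chains rather than for a single homotopy; this is precisely what the two elementary steps accomplish. Accordingly, I expect the main (and very mild) obstacle to be just this bookkeeping, together with keeping straight that only the second coordinate's invariance actually consumes the fibrancy of $X$.
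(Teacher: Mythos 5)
Your proof is correct and follows essentially the same route as the source the paper cites for this lemma ([3], p.~92): post-composition with a fixed map preserves left homotopy for free, pre-composition uses the preceding lemma (and hence the fibrancy of $X$), and well-definedness on $\pi^l$ follows by chaining, since the relation is only the generated equivalence relation when the sources are not assumed cofibrant. Nothing further is needed.
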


\begin{proof}
 ([3], p. 92).
\end{proof}

\subsubsection{Path Objects and Right Homotopies}


\begin{dfn}[Path objects]
A path object for $X$ is an object $X^I$ of $\mathcal{C}$ together with a diagram:

$$X \xrightarrow{\sim} X^I \xrightarrow{p} X \times X$$

\noindent which factors the diagonal map $(\mathrm{id}_X,\mathrm{id}_X): X \rightarrow X \times X$. A path object $X^I$ is called

\begin{enumerate}
\item a \emph{good path object}, if $X^I \rightarrow X \times X$ is a fibration; and
\item a \emph{very good path object}, if in addition the map $X \rightarrow X^I$ is a (necessarily acyclic) cofibration.
\end{enumerate}

\end{dfn}

\noindent By \textbf{MC5}, at least one  very good path object exists  for $X$. An object $X$ of $\mathcal{C}$ might have many path objects associated to it, denoted $X^I,X^{I'},...,$ etc. We denote the two maps $X^I \rightarrow X$ by $p_0 = pr_0 \cdot p$ and $p_1 = pr_1 \cdot p$ ([3], cf. 2.22).

\begin{lema}
If $X$ is fibrant and $X^I$ is a good  path  object for $X$, then  the maps $p_0,p_1:X^I \rightarrow X$ are acyclic fibrations.
\end{lema}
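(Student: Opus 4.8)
The statement to prove is the dual of Lemma~3 (the cylinder-object lemma): if $X$ is fibrant and $X^I$ is a good path object, then $p_0, p_1 \colon X^I \to X$ are acyclic fibrations. The plan is to exploit the formal duality between the path-object statement and the cylinder-object statement already established in Lemma~3, and otherwise to argue directly by the two-out-of-three axiom together with the stability of fibrations under base change (Proposition~3, item~3).

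First I would recall that, by definition of a good path object, we have a factorization $X \xrightarrow{\sim} X^I \xrightarrow{q} X \times X$ of the diagonal $\Delta = (\mathrm{id}_X, \mathrm{id}_X)$, where $q = (p_0, p_1)$ is a fibration. I would then observe that each projection $\mathrm{pr}_i \colon X \times X \to X$ is a fibration: since $X$ is fibrant, $X \to *$ is a fibration, and $\mathrm{pr}_0$ is the base change of $X \to *$ along $X \to *$ (the product $X \times X$ being the pullback of $X \to * \leftarrow X$), so by Proposition~3(3) it is a fibration; similarly for $\mathrm{pr}_1$. Hence $p_i = \mathrm{pr}_i \circ q$ is a composite of two fibrations, and since fibrations are closed under composition (Definition~3), each $p_i$ is a fibration.

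Next I would show $p_0$ and $p_1$ are weak equivalences. The map $X \xrightarrow{\sim} X^I$ in the factorization is by hypothesis a weak equivalence; call it $s$. By construction $p_i \circ s = \mathrm{pr}_i \circ q \circ s = \mathrm{pr}_i \circ \Delta = \mathrm{id}_X$. Now apply \textbf{MC2} to the composable pair $s, p_i$: the composite $p_i \circ s = \mathrm{id}_X$ is a weak equivalence (identities are weak equivalences by Definition~3) and $s$ is a weak equivalence, so the third map $p_i$ is a weak equivalence. Combining the two paragraphs, each $p_i$ is both a fibration and a weak equivalence, i.e., an acyclic fibration, which is exactly the claim.

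I do not expect a serious obstacle here; the only point requiring a little care is the identification of the projections $\mathrm{pr}_i \colon X \times X \to X$ as base changes of $X \to *$, which is where fibrancy of $X$ is genuinely used (exactly dual to where cofibrancy of $A$ was used in Lemma~3 via cobase change of $\emptyset \to A$). Everything else is a formal consequence of \textbf{MC2}, the closure of fibrations under composition, and Proposition~3(3). In fact, as the excerpt notes for path objects generally, the whole argument is the categorical dual of the proof of Lemma~3, and one could simply invoke that duality; but I would prefer to spell out the short direct argument above for completeness.
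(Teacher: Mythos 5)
Your argument is correct and is essentially what the paper intends: its proof is the single word ``Dual,'' i.e., dualize the cylinder-object lemma (Lemma 1, proved in [3]), and your writeup is exactly that dualization spelled out --- fibrancy of $X$ makes $\mathrm{pr}_i\colon X\times X\to X$ a base change of $X\to *$, hence a fibration, so $p_i=\mathrm{pr}_i\circ q$ is a fibration, and \textbf{MC2} applied to $p_i\circ s=\mathrm{id}_X$ gives the weak equivalence. No gaps; the only difference is that you make explicit what the paper leaves to duality.
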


\begin{proof}
Dual
\end{proof}

\begin{dfn}
Two maps $f,g:A \rightarrow X$ are said to be right homotopic (written $f \sim_r g$) if there exists a path object $X^I$ for $X$ such that the product map $(f,g):A \rightarrow X \times X$ lifts to a map $H:A \rightarrow X^I$. Such a map $H$ is said to be a right homotopy from $f$ to $g$ (via the path object $X^I$). The right homotopy is said to be good (resp.very good) if $X^I$ is a good (resp. very good) path object for $X$.
\end{dfn}

\begin{lema}
If $f \sim_r g:A \rightarrow X$, then there  exists a  good right homotopy from $f$ to $g$. If in addition $A$ is cofibrant, then there exists a very good right homotopy from $f$ to $g$.
\end{lema}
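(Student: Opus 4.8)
The statement is exactly the dual of the lemma on good and very good left homotopies proved above (where cylinder objects, the folding map, and colimit factorizations are replaced by path objects, the diagonal, and limit factorizations), so the plan is to run that argument dualized. Here is how I would carry it out. Suppose $f \sim_r g$, witnessed by some path object $X \xrightarrow{\sim} X^I \xrightarrow{p} X\times X$ together with a right homotopy $H:A\to X^I$ with $p\circ H=(f,g)$. Nothing forces $p$ to be a fibration, so $X^I$ need not be good; to repair this, apply \textbf{MC5}(ii) to $p$, factoring it as $X^I \xrightarrow{s} X^{I'} \xrightarrow{q} X\times X$ with $s$ an acyclic cofibration and $q$ a fibration.

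Next I would check that $X^{I'}$ is a good path object that still carries the homotopy. The composite $X\to X^I \xrightarrow{s} X^{I'}$ is a weak equivalence: the first map is a weak equivalence by definition of a path object, $s$ is one because it is an acyclic cofibration, and weak equivalences are closed under composition (or invoke \textbf{MC2}). Since $q\circ s=p$, post-composing this composite with $q$ gives $p\circ(X\to X^I)=(\mathrm{id}_X,\mathrm{id}_X)$, so $X \xrightarrow{\sim} X^{I'} \xrightarrow{q} X\times X$ is a path object with $q$ a fibration, i.e.\ a good path object. Putting $H':=s\circ H:A\to X^{I'}$ we get $q\circ H'=q\circ s\circ H=p\circ H=(f,g)$, so $H'$ is a good right homotopy from $f$ to $g$; this settles the first assertion.

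For the very good refinement, now assume $A$ is cofibrant. Let $j:X\to X^{I'}$ be the (weak-equivalence) structure map of the good path object just produced, and apply \textbf{MC5}(ii) to $j$, writing $j=r\circ j'$ with $j':X\to X^{I''}$ an acyclic cofibration and $r:X^{I''}\to X^{I'}$ a fibration. Since $j$ and $j'$ are both weak equivalences, \textbf{MC2} forces $r$ to be a weak equivalence, hence an acyclic fibration. Then $X \xrightarrow{j'} X^{I''} \xrightarrow{q\circ r} X\times X$ is a path object: $q\circ r$ is a fibration (fibrations compose), $(q\circ r)\circ j'=q\circ j=(\mathrm{id}_X,\mathrm{id}_X)$, and $j'$ is an (automatically acyclic) cofibration, so $X^{I''}$ is a \emph{very good} path object. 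Finally I would transport $H'$ along $r$: in the commutative square with left edge $\emptyset\to A$, right edge $r$, top edge $\emptyset\to X^{I''}$, and bottom edge $H':A\to X^{I'}$, the map $\emptyset\to A$ is a cofibration (as $A$ is cofibrant) and $r$ is an acyclic fibration, so \textbf{MC4}(i) provides a lift $H'':A\to X^{I''}$ with $r\circ H''=H'$; then $(q\circ r)\circ H''=q\circ H'=(f,g)$, so $H''$ is a very good right homotopy from $f$ to $g$.

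The only delicate point — the analogue of the main obstacle — is this last lifting step: it is precisely where cofibrancy of $A$ enters, and without that hypothesis there is no reason a good right homotopy can be upgraded to a very good one, exactly mirroring how the dual lemma required $X$ to be fibrant. Everything else is routine bookkeeping with \textbf{MC2}, \textbf{MC5}, and the closure of (acyclic) (co)fibrations under composition.
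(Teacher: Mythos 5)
Your argument is correct and is essentially the paper's own proof: the paper simply writes ``Dual'' (deferring to the left-homotopy lemma of Dwyer--Spalinski), and what you have written is precisely that dualization carried out in full, with \textbf{MC5}(ii) replacing \textbf{MC5}(i) and the cofibrancy of $A$ entering exactly where fibrancy of $X$ did in the left-handed case.
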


\begin{proof}
Dual
\end{proof}

\begin{lema}
If $X$ is fibrant, then $\sim_r$ is an equivalence relation on $\mathrm{Hom}_{\mathcal{C}}(A,X)$.
\end{lema}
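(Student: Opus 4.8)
The plan is to prove that $\sim_r$ is reflexive, symmetric, and transitive on $\mathrm{Hom}_{\mathcal{C}}(A,X)$ when $X$ is fibrant, following the standard pattern (dual to Lemma on $\sim_l$) and invoking \textbf{MC5} to produce the path objects we need. For \textbf{reflexivity}, given $f:A\to X$, pick any path object $X \xrightarrow{\sim} X^I \xrightarrow{p} X\times X$ (one exists by \textbf{MC5}); the composite $A \xrightarrow{f} X \xrightarrow{\sim} X^I$ is a right homotopy from $f$ to $f$, since postcomposing with $p$ gives $(f,f)$. For \textbf{symmetry}, if $H:A\to X^I$ witnesses $f\sim_r g$ via $X^I$, compose with the self-map of $X^I$ induced by the swap isomorphism $X\times X \to X\times X$; more precisely, one checks that $X^I$ equipped with the maps $(p_1,p_0)$ is again a path object for $X$, and $H$ viewed through it is a right homotopy from $g$ to $f$. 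Neither step uses fibrancy of $X$.

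The substantive step is \textbf{transitivity}, and this is where fibrancy of $X$ is essential. Suppose $f\sim_r g$ via a path object $X^{I}$ with homotopy $H:A\to X^{I}$ and $g\sim_r h$ via a path object $X^{I'}$ with homotopy $H':A\to X^{I'}$. By Lemma on good right homotopies (the dual of Lemma~\ref{} on good left homotopies) we may assume both path objects are good, so $p:X^{I}\to X\times X$ and $p':X^{I'}\to X\times X$ are fibrations. I would then form a new path object by gluing $X^{I}$ and $X^{I'}$ along the copy of $X$ where they agree on $g$: concretely, take the pullback $X^{I}\times_X X^{I'}$ of $p_1:X^{I}\to X$ and $p_0':X^{I'}\to X$. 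This comes with a natural map to $X\times X$ via $(p_0,p_1')$, and the diagonal $X\to X^{I}\times_X X^{I'}$ induced by $X\to X^{I}$ and $X\to X^{I'}$. One must verify that $X \to X^{I}\times_X X^{I'}$ is a weak equivalence and that $X^{I}\times_X X^{I'}\to X\times X$ is a fibration, so that this is indeed a (good) path object for $X$; here one uses that $X$ fibrant makes $p_0,p_1,p_0',p_1'$ acyclic fibrations (Lemma on acyclic fibrations from good path objects), stability of acyclic fibrations under base change, and the two-out-of-three axiom \textbf{MC2}. Finally, $H$ and $H'$ both map to the same point of $X$ under $p_1$ and $p_0'$ respectively (namely $g$), so they induce a map $A\to X^{I}\times_X X^{I'}$, and postcomposing with $(p_0,p_1')$ yields $(f,h)$, exhibiting $f\sim_r h$.

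The main obstacle is precisely the verification that the pullback $X^{I}\times_X X^{I'}$ is a path object, i.e. that the canonical map $X\to X^{I}\times_X X^{I'}$ is a weak equivalence: this is the one place the argument genuinely needs $X$ to be fibrant (so that the projections $X^{I}\to X$ are acyclic fibrations), and it is the dual of the corresponding point in the proof of the left-homotopy lemma. Everything else — existence of path objects, the good-path-object reduction, the swap argument for symmetry — is formal and dualizes the already-cited results. I would simply write ``Dual'' or ``Dual to Lemma~\ref{}'' if the paper's style (as for the preceding path-object lemmas) permits deferring to the dual of the cofibrant/left-homotopy case, since the dualization is routine once one has the pullback construction in hand.
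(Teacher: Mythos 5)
Your argument is correct and is exactly the one the paper intends: its proof is simply ``Dual,'' deferring to the dual of the left-homotopy lemma (Dwyer--Spalinski), and your reflexivity/symmetry steps plus the transitivity step via the pullback $X^{I}\times_X X^{I'}$, using that $X$ fibrant makes $p_0,p_1$ acyclic fibrations, base-change stability, and \textbf{MC2}, is precisely that dual argument. No gaps worth noting.
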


\begin{proof}
Dual
\end{proof}

Let $\pi^r(A,X)$ denote the set of equivalence classes of $\mathrm{Hom}_{\mathcal{C}}(A,X)$ under the equivalence relation generated by right homotopy.

\begin{lema}
If $X$ is fibrant and $i:A \rightarrow B$ is an acyclic cofibration, then composition with $i$ induces a bijection:
$$i^*:\pi^r(B,X) \rightarrow \pi^r(A,X).$$
\end{lema}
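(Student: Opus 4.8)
The plan is to dualize the argument for the earlier lemma asserting that $p_*:\pi^l(A,Y)\to\pi^l(A,X)$ is a bijection for $A$ cofibrant and $p$ an acyclic fibration, systematically exchanging the roles of ``cofibrant''/``fibrant'' and of ``(acyclic) fibration''/``(acyclic) cofibration'', and replacing cylinder objects by path objects. First I would verify that $i^*$ is well defined. Since $X$ is fibrant, the previously established lemma tells us that $\sim_r$ is already an equivalence relation on both $\mathrm{Hom}_{\mathcal{C}}(A,X)$ and $\mathrm{Hom}_{\mathcal{C}}(B,X)$, so $\pi^r(-,X)$ is literally the set of $\sim_r$-classes and no passage to a generated relation is needed. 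Moreover, if $H:B\to X^I$ is a right homotopy from $f$ to $g$ via a path object $X^I$, then $H\circ i:A\to X^I$ is a right homotopy from $f\circ i$ to $g\circ i$ via the same $X^I$; hence $[f]\mapsto[f\circ i]$ descends to homotopy classes.

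Surjectivity is immediate from \textbf{MC4}(ii). Given $f:A\to X$, form the square with top edge $f$, left edge $i$, right edge the map $X\to *$ (a fibration since $X$ is fibrant), and bottom edge $B\to *$; it commutes trivially, and because $i$ is an acyclic cofibration a lift $g:B\to X$ exists with $g\circ i=f$. Therefore $i^*[g]=[f]$.

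For injectivity, suppose $f_0,f_1:B\to X$ satisfy $i^*[f_0]=i^*[f_1]$, i.e.\ $f_0\circ i\sim_r f_1\circ i$. By the (dual of the) lemma on good right homotopies we may choose a \emph{good} path object $X^I$ for $X$, so that $(p_0,p_1):X^I\to X\times X$ is a fibration, together with a map $H:A\to X^I$ satisfying $p_0\circ H=f_0\circ i$ and $p_1\circ H=f_1\circ i$. Now consider the square with top edge $H$, left edge $i$, right edge $(p_0,p_1)$, and bottom edge $(f_0,f_1):B\to X\times X$; it commutes since $(p_0,p_1)\circ H=(f_0\circ i,f_1\circ i)=(f_0,f_1)\circ i$. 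As $i$ is an acyclic cofibration and $(p_0,p_1)$ is a fibration, \textbf{MC4}(ii) yields a lift $K:B\to X^I$. Then $p_0\circ K=f_0$ and $p_1\circ K=f_1$, so $K$ is a right homotopy from $f_0$ to $f_1$ and $[f_0]=[f_1]$ in $\pi^r(B,X)$.

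I do not expect any genuine obstacle: the statement is a formal dual of the $p_*$ lemma. The only points requiring a little care are (a) invoking fibrancy of $X$ at the outset so that $\sim_r$ is already transitive on both hom-sets, which is what makes $i^*$ literally the map $[f]\mapsto[f\circ i]$ on quotients; and (b) ensuring that the path object used in the injectivity step is \emph{good}, so that $(p_0,p_1)$ really is a fibration and \textbf{MC4}(ii) is applicable --- this is precisely what the dual of the good-right-homotopy lemma delivers, and note it does not require $A$ to be cofibrant (which would only be needed to upgrade ``good'' to ``very good'').
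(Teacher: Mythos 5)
Your proof is correct and is exactly what the paper intends: its proof consists of the single word ``Dual,'' i.e.\ dualize the argument for $p_*:\pi^l(A,Y)\to\pi^l(A,X)$ (cofibrant domain, acyclic fibration) by swapping cylinder/path objects and invoking \textbf{MC4}(ii), which is precisely what you carried out, including the correct observations that fibrancy of $X$ makes $\sim_r$ an equivalence relation and that only a \emph{good} path object is needed for the lifting step.
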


\begin{proof}
Dual
\end{proof}

\begin{lema}
Suppose that $A$ is cofibrant,  that $f$ and $g$ are right homotopic maps from $A$ to $X$, and that $h:X \rightarrow Y$ is a map. Then $h \circ f \sim_r h \circ g$.
\end{lema}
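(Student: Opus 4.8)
The plan is to dualise the Lemma proved above about left homotopies and precomposition, but let me first spell out the direct construction, since it is the conceptual heart of the argument.

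First I would use the hypothesis that $A$ is cofibrant, together with the Lemma guaranteeing the existence of very good right homotopies, to replace the given right homotopy from $f$ to $g$ by a very good one. This produces a very good path object $X^I$ for $X$, i.e.\ a factorisation $X \xrightarrow{\,s\,} X^I \xrightarrow{\,p\,} X\times X$ of the diagonal $(\mathrm{id}_X,\mathrm{id}_X)$ in which $s$ is an acyclic cofibration and $p$ is a fibration, together with a map $H:A \rightarrow X^I$ satisfying $p\circ H = (f,g)$. This is the only place where the cofibrancy of $A$ is used. Next I would invoke \textbf{MC5} to choose a good path object $Y^I$ for $Y$: a factorisation $Y \xrightarrow{\,t\,} Y^I \xrightarrow{\,q\,} Y\times Y$ of $(\mathrm{id}_Y,\mathrm{id}_Y)$ with $t$ a weak equivalence and $q$ a fibration.

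The key step is to produce a map $K:X^I \rightarrow Y^I$ lying over $h\times h$, that is, with $q\circ K = (h\times h)\circ p$ and $K\circ s = t\circ h$. To that end I would consider the square whose top edge is $t\circ h:X \rightarrow Y^I$, left edge $s:X \rightarrow X^I$, right edge $q:Y^I \rightarrow Y\times Y$, and bottom edge $(h\times h)\circ p:X^I \rightarrow Y\times Y$. This square commutes because both composites $X \to Y\times Y$ equal $(h,h)$: going up and right gives $q\circ t\circ h = (\mathrm{id}_Y,\mathrm{id}_Y)\circ h$, while going right and down gives $(h\times h)\circ p\circ s = (h\times h)\circ(\mathrm{id}_X,\mathrm{id}_X)$, and these agree. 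Since $s$ is an acyclic cofibration and $q$ is a fibration, \textbf{MC4}(ii) supplies the desired lift $K$.

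Finally I would set $H' := K\circ H : A \rightarrow Y^I$ and check that $q\circ H' = (h\times h)\circ p\circ H = (h\times h)\circ(f,g) = (h\circ f,\, h\circ g)$, so that $H'$ is a right homotopy from $h\circ f$ to $h\circ g$ via the path object $Y^I$; hence $h\circ f \sim_r h\circ g$. The only genuine obstacle is recognising which commutative square to feed into \textbf{MC4}; once the map $K$ between path objects is in hand the rest is a routine diagram chase. Alternatively, one may simply observe that this statement is the formal dual of the Lemma above on left homotopies and precomposition, and conclude by duality.
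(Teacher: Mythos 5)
Your proposal is correct and is essentially the paper's own proof: the paper disposes of this lemma with the single word ``Dual'' (dualising the left-homotopy statement whose proof is cited from Dwyer--Spalinski, p.~92), and your explicit construction --- very good right homotopy $H:A\rightarrow X^I$ from cofibrancy of $A$, a lift $K:X^I\rightarrow Y^I$ over $h\times h$ via \textbf{MC4}(ii), and the composite $K\circ H$ as the right homotopy from $h\circ f$ to $h\circ g$ --- is precisely that dual argument written out. No gaps; the commutativity check and the use of the acyclic cofibration $s$ against the fibration $q$ are exactly where the hypotheses are needed.
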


\begin{proof}
Dual
\end{proof}

\begin{lema}
If $A$ is cofibrant, then the composition in $\mathcal{C}$ induces a map:
$$\pi^r(A,X) \times \pi^r(X,Y) \rightarrow \pi^r(A,Y).$$
\end{lema}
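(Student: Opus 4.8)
The plan is to establish this lemma as the formal dual of the earlier left‑homotopy composition lemma (the one asserting that for $X$ fibrant composition induces $\pi^l(A',A)\times\pi^l(A,X)\to\pi^l(A',X)$), interchanging cylinder objects with path objects, "cofibrant" with "fibrant", and left with right. Concretely, what has to be checked is that the set‑level composition map $\mathrm{Hom}_{\mathcal{C}}(A,X)\times\mathrm{Hom}_{\mathcal{C}}(X,Y)\to\mathrm{Hom}_{\mathcal{C}}(A,Y)$, $(f,g)\mapsto g\circ f$, is compatible with the equivalence relations generated by right homotopy in each of its two variables; one then simply defines $[g]\circ[f]:=[g\circ f]$ and the only issue is well‑definedness.

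First I would handle the second variable, which needs no hypothesis on $A$. Suppose $g\sim_r g':X\to Y$, witnessed by a path object $X\xrightarrow{\sim}Y^I\xrightarrow{p}Y\times Y$ and a map $H:X\to Y^I$ with $p_0\circ H=g$ and $p_1\circ H=g'$. Then $H\circ f:A\to Y^I$ satisfies $p_0\circ(H\circ f)=g\circ f$ and $p_1\circ(H\circ f)=g'\circ f$, so $H\circ f$ is a right homotopy from $g\circ f$ to $g'\circ f$. Hence, for every fixed $f:A\to X$, precomposition by $f$ sends $\sim_r$ to $\sim_r$ on $\mathrm{Hom}_{\mathcal{C}}$, and therefore sends the generated equivalence relation on $\mathrm{Hom}_{\mathcal{C}}(X,Y)$ into the generated equivalence relation on $\mathrm{Hom}_{\mathcal{C}}(A,Y)$ by propagating along a zig‑zag of right homotopies.

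Next I would handle the first variable, and here cofibrancy of $A$ enters. This is exactly the content of the preceding lemma: if $A$ is cofibrant and $f\sim_r f':A\to X$, then for any $h:X\to Y$ one has $h\circ f\sim_r h\circ f'$. Propagating once more along a zig‑zag, postcomposition by any fixed $g:X\to Y$ carries the generated equivalence relation on $\mathrm{Hom}_{\mathcal{C}}(A,X)$ into the one on $\mathrm{Hom}_{\mathcal{C}}(A,Y)$. Combining the two steps: if $f$ and $f'$ are equivalent in $\pi^r(A,X)$ and $g,g'$ are equivalent in $\pi^r(X,Y)$, then $g\circ f$ and $g'\circ f$ are equivalent by the first step and $g'\circ f$ and $g'\circ f'$ are equivalent by the second step, so $g\circ f$ and $g'\circ f'$ represent the same class in $\pi^r(A,Y)$; this is the asserted well‑defined map.

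The only genuinely non‑formal ingredient is the stability of right homotopy under postcomposition (the first‑variable step), which truly relies on $A$ being cofibrant together with the factorization and lifting axioms and is not a consequence of the bare definitions — but it is already available as the previous lemma, so no new obstacle arises here. The sole point requiring care is the bookkeeping with the \emph{generated} equivalence relation, since $\sim_r$ itself need not be transitive unless $X$ (resp. $Y$) is fibrant; handling this by zig‑zags as above is routine, and the whole argument is the verbatim dual of the stated $\pi^l$ composition lemma.
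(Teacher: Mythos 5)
Your proof is correct and follows essentially the same route as the paper, which simply cites duality: you spell out the dual of the $\pi^l$ composition lemma, using precomposition of a right homotopy $H:X\to Y^I$ with $f$ for the easy variable and the preceding lemma (cofibrant $A$, postcomposition preserves $\sim_r$) for the other, with the zig-zag bookkeeping for the generated equivalence relation handled appropriately. No discrepancies with the intended argument.
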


\begin{proof}
Dual
\end{proof}

\subsubsection{Relationship between Left and Right Homotopy}

\begin{lema}
Let $f,g:A \rightarrow X$ be maps.
\begin{enumerate}
\item If $A$ is cofibrant and $f \sim_l g$, then $f \sim_r g$.
\item If $X$ is fibrant and $f \sim_r g$, then $f \sim_l g$.
\end{enumerate}
\end{lema}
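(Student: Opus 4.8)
The plan is to prove both statements by the standard Quillen argument and then appeal to duality for the second. I will write out the proof of part (1), assuming $A$ is cofibrant and $f \sim_l g : A \to X$; part (2) follows by the dual argument in the opposite model category $\mathcal{C}^{\mathrm{op}}$, where cofibrant becomes fibrant, left homotopy becomes right homotopy, and vice versa.

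For part (1), by Lemma (the one stating that $f \sim_l g$ implies a good left homotopy exists) I may choose a \emph{good} left homotopy $H : A \wedge I \to X$ from $f$ to $g$, so that $A \coprod A \to A \wedge I$ is a cofibration and $A \wedge I \xrightarrow{\sim} A$ is a weak equivalence. Since $A$ is cofibrant, Lemma 3 (on $i_0, i_1$ being acyclic cofibrations) tells me that $i_0 : A \to A \wedge I$ is an acyclic cofibration. Now I want to manufacture a right homotopy from $f$ to $g$, i.e. a path object $X^I$ for $X$ and a map $A \to X^I$ lifting $(f,g)$. The idea is to build this path object as a factorization adapted to $H$: using \textbf{MC5}, factor the map $(H \cdot i_0, H) $ — more precisely, one considers the map $A \wedge I \to X \times X$ whose components are $H \circ i_0 = f$ (constant in the $I$-direction) and $H$ itself. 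Actually the cleaner route: apply \textbf{MC5}(ii) to factor the diagonal-type map appropriately; but the efficient classical trick is to take a good path object $X^I$ for $X$ and use the lifting property.

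Here is the key construction. Apply \textbf{MC5} to factor the map $A \wedge I \to X$ given by $H$ is not quite it; instead, consider the map $q = (s \circ H \circ i_0,\, H): A\wedge I \to X\times X$ where $s$ is irrelevant — let me state it correctly. Let $X^I \xrightarrow{p=(p_0,p_1)} X\times X$ be a good path object with acyclic weak equivalence $X \xrightarrow{\sim} X^I$. Form the commutative square with top edge $A \xrightarrow{\;r\circ f\;} X^I$ (where $r: X \xrightarrow{\sim} X^I$), left edge the acyclic cofibration $i_0 : A \to A\wedge I$, right edge $p : X^I \to X\times X$, and bottom edge $(H\circ i_0,\, H)\cdot(\text{suitable}) : A\wedge I \to X\times X$ — concretely the bottom edge sends $A\wedge I$ to $X\times X$ via $(f\circ \pi,\, H)$ where $\pi : A\wedge I \xrightarrow{\sim} A$. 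One checks this square commutes: the two composites $A \to X\times X$ are $p\circ r\circ f = (f,f)$ and $(f\circ\pi\circ i_0,\, H\circ i_0) = (f, f)$. Since $i_0$ is an acyclic cofibration and $p$ is a fibration (good path object), \textbf{MC4}(ii) gives a lift $K : A\wedge I \to X^I$. Then $H' := K \circ i_1 : A \to X^I$ satisfies $p_0 \circ H' = f\circ\pi\circ i_1 = f$ and $p_1 \circ H' = H\circ i_1 = g$, so $H'$ is a right homotopy from $f$ to $g$ via $X^I$. This proves $f \sim_r g$.

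The main obstacle is getting the square above to actually commute — i.e. choosing the bottom map $A\wedge I \to X\times X$ so that its first component is "constantly $f$" in a way compatible with both $p\circ r\circ f$ on the $A$-corner and with the requirement $p_1 K = H$ on the other structure map. This is exactly where cofibrancy of $A$ is used (to know $i_0$ is an \emph{acyclic} cofibration, not merely a cofibration), and it is the only place the hypothesis enters. Once the lift $K$ exists, reading off $K\circ i_1$ is routine. For part (2), I invoke the principle that $\mathcal{C}^{\mathrm{op}}$ is again a model category with fibrations and cofibrations swapped, under which "$X$ fibrant in $\mathcal{C}$" becomes "$X$ cofibrant in $\mathcal{C}^{\mathrm{op}}$", "$f\sim_r g$ in $\mathcal{C}$" becomes "$f\sim_l g$ in $\mathcal{C}^{\mathrm{op}}$", and path objects become cylinder objects; applying part (1) in $\mathcal{C}^{\mathrm{op}}$ yields part (2). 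Hence the label "Dual" suffices for (2), and only (1) needs the explicit argument above.
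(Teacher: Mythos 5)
Your proof is correct and follows essentially the same route as the paper, which for this lemma simply defers to the standard argument of Dwyer--Spalinski (reference [3], p.~94): once the exploratory false starts are stripped away, your square with top edge $r \circ f$, left edge the acyclic cofibration $i_0$ (this is where cofibrancy of $A$ enters, via the lemma on $i_0,i_1$), right edge the fibration $p : X^I \to X \times X$ of a good path object, bottom edge $(f \circ \pi, H)$, together with the MC4(ii) lift $K$ and the right homotopy $K \circ i_1$, is exactly that proof, and handling part (2) by duality in $\mathcal{C}^{\mathrm{op}}$ is likewise the standard (and the paper's own) practice. No gaps.
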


\begin{proof}
 ([3], p. 94).
\end{proof}

\noindent If $A$ is cofibrant  and $X$ is fibrant,  we  will  denote  the identical right homotopy and left homotopy equivalence relations on $\mathrm{Hom}_\mathcal{C}(A,X)$ by the symbol ''$\sim$'' and say that two maps related by this relation are homotopic. The set of equivalence classes with respect to this relation is denoted $\pi(A,X)$. 

\begin{lema}
Suppose  that $f:A \rightarrow X$ is  a  map  in $\mathcal{C}$ between  objects $A$ and $X$ which are both  fibrant and cofibrant. Then $f$ is a weak equivalence if and only if $f$ has a homotopy inverse, i.e., if and only if there exists a map $g:X \rightarrow A$ such that the composites $g \circ f$ and $f \circ g$ are homotopic to the respective identity maps.
\end{lema}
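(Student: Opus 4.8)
The plan is to prove both implications by a ``factor and transport'' strategy: reduce everything to the cases of acyclic cofibrations and acyclic fibrations, where the bijection lemmas on $\pi^{l}$ and $\pi^{r}$ above do the work, and for the harder implication add a homotopy-extension-type lifting argument at the end. For ``$f$ a weak equivalence $\Rightarrow$ $f$ has a homotopy inverse'', I would use \textbf{MC5}(ii) to factor $f = p \circ q$ with $q : A \to C$ an acyclic cofibration and $p : C \to X$ a fibration. Since $\emptyset \to A \to C$ is a composite of cofibrations and $C \to X \to *$ a composite of fibrations, $C$ is both cofibrant and fibrant, and by \textbf{MC2} $p$ is a weak equivalence, hence an acyclic fibration. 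It then suffices to build homotopy inverses for $p$ and for $q$ separately and compose them, composition of homotopy classes between fibrant--cofibrant objects being well defined by the composition lemmas. For $p$: by \textbf{MC4}(i) lift $\mathrm{id}_{X}$ through $p$ to obtain $t$ with $pt = \mathrm{id}_{X}$; from $ptp = p = p\circ\mathrm{id}_{C}$ and the bijection $p_{*} : \pi^{l}(C,C) \to \pi^{l}(C,X)$ we get $tp \sim_{l} \mathrm{id}_{C}$, so $tp \sim \mathrm{id}_{C}$. Dually, by \textbf{MC4}(ii) lift $\mathrm{id}_{A}$ against $A \to *$ to obtain $r$ with $rq = \mathrm{id}_{A}$, and the bijection $q^{*} : \pi^{r}(C,C) \to \pi^{r}(A,C)$ forces $qr \sim \mathrm{id}_{C}$. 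Then $r\circ t$ is a homotopy inverse of $f$.

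For the converse, let $g$ be a homotopy inverse of $f$ and factor $f = p\circ q$ as above; by \textbf{MC2} it suffices to show the fibration $p$ is a weak equivalence. First $p$ inherits a homotopy inverse: with $r$ as above, $q\circ g$ satisfies $p(qg) = fg \sim \mathrm{id}_{X}$, while $(qg)p \sim \mathrm{id}_{C}$ follows from $(qg)p\circ q = q(gf) \sim q$ and the bijection $q^{*}$. Then I would upgrade $qg$ to a genuine section: pick a good cylinder for the cofibrant object $X$ (so $i_{0}, i_{1}$ are acyclic cofibrations by the Lemma above) and a good left homotopy $H$ from $p(qg)$ to $\mathrm{id}_{X}$; lifting $H$ along the fibration $p$ by \textbf{MC4}(ii) and restricting to the opposite end yields $s : X \to C$ with $ps = \mathrm{id}_{X}$ and $s \sim qg$, whence $sp \sim \mathrm{id}_{C}$. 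Finally I would show $p$ has the RLP with respect to every cofibration, so that $p$ is an acyclic fibration. Form the pullback $C\times_{X}C$ (a finite limit, by \textbf{MC1}), whose projections $q_{0}, q_{1}$ are fibrations by base change; factor $(\mathrm{id}_{C}, sp) : C \to C\times_{X}C$ (which lands there since $p = psp$) as an acyclic cofibration $\iota$ followed by a fibration $\pi$, and set $P_{0} = q_{0}\pi$, $P_{1} = q_{1}\pi : P \to C$. Then $P_{0}, P_{1}$ are fibrations with common section $\iota$, and since $P_{0}\iota = \mathrm{id}_{C}$ and $P_{1}\iota = sp$ are weak equivalences, \textbf{MC2} makes $P_{0}$ and $P_{1}$ acyclic fibrations; moreover $pP_{0} = pq_{0}\pi = pq_{1}\pi = pP_{1}$. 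Now, given a cofibration $m : U \to V$ and a commutative square with $p$ on the right (top map $a$, bottom map $b$, so $pa = bm$), the map $sb$ is a candidate lift, $\iota\circ a$ is a ``path'' from $a$ to $sp\,a = (sb)m$, and lifting $\iota a$ along the acyclic fibration $P_{1}$ by \textbf{MC4}(i) and postcomposing with $P_{0}$ produces an honest lift $\ell$: the triangle $\ell m = a$ holds because $P_{0}\iota = \mathrm{id}_{C}$, and the triangle $p\ell = b$ holds because $pP_{0} = pP_{1}$. Hence $p$ is an acyclic fibration, so $f = p\circ q$ is a weak equivalence by \textbf{MC2}.

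The main obstacle is this last step of the converse. The seductive shortcut --- noting that $sp$ and $ps$ are both weak equivalences and inferring that $p$ is --- is unavailable, because \textbf{MC2} only provides two-out-of-three, not two-out-of-six; so one is forced to verify the right lifting property of $p$ directly. The delicate point is to realize the homotopy $sp \sim \mathrm{id}_{C}$ through the \emph{relative} path object obtained by factoring $(\mathrm{id}_{C}, sp)$ into $C\times_{X}C$, rather than an ordinary path object for $C$: it is exactly this choice that both keeps \textbf{MC4} applicable (an acyclic fibration $P_{1}$ against the cofibration $m$) and forces the lower triangle of the constructed lift to commute. Everything else is routine bookkeeping with the composition lemmas for $\pi^{l}$ and $\pi^{r}$ and the fact that, over fibrant--cofibrant objects, $\sim_{l}$, $\sim_{r}$ and $\sim$ coincide.
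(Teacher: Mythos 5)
The paper offers no proof of this lemma beyond the citation to [3], pp.~94--95 (Dwyer--Spalinski, Lemma 4.24), so there is nothing to compare line by line; your forward direction is exactly the standard argument given there (factor $f=p\circ q$, produce $t$ and $r$ by lifting, and use the bijections $p_*$ and $q^*$ of Lemmas 4 and 10). Your converse is a self-contained, Quillen/Hovey-style route: reduce to the fibration $p$, build the honest section $s$ with $ps=\mathrm{id}_X$ and $sp\sim\mathrm{id}_C$, and then verify that $p$ has the RLP with respect to all cofibrations via the ``relative path object'' obtained by factoring $(\mathrm{id}_C,sp):C\to C\times_X C$. That outline is sound, and your choice to factor $(\mathrm{id}_C,sp)$ rather than the diagonal is exactly what makes $\iota\circ a$ available as the fibrewise path from $a$ to $spa$, so the two triangles of the constructed lift $\ell=P_0\Lambda$ do commute as you claim (using $pP_0=pP_1$ and $P_0\iota=\mathrm{id}_C$).

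There is, however, one genuine gap: you assert in passing that $P_1\iota=sp$ is a weak equivalence, with no justification. Since $\iota$ is an acyclic cofibration, \textbf{MC2} makes the acyclicity of $P_1$ \emph{equivalent} to that claim, and the acyclicity of $P_1$ is the only thing that licenses the \textbf{MC4}(i) lift against the cofibration $m$ --- so the whole last step hangs on it. At that point in the argument all you actually know is $sp\sim\mathrm{id}_C$, and ``a map homotopic to a weak equivalence is a weak equivalence'' is not among the results available here (it cannot be pulled from Proposition 5 and $\gamma$, which come later and, in [3], depend on this very lemma). Fortunately the claim is provable with tools already on the table: $C$ is cofibrant, so by Lemma 2 choose a \emph{good} left homotopy $K:C\wedge I\to C$ with $Ki_0=sp$ and $Ki_1=\mathrm{id}_C$; by Lemma 1 the maps $i_0,i_1$ are acyclic cofibrations, so \textbf{MC2} applied to $K\circ i_1=\mathrm{id}_C$ shows $K$ is a weak equivalence, and \textbf{MC2} applied to $K\circ i_0=sp$ shows $sp$ is one. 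With these two lines inserted, $P_1$ is an acyclic fibration and the rest of your lifting argument goes through; note also that the patch does not reintroduce the two-out-of-six trap you rightly flagged, since knowing $sp$ is a weak equivalence together with $ps=\mathrm{id}_X$ still does not yield $p$ by \textbf{MC2} alone --- your RLP argument remains the essential step.
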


\begin{proof}
 ([3], pp. 94-95).
\end{proof}

\subsection{The Homotopy Category of a Model Category}

We begin by looking at the following six categories associated to $\mathcal{C}$.

\begin{itemize}
\item $\mathcal{C}_c$ - the full subcategory of $\mathcal{C}$ generated by the cofibrant objects in $\mathcal{C}$.
\item $\mathcal{C}_f$ - the full subcategory of $\mathcal{C}$ generated by the  fibrant objects in $\mathcal{C}$.
\item $\mathcal{C}_{cf}$ - the full subcategory of $\mathcal{C}$ generated by the objects of $\mathcal{C}$ which are both  fibrant and cofibrant.
\item $\pi \mathcal{C}_c$ - the category consisting of the cofibrant objects in $\mathcal{C}$ and whose morphisms are right homotopy classes of maps.
\item $\pi \mathcal{C}_f$ - the category consisting of  fibrant objects in $\mathcal{C}$ and whose morphisms are left homotopy classes of maps.
\item $\pi \mathcal{C}_{cf}$ - the category consisting of objects in $\mathcal{C}$ which are both  fibrant and cofibrant, and whose morphisms are homotopy classes of maps.
\end{itemize}

As pointed out in ([3], p. 96), these  categories  will  be  used  as  tools  in  defining  $\mathrm{Ho}(\mathcal{C})$  and  constructing  a canonical functor $\mathcal{C} \rightarrow \mathrm{Ho}(\mathcal{C})$. For each object $X$ in $\mathcal{C}$ we can apply \textbf{MC5} (i) to the map $\emptyset \rightarrow X$ and obtain an acyclic  fibration $p_X:QX \tilde{\twoheadrightarrow} X$ with $QX$ cofibrant. We can also apply \textbf{MC5} (ii) to the map $X \rightarrow *$ and obtain an acyclic cofibration $i_X:X \tilde{\hookrightarrow} RX$ with $RX$ fibrant. If $X$ is itself cofibrant, let $QX = X$; if $X$ is fibrant,let $RX = X$ (see [3], p. 96).

\begin{lema}
Given  a  map $f:X \rightarrow Y$ in $\mathcal{C}$ there  exists  a  map $\tilde{f}:QX \rightarrow QY$ such that the following  diagram commutes:

\begin{equation}
\xymatrix{& QX \ar[d]_{p_X}^\sim \ar[r]^{\tilde{f}} & QY \ar[d]_{p_Y}^\sim \\ & X  \ar[r]_{f} & Y.}
\end{equation}
\noindent The  map $\tilde{f}$ depends  up  to  left  homotopy  or  up  to  right  homotopy  only  on $f$, and is  a weak  equivalence  if  and only  if $f$ is.  If $Y$ is  fibrant, then $\tilde{f}$ depends  up to left homotopy  or up to right homotopy only on the left homotopy  class of $f$.
\end{lema}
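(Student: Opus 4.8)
\textbf{The plan} is to produce $\tilde f$ by a single lifting argument and then read off each of the three stated properties from the two-out-of-three axiom \textbf{MC2} and the homotopy lemmas already established above.

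\textbf{Existence.} Since $QX$ is cofibrant, the map $\emptyset \to QX$ is a cofibration, and $p_Y:QY \to Y$ is an acyclic fibration. I would consider the commutative square whose top edge is $\emptyset \to QY$ and whose bottom edge is $f \circ p_X : QX \to Y$. By \textbf{MC4}(i) this square admits a lift $\tilde f : QX \to QY$, and by construction $p_Y \circ \tilde f = f \circ p_X$, which is precisely the commutativity of diagram (2). For the weak-equivalence claim, note that $p_X$ and $p_Y$ are weak equivalences; applying \textbf{MC2} to the pair $(p_X,f)$ shows $f$ is a weak equivalence iff $f \circ p_X$ is, and applying it to $(\tilde f, p_Y)$ shows $\tilde f$ is a weak equivalence iff $p_Y \circ \tilde f$ is. As $f \circ p_X = p_Y \circ \tilde f$, the two conditions coincide, so $\tilde f$ is a weak equivalence iff $f$ is.

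\textbf{Independence up to homotopy.} Suppose $\tilde f, \tilde f' : QX \to QY$ both make (2) commute, so $p_Y \circ \tilde f = f \circ p_X = p_Y \circ \tilde f'$. Because $QX$ is cofibrant and $p_Y$ is an acyclic fibration, the lemma on composition with an acyclic fibration gives that $(p_Y)_* : \pi^l(QX,QY) \to \pi^l(QX,Y)$ is a bijection; since $(p_Y)_*[\tilde f] = [p_Y \tilde f] = [p_Y \tilde f'] = (p_Y)_*[\tilde f']$, injectivity forces $[\tilde f] = [\tilde f']$, i.e.\ $\tilde f \sim_l \tilde f'$ (recall that, $QX$ being cofibrant, $\sim_l$ is already an equivalence relation on $\mathrm{Hom}_{\mathcal C}(QX,QY)$). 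By cofibrancy of $QX$ and the lemma relating left and right homotopy, this upgrades to $\tilde f \sim_r \tilde f'$ as well. For the last clause, assume $f \sim_l g : X \to Y$ with $Y$ fibrant, and pick lifts $\tilde f,\tilde g$; since $Y$ is fibrant the precomposition lemma gives $f \circ p_X \sim_l g \circ p_X : QX \to Y$, whence $(p_Y)_*[\tilde f] = [f p_X] = [g p_X] = (p_Y)_*[\tilde g]$, and the same bijection yields $\tilde f \sim_l \tilde g$, hence $\tilde f \sim_r \tilde g$.

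I do not expect a genuine obstacle here: every step instantiates a lemma already recorded, and the only care required is bookkeeping of which object must be cofibrant ($QX$, always available from \textbf{MC5}(i)) and which must be fibrant ($Y$, needed only for the final clause) in order for each lemma to apply. The one point worth flagging explicitly is that the bijectivity of $(p_Y)_*$ carries no fibrancy hypothesis on $QY$, so the existence, well-definedness up to homotopy, and weak-equivalence statements all hold with no assumptions whatsoever on $X$ and $Y$.
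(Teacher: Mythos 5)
Your proof is correct and is essentially the standard argument that the paper defers to by citing [3, p.~96] (Dwyer--Spalinski, Lemma 5.1): existence via \textbf{MC4}(i) applied to the square with $\emptyset \to QY$ over $f\circ p_X$, the weak-equivalence claim via \textbf{MC2}, and uniqueness (and dependence only on the left homotopy class of $f$ when $Y$ is fibrant) via the bijection $(p_Y)_*:\pi^l(QX,QY)\to\pi^l(QX,Y)$ together with the lemmas that left homotopy is an equivalence relation and implies right homotopy when the source is cofibrant. Your bookkeeping of which hypotheses (cofibrancy of $QX$, fibrancy of $Y$ only for the final clause) each lemma needs is accurate, so nothing is missing.
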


\begin{proof}
 ([3], p. 96).
\end{proof}

As pointed out in ([3], 5.2. Remark), the uniqueness statements in Lemma 15 imply that if $f = \mathrm{id}_X$ then $\tilde{f}$ is right homotopic to $\mathrm{id}_{QX}$. Similarly, if $f:X \rightarrow Y$ and $g:Y \rightarrow Z$ and $h = g \circ f$, then $\tilde{h}$ is right homotopic to $\tilde{g} \circ \tilde{f}$. Hence we can define a functor $Q:\mathcal{C} \rightarrow \pi \mathcal{C}_c$ sending $X \rightarrow QX$ and $f:X \rightarrow Y$ to the right homotopy class $[\tilde{f}] \in \pi^r(QX,QY)$.

\begin{lema}
Given a map $f:X \rightarrow Y$ in $\mathcal{C}$ there exists a map $\bar{f}:RX \rightarrow RY$ such that the following diagram commutes:

\begin{equation}
\xymatrix{& X \ar[d]_{i_X}^\sim \ar[r]^{f} & Y \ar[d]_{i_Y}^\sim \\ & RX  \ar[r]_{\bar{f}} & RY.}
\end{equation}
\noindent The map $\bar{f}$ depends up to right homotopy or up to left homotopy only on $f$,and is a weak equivalence if and only if $f$ is. If $X$ is cofibrant, then $\bar{f}$ depends up to right homotopy or up to left homotopy  only on the right homotopy  class of $f$.

\end{lema}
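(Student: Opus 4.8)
The plan is to carry out the formal dual of the proof of Lemma 15, replacing $Q$ by the fibrant replacement $R$, \textbf{MC5}(i)-factorizations by \textbf{MC5}(ii)-factorizations, cylinder objects by path objects, and left homotopies by right homotopies. For the existence of $\bar f$, I would form the composite $i_Y\circ f\colon X\to RY$ and apply \textbf{MC4}(ii) to the square whose top edge is $i_Y\circ f$, whose left edge is the acyclic cofibration $i_X\colon X\to RX$, whose right edge is the fibration $RY\to *$ (available since $RY$ is fibrant), and whose bottom edge is $RX\to *$. The resulting lift $\bar f\colon RX\to RY$ satisfies $\bar f\circ i_X=i_Y\circ f$, which is exactly the commutativity required in the statement. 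The ``weak equivalence if and only if'' clause is then immediate from \textbf{MC2}: since $i_X$ and $i_Y$ are weak equivalences and $\bar f\circ i_X=i_Y\circ f$, the map $\bar f$ is a weak equivalence iff $i_Y\circ f$ is, iff $f$ is.

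Next I would establish that $\bar f$ depends only on $f$ up to right and up to left homotopy. Let $\bar f,\bar f'\colon RX\to RY$ be two such lifts, so $\bar f\circ i_X=i_Y\circ f=\bar f'\circ i_X$. Using \textbf{MC5} pick a good path object $RY\xrightarrow{s} RY^{I}\xrightarrow{q} RY\times RY$ for $RY$, with $q$ a fibration. The square with top edge $s\circ i_Y\circ f\colon X\to RY^{I}$, left edge $i_X$, right edge $q$, and bottom edge $(\bar f,\bar f')\colon RX\to RY\times RY$ commutes, because $q\circ s$ is the diagonal of $RY$, so both composites into $RY\times RY$ equal $(i_Y f,\,i_Y f)$. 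Applying \textbf{MC4}(ii) again (the left edge $i_X$ is an acyclic cofibration, the right edge $q$ a fibration) yields a lift $H\colon RX\to RY^{I}$ with $q\circ H=(\bar f,\bar f')$, i.e. a right homotopy $\bar f\sim_r\bar f'$. Since $RY$ is fibrant, Lemma 13(2) upgrades this to $\bar f\sim_l\bar f'$, so $\bar f$ is determined by $f$ up to both right and left homotopy.

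For the last clause, assume $X$ is cofibrant and $f\sim_r f'\colon X\to Y$; I must show $\bar f\sim\bar f'$. By Lemma 11 applied with $A=X$ and $h=i_Y$, composing with $i_Y$ preserves the right homotopy, so $i_Y\circ f\sim_r i_Y\circ f'$ as maps $X\to RY$, i.e. $\bar f\circ i_X\sim_r\bar f'\circ i_X$ in $\pi^r(X,RY)$ (this makes sense since $RY$ is fibrant, by Lemma 9). Since $i_X\colon X\to RX$ is an acyclic cofibration and $RY$ is fibrant, Lemma 10 gives that $i_X^{*}\colon\pi^r(RX,RY)\to\pi^r(X,RY)$ is a bijection; from $i_X^{*}[\bar f]=[i_Y f]=[i_Y f']=i_X^{*}[\bar f']$ and injectivity of $i_X^{*}$ we conclude $[\bar f]=[\bar f']$, that is $\bar f\sim_r\bar f'$, and then $\bar f\sim_l\bar f'$ by Lemma 13(2) once more.

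I do not expect any genuine obstacle here: the argument is the literal dual of Lemma 15. The only point requiring care is bookkeeping — at each use of \textbf{MC4} the map being lifted against must be a fibration, which here comes either from the fibrancy of $RY$ or from the good-path-object factorization of \textbf{MC5}, and every interchange between $\sim_r$ and $\sim_l$ must route through the fibrancy of the \emph{target} $RY$ (via Lemmas 9, 10 and 13(2)) rather than through any cofibrancy of the target, which is not available. The one place where the extra hypothesis is actually used is the final clause, which genuinely needs the \emph{source} $X$ to be cofibrant in order to invoke Lemma 11.
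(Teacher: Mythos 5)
Your argument is correct and is exactly what the paper intends: its proof of this lemma is simply ``Dual'' (i.e., dualize Lemma 15, cf.\ [3]), and you carry out that dualization faithfully, with the lifts via \textbf{MC4}(ii), the two-out-of-three argument via \textbf{MC2}, and the correct appeals to Lemmas 9, 10, 11 and 13(2) where fibrancy of $RY$ and cofibrancy of $X$ are needed. No gaps.
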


\begin{proof}
Dual
\end{proof}

As pointed out in ([3], 5.4. Remark), the uniqueness statements in Lemma 16 imply that if $f=id_X$ then $\bar{f}$ is left homotopic to $\mathrm{id}_{RX}$. Moreover, if $f:X \rightarrow Y$ and $g:Y \rightarrow Z$ and $h=g \circ f$, then $\bar{h}$ is left  homotopic to  $\bar{g} \circ \bar{f}$, Hence we can define a functor $R:\mathcal{C} \rightarrow \pi \mathcal{C}_f$ sending $X \rightarrow RX$ and $f:X \rightarrow Y$ to the left homotopy class $[\bar{f}] \in \pi^l(RX,RY)$.

\begin{lema}
The restriction of the functor $Q:C \rightarrow \pi \mathcal{C}_c$ to $\mathcal{C}_f$ induces a functor $Q': \pi \mathcal{C}_f \rightarrow \pi \mathcal{C}_{cf}$.  The  restriction  of  the  functor $R:\mathcal{C} \rightarrow \pi \mathcal{C}_f$ to $\mathcal{C}_c$ induces a functor $R':\pi \mathcal{C}_c \rightarrow \pi \mathcal{C}_{cf}$.
\end{lema}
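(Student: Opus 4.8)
The plan is to treat the two assertions separately and dually, deriving each from the construction of $Q$ (resp. $R$) in Lemma 15 (resp. Lemma 16) together with the fact that on the subcategory $\mathcal{C}_{cf}$ the relations $\sim_l$, $\sim_r$ and $\sim$ coincide (Lemma 13 and the paragraph after it). First I would record the object-level statement for $Q'$: if $X$ is fibrant, then $QX$ is fibrant. Indeed $p_X:QX \xrightarrow{\sim} X$ is in particular a fibration and $X \to *$ is a fibration since $X$ is fibrant, so the composite $QX \to *$ is a fibration because fibrations are closed under composition. As $QX$ is cofibrant by construction, $QX \in \mathcal{C}_{cf}$, so the restriction of $Q$ to $\mathcal{C}_f$ already takes values (on objects) in $\mathcal{C}_{cf}$.

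Next I would check that $Q$ descends to left homotopy classes. A morphism $X \to Y$ in $\pi\mathcal{C}_f$ is a left homotopy class $[f]_l$ of a map $f:X \to Y$ between fibrant objects; set $Q'([f]_l) := [\tilde f]$, the homotopy class of the lift $\tilde f:QX \to QY$ from Lemma 15. Since $Y$ is fibrant, Lemma 15 says precisely that $\tilde f$ depends, up to right homotopy, only on the left homotopy class of $f$; and because $QX,QY \in \mathcal{C}_{cf}$, right homotopy equals homotopy on $\mathrm{Hom}_{\mathcal{C}}(QX,QY)$ by Lemma 13. Hence $[\tilde f]$ is a well-defined morphism of $\pi\mathcal{C}_{cf}$, independent of the chosen representative $f$ of the left homotopy class. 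Functoriality then follows from the remarks recorded after Lemma 15: $\widetilde{\mathrm{id}_X}$ is right homotopic, hence homotopic in $\mathcal{C}_{cf}$, to $\mathrm{id}_{QX}$, so $Q'(\mathrm{id}) = \mathrm{id}$; and for composable $f,g$ the lift $\widetilde{g\circ f}$ is right homotopic to $\tilde g \circ \tilde f$, so $Q'([g]_l \circ [f]_l) = Q'([g]_l)\circ Q'([f]_l)$, using that composition of homotopy classes in $\pi\mathcal{C}_{cf}$ is well-defined and that composition of left homotopy classes in $\pi\mathcal{C}_f$ is well-defined (Lemma 6). This yields $Q':\pi\mathcal{C}_f \to \pi\mathcal{C}_{cf}$.

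For $R'$ I would run the mirror argument. If $X$ is cofibrant, then $\emptyset \to X$ is a cofibration and $i_X:X \xrightarrow{\sim} RX$ is in particular a cofibration, so the composite $\emptyset \to RX$ is a cofibration and $RX \in \mathcal{C}_{cf}$; a morphism of $\pi\mathcal{C}_c$ is a right homotopy class $[f]_r$ between cofibrant objects, and by Lemma 16 — using that $X$ is cofibrant — the lift $\bar f:RX \to RY$ depends up to left homotopy only on the right homotopy class of $f$, with $\sim_l = \sim_r = \sim$ on $\mathrm{Hom}_{\mathcal{C}}(RX,RY)$ again by Lemma 13; functoriality is the dual of the above, using the remarks after Lemma 16 and Lemma 12.

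I do not expect a genuine obstacle, since every ingredient is already available. The one point that demands care — and which I would state explicitly — is invoking Lemma 15 with the hypothesis ``$Y$ fibrant'' (respectively Lemma 16 with ``$X$ cofibrant''): it is exactly this hypothesis that upgrades ``$\tilde f$ is determined by $f$ up to homotopy'' to ``$\tilde f$ is determined only by the homotopy class of $f$'', which is what makes the assignment on the morphism classes of $\pi\mathcal{C}_f$ (resp. $\pi\mathcal{C}_c$) well-defined; one then uses the coincidence of the three homotopy relations on $\mathcal{C}_{cf}$ to see that the target $\pi\mathcal{C}_{cf}$ indeed receives these classes as morphisms.
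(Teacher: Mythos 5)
Your proof is correct and is essentially the argument the paper delegates to its reference: the paper's ``proof'' of this lemma is only the citation ([3], p.~97), and what Dwyer--Spalinski do there is exactly your argument --- $QX$ (resp.\ $RX$) lands in $\mathcal{C}_{cf}$ by closure of fibrations (resp.\ cofibrations) under composition, and the fibrancy of $Y$ (resp.\ cofibrancy of $X$) clause of Lemma 15 (resp.\ Lemma 16), together with the coincidence of left and right homotopy on $\mathcal{C}_{cf}$ and the remarks on identities and composites, makes the induced assignment well defined and functorial. No gaps; the points you flag as needing care are indeed the right ones.
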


\begin{proof}
([3], p. 97).
\end{proof}

\begin{dfn}
The homotopy category $\mathrm{Ho}(\mathcal{C})$ of a model category $\mathcal{C}$ is the category with the same objects as $\mathcal{C}$ and with
$$\mathrm{Hom}_{\mathrm{Ho}(\mathcal{C})}(X,Y) = \mathrm{Hom}_{\pi\mathcal{C}_{cf}}(R'QX,R'QY)= \pi (RQX,RQY).$$
\end{dfn}

As pointed out in ([3], 5.7. Remark), there is a functor $\gamma:\mathcal{C} \rightarrow \mathrm{Ho}(\mathcal{C})$ which is the identity on objects and sends a  map $f:X \rightarrow Y$ to  the  map $R'Q(f):R'Q(X) \rightarrow R'Q(Y)$.  If  each of  the  objects $X$ and $Y$ is  both   fibrant  and  cofibrant,  then  by  construction  the map $\gamma:\mathrm{Hom}_{\mathcal{C}}(X,Y) \rightarrow \mathrm{Hom}_{\mathrm{Ho}(\mathcal{C})}(X,Y)$  is  surjective  and  induces  a  bijection $\pi(X,Y) \cong \mathrm{Hom}_{\mathrm{Ho}(\mathcal{C})}(X,Y)$.

\begin{pps}
If $f$ is a morphism of $\mathcal{C}$,then $\gamma(f)$ is an isomorphism in $\mathrm{Ho}(\mathcal{C})$ if and only if $f$ is a weak equivalence. The morphisms of $\mathrm{Ho}(\mathcal{C})$ are generated under composition by the images under $\gamma$ of morphisms of $\mathcal{C}$ and the inverses of images under $\gamma$ of weak equivalences in $\mathcal{C}$.
\end{pps}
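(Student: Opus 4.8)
The plan is to reduce both assertions to Whitehead's theorem (Lemma 14) together with the fact, recorded in Lemmas 15 and 16, that the cofibrant and fibrant replacement constructions both preserve and detect weak equivalences. Throughout I will use that $\gamma$ is the identity on objects, that $QX$ is cofibrant while $RQX$ is both fibrant and cofibrant (the latter because $\emptyset \to QX \to RQX$ is a composite of cofibrations and hence a cofibration), and that by construction $\gamma(f) \in \mathrm{Hom}_{\mathrm{Ho}(\mathcal{C})}(X,Y)$ is precisely the homotopy class of the map $RQf \colon RQX \to RQY$ obtained by feeding the map $\widetilde{f}\colon QX \to QY$ of Lemma 15 into Lemma 16; equivalently, $\mathrm{Hom}_{\mathrm{Ho}(\mathcal{C})}$ and its composition law, restricted to fibrant-cofibrant objects, are those of $\pi\mathcal{C}_{cf}$.

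For the first assertion, suppose $f$ is a weak equivalence. Then $\widetilde{f}$ is a weak equivalence by Lemma 15, and hence $RQf$ is a weak equivalence by Lemma 16; since its source and target are both fibrant and cofibrant, Lemma 14 supplies a homotopy inverse $g \colon RQY \to RQX$, and then $[g]$ is a two-sided inverse of $\gamma(f) = [RQf]$ in $\pi\mathcal{C}_{cf}$, so $\gamma(f)$ is an isomorphism in $\mathrm{Ho}(\mathcal{C})$. Conversely, if $\gamma(f) = [RQf]$ is invertible in $\pi\mathcal{C}_{cf}$, then $RQf$ admits a homotopy inverse, so $RQf$ is a weak equivalence by Lemma 14, and the "only if" directions of Lemmas 16 and 15 then force $\widetilde{f}$, and hence $f$, to be a weak equivalence.

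For the second assertion, the idea is to connect every object to its fibrant-cofibrant replacement by a canonical zig-zag of weak equivalences and then exploit the surjectivity of $\gamma$ between fibrant-cofibrant objects. Fix $X$. The maps $p_X \colon QX \xrightarrow{\sim} X$ (an acyclic fibration) and $i_{QX} \colon QX \xrightarrow{\sim} RQX$ (an acyclic cofibration) are weak equivalences, so by the first assertion $\gamma(p_X)$ and $\gamma(i_{QX})$ are isomorphisms in $\mathrm{Ho}(\mathcal{C})$; set $w_X := \gamma(i_{QX}) \circ \gamma(p_X)^{-1} \colon X \to RQX$, an isomorphism of $\mathrm{Ho}(\mathcal{C})$ built from one image under $\gamma$ and the inverse of another. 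Given an arbitrary $\phi \in \mathrm{Hom}_{\mathrm{Ho}(\mathcal{C})}(X,Y)$, the morphism $w_Y \circ \phi \circ w_X^{-1}$ lies in $\mathrm{Hom}_{\mathrm{Ho}(\mathcal{C})}(RQX, RQY) = \pi(RQX, RQY)$, and since $RQX, RQY$ are both fibrant and cofibrant the map $\gamma \colon \mathrm{Hom}_{\mathcal{C}}(RQX, RQY) \to \mathrm{Hom}_{\mathrm{Ho}(\mathcal{C})}(RQX, RQY)$ is surjective (the Remark following the definition of $\mathrm{Ho}(\mathcal{C})$); choose $\psi$ in $\mathcal{C}$ with $\gamma(\psi) = w_Y \circ \phi \circ w_X^{-1}$. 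Rearranging, $\phi = w_Y^{-1} \circ \gamma(\psi) \circ w_X = \gamma(p_Y) \circ \gamma(i_{QY})^{-1} \circ \gamma(\psi) \circ \gamma(i_{QX}) \circ \gamma(p_X)^{-1}$, which exhibits $\phi$ as a composite of images under $\gamma$ of morphisms of $\mathcal{C}$ and of inverses of images under $\gamma$ of weak equivalences of $\mathcal{C}$, as required.

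I expect the main difficulty to be organisational rather than conceptual. One must keep track of the fact that $\gamma(f)$ has been defined to be $[RQf]$, so that "$\gamma(f)$ is an isomorphism in $\mathrm{Ho}(\mathcal{C})$" unwinds exactly to "$RQf$ has a homotopy inverse", which is precisely the hypothesis (and conclusion) of Whitehead's theorem; and in the second part one must apply the surjectivity of $\gamma$ — which is available only between objects that are simultaneously fibrant and cofibrant — to $w_Y \circ \phi \circ w_X^{-1}$ rather than to $\phi$ itself, which is why the zig-zag $X \xleftarrow{p_X} QX \xrightarrow{i_{QX}} RQX$ is used to transport $\phi$ to the fibrant-cofibrant world in the first place. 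The remaining verifications (the identifications of the relevant Hom-sets, the fibrancy and cofibrancy of $QX$ and $RQX$, and the functoriality of $\gamma$) are routine.
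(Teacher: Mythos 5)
Your proof is correct, and it is essentially the standard argument for this statement: the paper itself gives no proof but simply cites Dwyer--Spalinski ([3], Prop.\ 5.8, pp.\ 97--98), whose proof proceeds exactly as you do --- Whitehead's theorem (Lemma 14) applied to $RQf$ together with the ``if and only if'' clauses of Lemmas 15 and 16 for the first assertion, and conjugation by the canonical zig-zag $X \xleftarrow{p_X} QX \xrightarrow{i_{QX}} RQX$ plus surjectivity of $\gamma$ on fibrant--cofibrant objects for the generation statement. So your proposal matches the cited proof in both structure and detail.
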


\begin{proof}
([3], pp. 97-98).
\end{proof}

\begin{cor}
If $F$ and $G$ are two functors $\mathrm{Ho}(\mathcal{C}) \rightarrow \mathcal{D}$ and $t:F \gamma \rightarrow G \gamma$ is a natural transformation, thent also gives a natural transformation from $F$ to $G$.
\end{cor}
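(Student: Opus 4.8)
The plan is to reduce everything to Proposition~4, which says that the morphisms of $\mathrm{Ho}(\mathcal{C})$ are generated under composition by the maps $\gamma(f)$ for $f$ a morphism of $\mathcal{C}$ together with the inverses $\gamma(w)^{-1}$ of the images of weak equivalences $w$. First I would note that $\gamma:\mathcal{C}\to\mathrm{Ho}(\mathcal{C})$ is the identity on objects, so $\mathrm{Ho}(\mathcal{C})$ and $\mathcal{C}$ have the same objects; consequently, for each object $X$ the given natural transformation $t:F\gamma\to G\gamma$ already provides a morphism $t_X:F(X)\to G(X)$ in $\mathcal{D}$. These $t_X$ will serve, unchanged, as the components of the claimed natural transformation $F\to G$. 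What must be checked is that the naturality square $G(\phi)\circ t_X=t_Y\circ F(\phi)$ commutes for \emph{every} morphism $\phi:X\to Y$ of $\mathrm{Ho}(\mathcal{C})$, not only for those of the form $\gamma(f)$.

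Next I would verify this on the two families of generators. For $\phi=\gamma(f)$ with $f$ a morphism of $\mathcal{C}$, the square $G(\gamma(f))\circ t_X=t_Y\circ F(\gamma(f))$ is exactly the naturality of $t$ at $f$, so it holds by hypothesis. For $\phi=\gamma(w)^{-1}$ with $w:X\to Y$ a weak equivalence, Proposition~4 tells us that $\gamma(w)$ is an isomorphism in $\mathrm{Ho}(\mathcal{C})$; applying the functors $F$ and $G$, the maps $F(\gamma(w))$ and $G(\gamma(w))$ are isomorphisms in $\mathcal{D}$ with inverses $F(\gamma(w)^{-1})$ and $G(\gamma(w)^{-1})$ respectively. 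Starting from the already-established square $t_Y\circ F(\gamma(w))=G(\gamma(w))\circ t_X$ and composing on the left with $G(\gamma(w)^{-1})$ and on the right with $F(\gamma(w)^{-1})$, one obtains $G(\gamma(w)^{-1})\circ t_Y=t_X\circ F(\gamma(w)^{-1})$, which is precisely naturality at $\gamma(w)^{-1}$.

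Finally I would invoke the standard pasting fact: the class of morphisms $\phi$ of $\mathrm{Ho}(\mathcal{C})$ for which the naturality square commutes is closed under composition (if the squares for $\phi:X\to Y$ and $\psi:Y\to Z$ commute, then so does the one for $\psi\circ\phi$, by stacking the two squares and using functoriality of $F$ and $G$). Since by Proposition~4 this class contains a generating set, it is all of $\mathrm{Ho}(\mathcal{C})$, and hence the $t_X$ assemble into a natural transformation $F\to G$. I do not expect a genuine obstacle here; the only points requiring care are that $\gamma$ is the identity on objects (so the components transfer verbatim) and that Proposition~4 is used twice — once for the generating set, once for the invertibility of $F(\gamma(w))$ and $G(\gamma(w))$ — with the rest being a routine diagram chase.
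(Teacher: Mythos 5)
Your argument is correct and is essentially the argument behind the paper's citation of [3]: since the components of $t$ transfer verbatim ($\gamma$ being the identity on objects), one checks naturality on the generators supplied by Proposition 4 — morphisms $\gamma(f)$ and inverses $\gamma(w)^{-1}$ of images of weak equivalences — and concludes by closure of commuting naturality squares under composition. No gap; this is the standard proof.
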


\begin{proof}
([3], p. 27).
\end{proof}

\begin{lema}
Let $\mathcal{C}$ be a model category and $F:\mathcal{C} \rightarrow \mathcal{D}$ be a functor taking weak equivalences in $\mathcal{C}$ into isomorphisms in $\mathcal{D}$. If $f \sim_l g:A \rightarrow X$ or $f \sim_r g:A \rightarrow X$, then $F(f) = F(g)$ in $\mathcal{D}$.
\end{lema}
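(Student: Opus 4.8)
The plan is to handle the left-homotopy case directly and obtain the right-homotopy case by the formally dual argument; no cofibrancy or fibrancy hypotheses will be needed.

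First I would unpack the data of a left homotopy. Suppose $f \sim_l g : A \to X$. By Definition 8 there is a cylinder object $A \wedge I$, that is, a factorization $A \coprod A \xrightarrow{i} A \wedge I \xrightarrow{\sigma} A$ of the folding map $\mathrm{id}_A + \mathrm{id}_A$ with $\sigma$ a weak equivalence, together with a map $H : A \wedge I \to X$ satisfying $H \circ i_0 = f$ and $H \circ i_1 = g$, where $i_0 = i \cdot \mathrm{in}_0$ and $i_1 = i \cdot \mathrm{in}_1$. Since $\sigma \circ i$ is the folding map, precomposing with the two coproduct injections gives $\sigma \circ i_0 = \mathrm{id}_A = \sigma \circ i_1$. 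This is the only structural fact about cylinder objects the proof uses.

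Next I would apply $F$ and cancel. Because $\sigma$ is a weak equivalence, $F(\sigma)$ is an isomorphism in $\mathcal{D}$. From $F(\sigma) \circ F(i_0) = F(\sigma \circ i_0) = F(\mathrm{id}_A) = \mathrm{id}_{F(A)}$ and likewise $F(\sigma) \circ F(i_1) = \mathrm{id}_{F(A)}$, cancelling the isomorphism $F(\sigma)$ on the left yields $F(i_0) = F(i_1)$ (both equal $F(\sigma)^{-1}$). Hence
$$F(f) = F(H \circ i_0) = F(H) \circ F(i_0) = F(H) \circ F(i_1) = F(H \circ i_1) = F(g).$$

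Finally, the right-homotopy case is dual (and could alternatively be routed through Lemma 13 when the relevant hypotheses hold, though that is unnecessary): a right homotopy $f \sim_r g$ supplies a path object $X \xrightarrow{s} X^I \xrightarrow{p} X \times X$ factoring the diagonal, with $s$ a weak equivalence, and a map $H : A \to X^I$ with $p_0 \circ H = f$ and $p_1 \circ H = g$, where $p_j = pr_j \cdot p$. Since $p \circ s$ is the diagonal, $p_0 \circ s = \mathrm{id}_X = p_1 \circ s$; applying $F$ and cancelling the isomorphism $F(s)$ on the right gives $F(p_0) = F(p_1)$, whence $F(f) = F(p_0) \circ F(H) = F(p_1) \circ F(H) = F(g)$. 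There is no genuine obstacle here: the argument is a single cancellation once the structure maps $i_0, i_1$ (resp. $p_0, p_1$) of the (co)cylinder are seen to become equal under $F$. The one point worth flagging explicitly is that neither $A$ cofibrant nor $X$ fibrant is assumed — any cylinder or path object furnished by \textbf{MC5} suffices.
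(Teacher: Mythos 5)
Your proof is correct and is essentially the argument in the reference the paper cites for this lemma (Dwyer--Spalinski, Lemma 5.9): the weak equivalence $A \wedge I \to A$ (resp. $X \to X^I$) becomes an isomorphism under $F$, forcing $F(i_0) = F(i_1)$ (resp. $F(p_0) = F(p_1)$), and the right-homotopy case is dual. Since the paper gives no independent proof beyond that citation, there is nothing further to compare; your observation that no cofibrancy or fibrancy hypotheses are needed is also accurate.
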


\begin{proof}
([3], p. 98).
\end{proof}

\begin{pps}
Suppose  that $A$ is  a  cofibrant  object  of $\mathcal{C}$ and $X$ is  a   fibrant object of $\mathcal{C}$.  Then  the  map $\gamma:\mathrm{Hom}_{\mathcal{C}}(A,X) \rightarrow \mathrm{Hom}_{\mathrm{Ho}(\mathcal{C})}(A,X)$ is  surjective,  and induces a bijection $\pi (A,X) \cong \mathrm{Hom}_{\mathrm{Ho}(\mathcal{C})}(A,X)$.
\end{pps}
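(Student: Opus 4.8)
The plan is to exhibit an explicit inverse to the map $\bar\gamma\colon\pi(A,X)\to\mathrm{Hom}_{\mathrm{Ho}(\mathcal C)}(A,X)$ induced by $\gamma$. That $\gamma$ descends to such a $\bar\gamma$ is immediate: by Proposition~3, $\gamma$ carries weak equivalences to isomorphisms, so by Lemma~18 one has $\gamma(f)=\gamma(g)$ whenever $f\sim_l g$ or $f\sim_r g$; hence $\gamma$ is constant on homotopy classes and factors as $\gamma=\bar\gamma\circ q$, where $q\colon\mathrm{Hom}_{\mathcal C}(A,X)\twoheadrightarrow\pi(A,X)$ is the quotient. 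Once $\bar\gamma$ is shown to be bijective, surjectivity of $\gamma$ follows at once since $q$ is onto. Throughout I will use without comment that for $A$ cofibrant and $X$ fibrant the relations $\sim_l$ and $\sim_r$ on $\mathrm{Hom}_{\mathcal C}(A,X)$ coincide (Lemma~13), so that $\pi^l(A,X)=\pi^r(A,X)=\pi(A,X)$ as sets of equivalence classes.

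First I would fix the replacements. Since $A$ is cofibrant we take $QA=A$, $p_A=\mathrm{id}_A$, so $RQA=RA$; here $i_A\colon A\to RA$ is an acyclic cofibration, $RA$ is fibrant by construction, and $RA$ is cofibrant because $\emptyset\to A\xrightarrow{i_A}RA$ is a composite of cofibrations. For $X$: $p_X\colon QX\to X$ is an acyclic fibration with $QX$ cofibrant, $i_{QX}\colon QX\to RQX$ is an acyclic cofibration, and $RQX$ is fibrant; it is also cofibrant, since $\emptyset\to QX\xrightarrow{i_{QX}}RQX$ is a composite of cofibrations. The extra ingredient I need is a comparison map: applying MC4 to the acyclic cofibration $i_{QX}$ and the fibration $X\to *$ (available since $X$ is fibrant) yields $r\colon RQX\to X$ with $r\circ i_{QX}=p_X$, and by two-out-of-three (MC2) applied to $p_X=r\circ i_{QX}$ the map $r$ is a weak equivalence.

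Next I would assemble the bijection. By definition $\mathrm{Hom}_{\mathrm{Ho}(\mathcal C)}(A,X)=\pi(RQA,RQX)=\pi(RA,RQX)$, and I propose the composite
\[
\Psi\colon\ \pi(RA,RQX)\ \xrightarrow{\ i_A^{*}\ }\ \pi(A,RQX)\ \xrightarrow{\ r_{*}\ }\ \pi(A,X).
\]
The first arrow is a bijection by Lemma~10 ($RQX$ fibrant, $i_A$ an acyclic cofibration). For the second, Lemmas~4 and~10 do not apply to $r$ directly, so I would factor $r=p\circ j$ by MC5, with $j\colon RQX\to Z$ an acyclic cofibration and $p\colon Z\to X$ a fibration (necessarily acyclic, by MC2); then $Z$ is fibrant ($Z\to X\to *$ is a composite of fibrations) and cofibrant ($\emptyset\to RQX\xrightarrow{j}Z$ is a composite of cofibrations). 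Now $r_{*}=p_{*}\circ j_{*}$ on $\pi^l(A,-)$: here $p_{*}$ is a bijection by Lemma~4, and $j_{*}$ is a bijection because $j$ is a weak equivalence between fibrant–cofibrant objects, so by Lemma~14 it has a homotopy inverse $j'$, and Lemma~6 converts $j'j\sim\mathrm{id}$, $jj'\sim\mathrm{id}$ into $j'_{*}j_{*}=\mathrm{id}$, $j_{*}j'_{*}=\mathrm{id}$. Hence $\Psi$ is a bijection.

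Finally I would verify $\Psi\circ\bar\gamma=\mathrm{id}_{\pi(A,X)}$, which forces $\bar\gamma=\Psi^{-1}$ and completes the proof. Unwinding the construction of $\gamma$, the class $\gamma(f)\in\pi(RA,RQX)$ is represented by $\overline{\tilde f}\colon RA\to RQX$, where $\tilde f\colon A\to QX$ is a lift with $p_X\circ\tilde f=f$ (Lemma~15, using $QA=A$) and $\overline{\tilde f}$ is a lift with $\overline{\tilde f}\circ i_A=i_{QX}\circ\tilde f$ (Lemma~16 applied to $\tilde f$). Therefore
\[
\Psi(\gamma(f))=\bigl[\,r\circ\overline{\tilde f}\circ i_A\,\bigr]=\bigl[\,r\circ i_{QX}\circ\tilde f\,\bigr]=\bigl[\,p_X\circ\tilde f\,\bigr]=[\,f\,],
\]
as required. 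The main obstacle is the middle bijection $r_{*}$: the two homotopy-invariance lemmas available off the shelf concern acyclic fibrations (Lemma~4) and acyclic cofibrations (Lemma~10), not post-composition with a general weak equivalence whose target is merely fibrant, so one is forced to route $r$ through a fibrant–cofibrant object $Z$ and invoke the model-categorical Whitehead theorem (Lemma~14). Everything else is bookkeeping with the identifications $\pi^l=\pi^r=\pi$ and with the defining commutativities of $\tilde f$, $\overline{\tilde f}$, and $r$.
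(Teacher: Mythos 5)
Your argument is correct, and it fills in a proof that the paper itself does not spell out: the paper simply cites Dwyer--Spalinski ([3], pp.~98--99) for this proposition. Your route is essentially the standard one behind that citation: reduce $\mathrm{Hom}_{\mathrm{Ho}(\mathcal{C})}(A,X)$ to $\pi(RA,RQX)$ using $QA=A$, build the comparison map $r\colon RQX\to X$ with $r\circ i_{QX}=p_X$ by \textbf{MC4} (and note it is a weak equivalence by \textbf{MC2}), transport along $i_A^*$ (Lemma~10) and $r_*$, and check on representatives via Lemmas~15 and~16 that the composite sends $\gamma(f)$ back to $[f]$; well-definedness of $\bar\gamma$ via Proposition~3 and Lemma~18 is also the intended mechanism. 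The one genuinely personal touch is how you handle $r_*$: rather than proving injectivity and surjectivity of $\bar\gamma$ directly by lifting arguments (as in [3]), you factor $r=p\circ j$ by \textbf{MC5}, observe $Z$ is fibrant and cofibrant, and combine Lemma~4 for $p_*$ with the Whitehead-type Lemma~14 plus Lemma~6 for $j_*$; this keeps the whole proof a chain of bijections followed by one computation, at the cost of invoking Lemma~14, whereas the cited proof stays closer to elementary lifting manipulations. All hypotheses you use (cofibrancy of $RA$, $RQX$, $Z$; fibrancy of $RQX$, $Z$; the identifications $\pi^l=\pi^r=\pi$ via Lemma~13) are verified correctly, so the argument stands as a valid substitute for the external reference.
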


\begin{proof}
([3], pp. 98-99).
\end{proof}

\begin{cor}
The canonical functor $\mathcal{C}_{cf} \rightarrow \mathcal{C}_{cf} / \sim$ has the same universal property as the functor $\mathcal{C}_{cf} \rightarrow \mathrm{Ho} \mathcal{C}_{cf} = \mathcal{W}^{-1} \mathcal{C}_{cf}$. Thus, there is an isomorphism of categories $\mathcal{C}_{cf} / \sim \rightarrow \mathrm{Ho} \mathcal{C}_{cf}$. In particular, $\mathrm{Ho} \mathcal{C}_{cf}$ is small.
\end{cor}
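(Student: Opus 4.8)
The plan is to show that the quotient functor $\pi\colon\mathcal{C}_{cf}\to\mathcal{C}_{cf}/\!\sim$ satisfies exactly the universal property that defines the localization $\mathcal{C}_{cf}\to\mathcal{W}^{-1}\mathcal{C}_{cf}=\mathrm{Ho}\,\mathcal{C}_{cf}$, and then to extract the isomorphism and the smallness claim by pure category theory. First I would check that $\mathcal{C}_{cf}/\!\sim$ is a genuine category: on $\mathcal{C}_{cf}$ every object is both fibrant and cofibrant, so by Lemma 13 the relations $\sim_{l}$ and $\sim_{r}$ agree there and $\pi(A,X)=\pi^{l}(A,X)=\pi^{r}(A,X)$; by Lemmas 3 and 9 this is an equivalence relation on each hom-set, and by Lemmas 6 and 12 composition descends to homotopy classes on both sides, so $\sim$ is a categorical congruence. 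Thus $\pi$ is the identity on objects, surjective on morphisms, and $\mathrm{Hom}_{\mathcal{C}_{cf}/\sim}(A,X)=\pi(A,X)$. Moreover $\pi$ inverts weak equivalences: if $f\colon A\to X$ is a weak equivalence in $\mathcal{C}_{cf}$, then $A$ and $X$ are bifibrant, so Lemma 14 supplies a homotopy inverse $g$ with $g\circ f\sim\mathrm{id}_{A}$ and $f\circ g\sim\mathrm{id}_{X}$, whence $\pi(g)\pi(f)=\mathrm{id}$ and $\pi(f)\pi(g)=\mathrm{id}$.

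The substantive step is the factorization property. Let $F\colon\mathcal{C}_{cf}\to\mathcal{D}$ be any functor carrying weak equivalences to isomorphisms; I must show $F$ factors uniquely through $\pi$. The crux is that $F$ identifies homotopic maps. This is the content of Lemma 18, but that lemma is stated for functors out of the ambient model category $\mathcal{C}$, whereas $\mathcal{C}_{cf}$ is not itself a model category, so its proof has to be replayed inside $\mathcal{C}_{cf}$. Given $f\sim g\colon A\to X$ with $A,X\in\mathcal{C}_{cf}$, I would use Lemma 2 — legitimate since $X$ is fibrant — to pick a \emph{very good} left homotopy $H\colon A\wedge I\to X$ through a very good cylinder object $A\coprod A\xrightarrow{i}A\wedge I\xrightarrow{s}A$, with $i$ a cofibration and $s$ an acyclic fibration. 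The key observation is that $A\wedge I$ again lies in $\mathcal{C}_{cf}$: it is fibrant because $A\wedge I\xrightarrow{s}A\to *$ is a composite of fibrations, and cofibrant because $\emptyset\to A\coprod A\xrightarrow{i}A\wedge I$ is a composite of cofibrations ($\emptyset\to A$ is a cofibration, $A\to A\coprod A$ is its cobase change, and cofibrations are closed under composition). Hence $i_{0},i_{1},s,H$ are all morphisms of $\mathcal{C}_{cf}$; from $s\circ i_{0}=s\circ i_{1}=\mathrm{id}_{A}$ and $F(s)$ being an isomorphism we obtain $F(i_{0})=F(i_{1})$, and therefore $F(f)=F(H)F(i_{0})=F(H)F(i_{1})=F(g)$. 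So $F$ is constant on $\sim$-classes, and $\bar F(A):=F(A)$, $\bar F([f]):=F(f)$ defines a functor $\bar F\colon\mathcal{C}_{cf}/\!\sim\,\to\mathcal{D}$ with $\bar F\circ\pi=F$, unique because $\pi$ is bijective on objects and surjective on morphisms.

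Putting these together, $\pi$ enjoys the universal property of the localization, so the comparison functors obtained by applying each universal property to the other quotient functor are mutually inverse; taking the localization functor to be the identity on objects, both comparison functors are the identity on objects and, by the uniqueness clauses in the two universal properties, their composites in either order are the identity functors — so they constitute an \emph{isomorphism} of categories $\mathcal{C}_{cf}/\!\sim\,\xrightarrow{\cong}\,\mathrm{Ho}\,\mathcal{C}_{cf}$, not merely an equivalence. Smallness then drops out: $\mathcal{C}_{cf}/\!\sim$ has the same objects as $\mathcal{C}_{cf}$, and each of its hom-sets is a quotient of the corresponding hom-set of $\mathcal{C}_{cf}$; in the setting of this paper, where all structures are finite over a fixed finite vocabulary, $\mathcal{C}_{cf}$ is essentially small with small hom-sets, hence so are the quotient and $\mathrm{Ho}\,\mathcal{C}_{cf}$.

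I expect the only genuine obstacle to be exactly the point isolated in the second paragraph: Lemma 18 cannot be invoked as a black box, since its hypothesis is a functor on a model category, so one has to re-run its proof inside $\mathcal{C}_{cf}$ — which is what forces the use of \emph{very good} cylinder objects (available because targets in $\mathcal{C}_{cf}$ are fibrant) together with the small verification that those cylinder objects are themselves bifibrant. Everything else — that the quotient is a category, that $\pi$ inverts weak equivalences, and the final abstract comparison of universal properties — is routine.
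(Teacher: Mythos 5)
Your proposal is correct. Note that the paper itself states this corollary with no proof at all (it is the one unproved statement in Section 2, implicitly deferred to the development of [3] that the section transcribes), so there is nothing to compare against; your argument supplies exactly the standard justification that is missing, and it correctly isolates the one point that cannot be quoted as a black box, namely that Lemma 18 is stated for functors on the model category $\mathcal{C}$ rather than on $\mathcal{C}_{cf}$, forcing you to re-run its proof with a very good cylinder object and to check that this cylinder is itself bifibrant. Your handling of the final clause is also the right reading: ``$\mathrm{Ho}\,\mathcal{C}_{cf}$ is small'' does not hold for an arbitrary model category, and your observation that $\mathcal{C}_{cf}/\!\sim$ has the same objects as $\mathcal{C}_{cf}$ and quotient hom-sets, so that smallness is inherited in the paper's setting of finite structures over a finite vocabulary (and local smallness in general), is the most that the statement can legitimately claim.
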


\subsection{Weak Factorization Systems}

\begin{dfn}
In a category $\mathcal{C}$, we say that the morphism $f:A \rightarrow B$ has the left lifting property with respect to the morphism $g:C \rightarrow D$ if for any commutative diagram of solid arrows
$$\xymatrix{& A \ar[d]_{f} \ar[r] & C \ar[d]^g 
		\\ & B \ar[r] \ar@{-->}[ur]^h & D}$$
\noindent there is a morphism $h$ which makes the complete diagram commutative. We will write $f \boxslash g$ if $f$ has the left lifting property with respect to $g$.
\noindent For any class of morphisms $S$, we define
$$S^{\boxslash} = \{g \in \mathcal{C} \mid f \boxslash g \textrm{ para todo } f \in S\},$$
	
	$$^{\boxslash}S = \{f \in \mathcal{C} \mid f \boxslash g \textrm{ para todo } g \in S\}.$$
	\noindent Note that for any set $S$, the sets $S^{\boxslash}$ and $^{\boxslash}S$ are closed under retracts.
\end{dfn}

\begin{dfn}
A maximal lifting system $(\mathcal{L},\mathcal{R})$ in a category $\mathcal{C}$ is a pair of classes of morphisms, such that $\mathcal{L} = {^{\boxslash}\mathcal{R}}$ and $\mathcal{R} = \mathcal{L}^{\boxslash}$.
\end{dfn}

The following theorem is well-known; for a proof (and a more general statement), see ([19], 14.1.8).

\begin{thm}[Folklore]
    If $(\mathcal{L},\mathcal{R})$ is a maximal lifting system in a category $\mathcal{C}$, $\mathcal{L}$ and $\mathcal{R}$ contain all isomorphisms and are closed under composition and retraction. Moreover, $\mathcal{L}$ is closed under coproducts and pushouts along morphisms in $\mathcal{C}$, and $\mathcal{R}$ is closed under products and pullbacks along morphisms in $\mathcal{C}$.
\end{thm}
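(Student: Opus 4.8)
The plan is to isolate a single lemma that does all the work and then read the theorem off it twice, once directly and once in the opposite category. For an \emph{arbitrary} class of morphisms $S$ in $\mathcal{C}$, I claim that $\,{}^{\boxslash}S$ contains every isomorphism and is closed under composition, retracts, coproducts, and pushouts along arbitrary morphisms of $\mathcal{C}$; dually, $S^{\boxslash}$ contains every isomorphism and is closed under composition, retracts, products, and pullbacks. Granting this, the theorem follows at once: a maximal lifting system satisfies $\mathcal{L}={}^{\boxslash}\mathcal{R}$, so $\mathcal{L}$ inherits the first list of properties by taking $S=\mathcal{R}$, and it satisfies $\mathcal{R}=\mathcal{L}^{\boxslash}$, so $\mathcal{R}$ inherits the second list by taking $S=\mathcal{L}$. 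Moreover the dual list is not really new content: the assertion that $S^{\boxslash}$ is closed under products and pullbacks is exactly the assertion that $\,{}^{\boxslash}(S^{\mathrm{op}})$ is closed under coproducts and pushouts, interpreted in $\mathcal{C}^{\mathrm{op}}$, since passing to the opposite category turns RLP into LLP, products into coproducts, and pullbacks into pushouts. So it suffices to establish the assertions about $\,{}^{\boxslash}S$.

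Fix $S$ and check each closure property by a diagram chase against a fixed but arbitrary $g:X\to Y$ in $S$. If $f:A\to B$ is an isomorphism, then for any square with top $u:A\to X$ and bottom $v:B\to Y$ the map $h:=u\circ f^{-1}$ satisfies $h\circ f=u$ and $g\circ h=guf^{-1}=vff^{-1}=v$, so $f\in{}^{\boxslash}S$. If $f_1:A\to B$ and $f_2:B\to C$ lie in ${}^{\boxslash}S$ and a square over $f_2f_1$ with top $u$ and bottom $v$ is given, first solve the square over $f_1$ with top $u$ and bottom $vf_2$, obtaining $h_1:B\to X$ with $h_1f_1=u$ and $gh_1=vf_2$; then solve the square over $f_2$ with top $h_1$ and bottom $v$, obtaining $h_2:C\to X$ with $h_2f_2=h_1$ and $gh_2=v$; the composite relation shows $h_2$ solves the original square, so $f_2f_1\in{}^{\boxslash}S$. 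Closure under retracts was already observed to hold for ${}^{\boxslash}S$ (and $S^{\boxslash}$) for any $S$, so nothing need be added there.

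For coproducts, let $f_i:A_i\to B_i$ be morphisms in ${}^{\boxslash}S$ and consider a square over $\coprod_i f_i$ with top $u:\coprod_i A_i\to X$ and bottom $v:\coprod_i B_i\to Y$. Composing with the coproduct injections yields, for each $i$, a square over $f_i$ with top $u_i$ and bottom $v_i$; pick lifts $h_i:B_i\to X$ and assemble them into a single $h:\coprod_i B_i\to X$ by the universal property of the coproduct. Both identities $h\circ(\coprod_i f_i)=u$ and $g\circ h=v$ are then checked injection by injection, where they reduce to $h_if_i=u_i$ and $gh_i=v_i$. The pushout case is the one worth care. Let $f:A\to B$ be in ${}^{\boxslash}S$, let $a:A\to A'$ be arbitrary, and form the pushout, with structure maps $f':A'\to B'$ and $b:B\to B'$ so that $f'a=bf$. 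Given a square over $f'$ with top $u:A'\to X$ and bottom $v:B'\to Y$, build the square over $f$ with top $ua$ and bottom $vb$ (it commutes because $g(ua)=(gu)a=(vf')a=v(f'a)=v(bf)=(vb)f$), and choose a lift $h:B\to X$ with $hf=ua$ and $gh=vb$. Since $ua=hf$, the pair $(u,h)$ defines a cocone under $A$, so the universal property of the pushout yields a unique $h':B'\to X$ with $h'f'=u$ and $h'b=h$. The first identity is half of what is needed; for the second, note that $gh'$ and $v$ both restrict to $gu$ along $f'$ and to $gh=vb$ along $b$, and $(gu,gh)$ is a legitimate cocone because $guа=ghf$, so \emph{uniqueness} in the pushout's universal property forces $gh'=v$. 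Hence $f'\in{}^{\boxslash}S$.

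The only non-mechanical point in the whole argument is precisely this last move: one must not try to solve the square over $f'$ directly, but instead feed the lift $h$ obtained from $f$ back into the universal property of the pushout, and then invoke the \emph{uniqueness} clause of that property --- rather than a second lifting --- to recover the remaining commutativity $gh'=v$. Everything else is either a two-step relay of squares or a componentwise verification. With ${}^{\boxslash}S$ in hand, the claims about $\mathcal{L}$ follow by putting $S=\mathcal{R}$, and the claims about $\mathcal{R}$ follow by reading the result in $\mathcal{C}^{\mathrm{op}}$ with $S=\mathcal{L}$, which completes the proof.
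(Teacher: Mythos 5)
Your proof is correct, and it is essentially the argument the paper delegates to the literature: the paper offers no proof of its own, citing ([19], 14.1.8), and the standard proof there is exactly your scheme --- show that for an arbitrary class $S$ the class ${}^{\boxslash}S$ contains all isomorphisms and is closed under composition, retracts, coproducts and pushouts, then specialize to $S=\mathcal{R}$ via $\mathcal{L}={}^{\boxslash}\mathcal{R}$ and obtain the statements for $\mathcal{R}=\mathcal{L}^{\boxslash}$ by duality. Your diagram chases are all sound, including the one delicate step: in the pushout case you correctly solve the transported square against $f$, feed the lift $h$ together with $u$ back into the universal property to get $h'$ with $h'f'=u$ and $h'b=h$, and then recover $gh'=v$ from the \emph{uniqueness} clause rather than from a second lifting. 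Two cosmetic remarks: closure under retracts is not argued but quoted from the remark following the paper's definition of $\boxslash$ (that remark is itself stated without proof, though the verification is a routine chase, so this is an acceptable shortcut), and there is a stray non-ASCII character in the identity $gua=ghf$ near the end that should be cleaned up.
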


\begin{dfn}
A weak factorization system $(\mathcal{L},\mathcal{R})$ in the category $\mathcal{C}$ is a maximal lifting system such that any morphism in $\mathcal{C}$ can be factored as $g \circ f$ with $f \in \mathcal{L}$ and $g \in \mathcal{R}$.
\end{dfn}

The following is a well-known result for recognizing weak factorization systems (WFSs); for a proof, see ([19], 14.1.13).

\begin{lema}[Folklore]
If $(\mathcal{L},\mathcal{R})$ is a pair of classes of morphisms in a category $\mathcal{C}$ such that
\begin{enumerate}
    \item $f \boxslash g$ for all $f \in \mathcal{L}$ and $g \in \mathcal{R}$,
    \item all morphisms $f \in \mathcal{C}$ can be factored as $f_R \circ f_L$, where $f_R \in \mathcal{R}$ and $f_L \in \mathcal{L}$, and
    \item $\mathcal{L}$ and $\mathcal{R}$ are closed under retracts,
\end{enumerate}
\noindent then $(\mathcal{L},\mathcal{R})$ is a WFS.
\end{lema}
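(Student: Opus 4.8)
The plan is to verify the two clauses of the definition of a weak factorization system: that $(\mathcal{L},\mathcal{R})$ is a maximal lifting system, i.e.\ $\mathcal{L} = {}^{\boxslash}\mathcal{R}$ and $\mathcal{R} = \mathcal{L}^{\boxslash}$, and that every morphism of $\mathcal{C}$ factors as $g \circ f$ with $f \in \mathcal{L}$ and $g \in \mathcal{R}$. The latter is precisely hypothesis (2), so all the work lies in the two equalities of classes; since these are formally dual, I would prove $\mathcal{R} = \mathcal{L}^{\boxslash}$ in full and obtain $\mathcal{L} = {}^{\boxslash}\mathcal{R}$ by the mirror-image argument.

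One inclusion is free: by hypothesis (1), every $g \in \mathcal{R}$ satisfies $f \boxslash g$ for all $f \in \mathcal{L}$, which says exactly $g \in \mathcal{L}^{\boxslash}$, so $\mathcal{R} \subseteq \mathcal{L}^{\boxslash}$. The substantial inclusion $\mathcal{L}^{\boxslash} \subseteq \mathcal{R}$ is where I would run the standard \emph{retract argument}. Let $g \colon X \to Y$ lie in $\mathcal{L}^{\boxslash}$. By hypothesis (2), factor $g = g_R \circ g_L$ with $g_L \colon X \to Z$ in $\mathcal{L}$ and $g_R \colon Z \to Y$ in $\mathcal{R}$. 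Since $g_L \in \mathcal{L}$ and $g \in \mathcal{L}^{\boxslash}$, the commutative square with left edge $g_L$, right edge $g$, top edge $\mathrm{id}_X$ and bottom edge $g_R$ (it commutes because $g_R \circ g_L = g = g \circ \mathrm{id}_X$) admits a lift $h \colon Z \to X$ satisfying $h \circ g_L = \mathrm{id}_X$ and $g \circ h = g_R$.

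These two identities present $g$ as a retract of $g_R$: form the retract diagram whose top row is $X \xrightarrow{g_L} Z \xrightarrow{h} X$ (composite $\mathrm{id}_X$, by the first identity) and whose bottom row is $Y \xrightarrow{\mathrm{id}_Y} Y \xrightarrow{\mathrm{id}_Y} Y$, with left-hand column $g$, middle column $g_R$, right-hand column $g$; the left square commutes by $g_R \circ g_L = g$ and the right square by $g \circ h = g_R$. Since $g_R \in \mathcal{R}$ and $\mathcal{R}$ is closed under retracts by hypothesis (3), we get $g \in \mathcal{R}$, completing $\mathcal{R} = \mathcal{L}^{\boxslash}$. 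Dually: factor an $f \in {}^{\boxslash}\mathcal{R}$ as $f = f_R \circ f_L$ with $f_L \in \mathcal{L}$ and $f_R \in \mathcal{R}$; the lift in the square with left edge $f$, right edge $f_R$, top edge $f_L$ and bottom edge $\mathrm{id}$ exhibits $f$ as a retract of $f_L \in \mathcal{L}$, so $f \in \mathcal{L}$. Together with hypothesis (2) this is exactly the assertion that $(\mathcal{L},\mathcal{R})$ is a WFS.

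I do not expect a genuine obstacle here: the whole proof is the retract argument, and the only point requiring care is bookkeeping — getting the source and target of the lift $h$ right, matching the four edges of each lifting square to the given data, and verifying that the two non-trivial squares of each retract diagram commute (the other two squares, and the two row-composites, being immediate). It is also worth noting, as in the remark following the definition of the operators $S^{\boxslash}$ and ${}^{\boxslash}S$, that $\mathcal{L}^{\boxslash}$ and ${}^{\boxslash}\mathcal{R}$ are automatically retract-closed; that observation is not actually used, since retract-closure of $\mathcal{L}$ and $\mathcal{R}$ is assumed outright in hypothesis (3).
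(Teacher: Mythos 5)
Your proof is correct: the paper itself only cites May--Ponto (14.1.13) for this folklore lemma, and the argument given there is exactly the standard retract argument you carry out — hypothesis (1) gives $\mathcal{R} \subseteq \mathcal{L}^{\boxslash}$ and ${}^{\boxslash}\mathcal{R} \supseteq \mathcal{L}$, while the factorization plus lifting against one's own factor exhibits any $g \in \mathcal{L}^{\boxslash}$ (resp.\ $f \in {}^{\boxslash}\mathcal{R}$) as a retract of its $\mathcal{R}$-factor (resp.\ $\mathcal{L}$-factor), and hypothesis (3) closes the loop. Your bookkeeping of the lifting squares and retract diagrams is accurate, so there is nothing to add.
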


As an example of how lifting properties can classify properties of morphisms, we present the following characterization of retractions and sections.

\begin{dfn}
A morphism $r: A \rightarrow B$ in a category is called a retraction if it is possible to factorize the identity of $B$ as $\mathrm{id}_B = r \circ s$ for some morphism $s$. Dually, a morphism $s:A \rightarrow B$ is called a section if it is possible to factorize the identity of $A$ as $\mathrm{id}_A = r \circ s$ for some morphism $r$.
\end{dfn}

\begin{lema}
The class of retractions is exactly $\{\emptyset \rightarrow A \mid A \in \mathcal{C}\}^{\boxslash}$. Dually, the class of sections is exactly $^{\boxslash}\{A \rightarrow * \mid A \in \mathcal{C}\}$.
\end{lema}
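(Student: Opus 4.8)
The plan is to prove both halves by exhibiting explicit lifting diagrams and invoking the characterizations of lifting properties already set up. For the first claim, I would show the two inclusions separately. To see that every retraction $r:A\to B$ lies in $\{\emptyset\to A\mid A\in\mathcal C\}^{\boxslash}$, take any commutative square with left leg $\emptyset\to C$ and right leg $r:A\to B$; since $\emptyset$ is initial, the square amounts to a single map $B\to B$ that must be the identity (the bottom map), together with a given map $\emptyset\to A$ which is forced. Because $r$ is a retraction, choose $s$ with $r\circ s=\mathrm{id}_B$; then $s:B\to A$ is the desired lift, as $r\circ s=\mathrm{id}_B$ gives commutativity of the lower triangle and the upper triangle commutes trivially (domain $\emptyset$). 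Conversely, if $r:A\to B$ has the right lifting property against all $\emptyset\to C$, apply this in particular to $\emptyset\to B$: the square whose bottom edge is $\mathrm{id}_B$ and whose top edge is the unique map $\emptyset\to A$ has a lift $s:B\to A$ with $r\circ s=\mathrm{id}_B$, so $r$ is a retraction. This proves $\{\emptyset\to A\}^{\boxslash}$ is exactly the class of retractions.

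The second statement is the formal dual: I would run the same argument with arrows reversed, using that $*$ is terminal. Given a section $s:A\to B$, pick $r$ with $r\circ s=\mathrm{id}_A$; for any square with left leg $s$ and right leg some $C\to *$, the top edge $A\to C$ precomposed with $r$ gives a lift $B\to C$, and commutativity follows because the only constraint on the right-hand side lands in $*$. Conversely, testing the left lifting property of $s$ against $A\to *$ with top edge $\mathrm{id}_A$ produces the retraction $r$. Alternatively, and more cleanly, I could simply note that sections in $\mathcal C$ are precisely retractions in $\mathcal C^{\mathrm{op}}$ and that the lifting relation $f\boxslash g$ in $\mathcal C$ corresponds to $g^{\mathrm{op}}\boxslash f^{\mathrm{op}}$ in $\mathcal C^{\mathrm{op}}$, so that $^{\boxslash}\{A\to *\}$ in $\mathcal C$ is $\{\emptyset\to A\}^{\boxslash}$ in $\mathcal C^{\mathrm{op}}$, which by the first part is the class of retractions there, i.e.\ the class of sections in $\mathcal C$.

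I do not anticipate a serious obstacle here; the content is entirely formal. The one point requiring a little care is the bookkeeping in the lifting square when the corner object is initial (resp.\ terminal): one must observe that the commuting square degenerates, so that the existence of a lift is equivalent to the existence of a one-sided inverse, with no extra compatibility to check. Making that reduction precise — spelling out that the ``upper triangle'' condition is vacuous because it is a statement about maps out of $\emptyset$ (resp.\ into $*$) — is the only step worth writing out in detail, and it is what powers the equivalence in both directions.
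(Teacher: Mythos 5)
Your overall strategy is the right one (and the natural one; the paper itself states this lemma without giving a proof, so there is nothing to compare against beyond the standard argument): test retractions against the squares whose left leg has initial domain, recover the one-sided inverse from the square whose bottom edge is an identity, and dualize. However, the forward inclusion of your first half is misstated in a way that, taken literally, breaks the argument. A general square witnessing $(\emptyset\rightarrow C)\boxslash r$ has as its bottom edge an \emph{arbitrary} map $u:C\rightarrow B$; it is not true that ``the square amounts to a single map $B\rightarrow B$ that must be the identity.'' As written you only verify the lifting property for the single square with $C=B$ and $u=\mathrm{id}_B$, which is the square needed for the \emph{converse} direction, not for showing that every retraction lies in $\{\emptyset\rightarrow A\}^{\boxslash}$. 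The fix is immediate and is exactly the move you do make in the dual half: given $s$ with $r\circ s=\mathrm{id}_B$, the lift for bottom edge $u$ is $s\circ u$, the lower triangle commuting since $r\circ s\circ u=u$ and the upper triangle being vacuous because any two maps out of $\emptyset$ agree.

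Two smaller points. In the dual half, when you take the lift $h=(\text{top})\circ r$ for a square with left leg $s$ and right leg $C\rightarrow *$, the lower triangle is indeed trivial by terminality of $*$, but the upper triangle $h\circ s=\text{top}$ commutes because $r\circ s=\mathrm{id}_A$, not ``because the only constraint lands in $*$''; say so explicitly. Relatedly, your closing remark that ``the existence of a lift is equivalent to the existence of a one-sided inverse, with no extra compatibility to check'' is only accurate once phrased as: the identity square forces the one-sided inverse, and conversely the one-sided inverse produces lifts in \emph{all} test squares by composition with the arbitrary bottom (resp.\ top) edge. With these corrections the proof is complete; your alternative derivation of the second statement via $\mathcal{C}^{\mathrm{op}}$ is also fine, since sections in $\mathcal{C}$ are retractions in $\mathcal{C}^{\mathrm{op}}$ and $f\boxslash g$ in $\mathcal{C}$ corresponds to $g^{\mathrm{op}}\boxslash f^{\mathrm{op}}$.
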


\section{Retracts and Cores}

In this section, we introduce the notion of \emph{cores}.

\begin{dfn}
Let $\mathfrak{A}$ be a $\sigma$-structure. An endomorphism $f:\mathfrak{A} \rightarrow \mathfrak{A}$ is a retraction if it leaves its image fixed, in other words if $f(x)=x$ for all $x \in f[A]$. A substructure $\mathfrak{B}$ of $\mathfrak{A}$ is called a retract of $\mathfrak{A}$ if there exists a retraction of $\mathfrak{A}$ onto $\mathfrak{B}$; a retract is proper if it is a proper substructure.
\end{dfn}

\begin{lema}
If $\mathfrak{B}$ is a retract of $\mathfrak{A}$, then $\mathfrak{A}$ and $\mathfrak{B}$ are homomorphically equivalent.
\end{lema}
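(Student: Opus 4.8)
The plan is to exhibit homomorphisms in both directions between $\mathfrak{A}$ and $\mathfrak{B}$ directly from the definition of retract. By hypothesis there is a retraction $r:\mathfrak{A}\rightarrow\mathfrak{A}$ with $r[A]=B$ and $r(x)=x$ for all $x\in B$; in particular $\mathfrak{B}$ is a substructure of $\mathfrak{A}$. First I would observe that the inclusion $\iota:\mathfrak{B}\hookrightarrow\mathfrak{A}$ is a homomorphism: this is immediate from the definition of substructure, since every constant is preserved and every tuple of $\mathfrak{B}$ lying in a relation of $\mathfrak{B}$ lies, by definition of induced substructure, in the corresponding relation of $\mathfrak{A}$.

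Second, I would produce a homomorphism $\mathfrak{A}\rightarrow\mathfrak{B}$ by corestricting $r$. Since $r$ is an endomorphism of $\mathfrak{A}$ and its image is exactly the underlying set $B$ of the substructure $\mathfrak{B}$, the map $r':\mathfrak{A}\rightarrow\mathfrak{B}$ defined by $r'(x)=r(x)$ is well defined. It is a homomorphism: for each constant symbol $c$, $r'(c^{\mathfrak{A}})=r(c^{\mathfrak{A}})=c^{\mathfrak{A}}$, and since $c^{\mathfrak{A}}\in B$ we have $c^{\mathfrak{A}}=c^{\mathfrak{B}}$ (constants of the substructure agree with those of $\mathfrak{A}$); and for each $k$-ary $R$ and each $(a_1,\dots,a_k)\in R^{\mathfrak{A}}$, we have $(r(a_1),\dots,r(a_k))\in R^{\mathfrak{A}}$ because $r$ is a homomorphism, and this tuple lies in $B^k$, hence it lies in $R^{\mathfrak{A}}\cap B^k=R^{\mathfrak{B}}$. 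Thus $r'$ is a homomorphism $\mathfrak{A}\rightarrow\mathfrak{B}$.

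Having both $\iota:\mathfrak{B}\rightarrow\mathfrak{A}$ and $r':\mathfrak{A}\rightarrow\mathfrak{B}$, the two structures are homomorphically equivalent, which is exactly the claim. (As a sanity check one also notes $r'\circ\iota=\mathrm{id}_{\mathfrak{B}}$, since $r$ fixes $B$ pointwise, so in fact $\mathfrak{B}$ is even a homomorphic retract in the categorical sense, but homomorphic equivalence is all that is asserted.)

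There is essentially no obstacle here; the only point requiring a little care is the bookkeeping that the relations and constants of the \emph{induced} substructure $\mathfrak{B}$ are precisely the restrictions of those of $\mathfrak{A}$, so that both the inclusion and the corestricted retraction genuinely respect the vocabulary. Everything else is a direct unwinding of definitions.
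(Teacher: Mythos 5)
Your proof is correct and is essentially the argument the paper delegates to its citation of [11]: the inclusion $\mathfrak{B}\hookrightarrow\mathfrak{A}$ and the corestriction of the retraction to its image $\mathfrak{B}$ give homomorphisms in both directions, which is homomorphic equivalence. The bookkeeping point you flag (that $\mathfrak{B}$ carries the induced relations and constants, so the corestricted map really lands in $R^{\mathfrak{B}}=R^{\mathfrak{A}}\cap B^{k}$) is exactly the only step needing care, and you handle it correctly.
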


\begin{proof}
([11], p. 12).
\end{proof}

\begin{dfn}
A $\sigma$-structure $\mathfrak{C}$ is called a \emph{core} if it has no proper retracts. A retract $\mathfrak{C}$ of $\mathfrak{A}$ is called \emph{a core of $\mathfrak{A}$} if it is a core.
\end{dfn}

\begin{lema}[Characterisation of cores]
For a $\sigma$-structure $\mathfrak{C}$ the following conditions are equivalent.
\begin{enumerate}
    \item $\mathfrak{C}$ is a core (that is, $\mathfrak{C}$ has no proper retracts).
    \item $\mathfrak{C}$ is not homomorphic to any proper substructure of $\mathfrak{C}$.
    \item Every endomorphism of $\mathfrak{C}$ is an automorphism.
\end{enumerate}
\end{lema}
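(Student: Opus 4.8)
The plan is to prove the cycle of implications $(1)\Rightarrow(2)\Rightarrow(3)\Rightarrow(1)$, exploiting the finiteness of $\mathfrak{C}$ at two decisive points. The implication $(3)\Rightarrow(1)$ is immediate: if $\mathfrak{B}$ is a retract of $\mathfrak{C}$, then the witnessing retraction $r:\mathfrak{C}\to\mathfrak{C}$ is in particular an endomorphism, hence by $(3)$ an automorphism, hence surjective, so $\mathfrak{B}=r[\mathfrak{C}]=\mathfrak{C}$; thus $\mathfrak{C}$ has no proper retract.

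For $(1)\Rightarrow(2)$ I would argue contrapositively. Suppose there is a homomorphism $h:\mathfrak{C}\to\mathfrak{B}$ into a proper substructure $\mathfrak{B}\subsetneq\mathfrak{C}$, and let $g:\mathfrak{C}\to\mathfrak{C}$ be $h$ followed by the inclusion, so $g$ is an endomorphism with $g[\mathfrak{C}]\subseteq B\subsetneq C$. Since the monoid of all maps $C\to C$ is finite, the sequence $g,g^{2},g^{3},\dots$ is eventually periodic, say $g^{i}=g^{i+p}$ with $i,p\geq 1$; taking $n$ to be a multiple of $p$ with $n\geq i$ gives $g^{2n}=g^{n}$, so $e:=g^{n}$ is an idempotent endomorphism, which is exactly a retraction onto its image in the sense of the definition of retraction given above. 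That image satisfies $g^{n}[\mathfrak{C}]\subseteq g[\mathfrak{C}]\subsetneq\mathfrak{C}$, so $\mathfrak{C}$ has a proper retract, contradicting $(1)$.

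For $(2)\Rightarrow(3)$, let $f:\mathfrak{C}\to\mathfrak{C}$ be an endomorphism. Then $f$ is a homomorphism from $\mathfrak{C}$ onto the substructure induced on $f[C]$, so by $(2)$ this substructure cannot be proper, i.e.\ $f[C]=C$ and $f$ is surjective; since $C$ is finite, $f$ is therefore bijective. It remains to check that $f^{-1}$ is a homomorphism. For each $p$-ary relation symbol $R$, the bijection induced by $f$ on $C^{p}$ carries $R^{\mathfrak{C}}$ into $R^{\mathfrak{C}}$; as $R^{\mathfrak{C}}$ is finite and its image has the same cardinality, $f$ maps $R^{\mathfrak{C}}$ \emph{onto} $R^{\mathfrak{C}}$, whence $f^{-1}(R^{\mathfrak{C}})=R^{\mathfrak{C}}$; and $f^{-1}$ fixes the interpretation of each constant symbol because $f$ does. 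Hence $f^{-1}$ is a homomorphism and $f$ is an automorphism.

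Apart from this routine bookkeeping, the two steps that actually carry content are the existence of an idempotent power of an endomorphism of a finite structure (a finite transformation-monoid fact, false for infinite $\mathfrak{C}$) and the passage from a bijective endomorphism to an automorphism via the cardinality count on the interpreted relations (again using finiteness). I expect the idempotent-power step to be the main obstacle, since that is where one must be careful that the endomorphism produced is genuinely a retraction in the sense of the paper's definition and that its image is a genuinely proper substructure of $\mathfrak{C}$.
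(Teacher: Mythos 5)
Your proof is correct and complete: the cycle $(1)\Rightarrow(2)\Rightarrow(3)\Rightarrow(1)$ goes through, and the two finiteness steps (the idempotent power $g^{n}$ of an endomorphism, which is indeed a retraction onto a proper substructure in the sense of the paper's definition, and the cardinality argument showing a bijective endomorphism already maps each $R^{\mathfrak{C}}$ onto itself) are exactly where the content lies. The paper itself gives no argument, deferring to Foniok [11, p.~11], and your proof is essentially the standard one found there, so there is nothing to reconcile.
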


\begin{proof}
([11], p. 11).
\end{proof}

\begin{lema}
Let $\mathfrak{A}$ and $\mathfrak{B}$ be two $\sigma$-structures. If there exist surjective homomorphisms $f:\mathfrak{A} \rightarrow \mathfrak{B}$ and $g:\mathfrak{B} \rightarrow \mathfrak{A}$, then $\mathfrak{A}$ and $\mathfrak{B}$ are isomorphic.
\end{lema}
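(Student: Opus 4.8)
The plan is to exploit finiteness to upgrade the two given homomorphisms into a genuine isomorphism. First I would observe that, since every structure here is finite and $f\colon\mathfrak{A}\to\mathfrak{B}$ is surjective, $|A|\ge|B|$; symmetrically, $g\colon\mathfrak{B}\to\mathfrak{A}$ surjective gives $|B|\ge|A|$. Hence $|A|=|B|$, and a surjection between two finite sets of equal cardinality is a bijection, so both $f$ and $g$ are bijections of the underlying universes. Note this does \emph{not} yet say that $f$ or $g$ is an isomorphism: a bijective homomorphism of relational structures need not have a homomorphic inverse.

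Next I would show that $e := g\circ f\colon\mathfrak{A}\to\mathfrak{A}$ is an \emph{automorphism}. It is an endomorphism of $\mathfrak{A}$ and, being a composite of two set-bijections, a bijection of $A$. Since $A$ is finite, the endomorphism monoid of $\mathfrak{A}$ is finite, so the powers $e, e^2, e^3, \dots$ cannot all be distinct; choose $m > n \ge 1$ with $e^m = e^n$. As $e$ is a bijection of the set $A$ it is left-cancellable, whence $e^{\,m-n} = \mathrm{id}_A$. Writing $k = m-n \ge 1$, the map $e^{\,k-1}$ is then a homomorphism $\mathfrak{A}\to\mathfrak{A}$ that is a two-sided inverse of $e$, so $e \in \mathrm{Aut}(\mathfrak{A})$; call it $\alpha$. (Equivalently, one may invoke the standard fact that a bijective endomorphism of a finite relational structure is an automorphism, which underlies part (3) of the characterisation of cores above.) The same argument shows $f\circ g \in \mathrm{Aut}(\mathfrak{B})$, though only the former will be needed.

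Finally I would conclude. From $g\circ f = \alpha$ we get $\alpha^{-1}\circ g\circ f = \mathrm{id}_A$, so $\alpha^{-1}\circ g\colon\mathfrak{B}\to\mathfrak{A}$, a composite of homomorphisms, is a homomorphic left inverse of $f$. Since $f$ is a bijection of sets, its left inverse is automatically its two-sided inverse, hence $f^{-1} = \alpha^{-1}\circ g$ is a homomorphism. Thus $f$ is a bijective homomorphism whose inverse is also a homomorphism, i.e. an isomorphism, and $\mathfrak{A}\cong\mathfrak{B}$. The only real subtlety — and the step where finiteness is indispensable — is the middle one: promoting the bijective endomorphism $g\circ f$ to an automorphism, so that $g$ can be ``inverted'' against $f$; the rest is bookkeeping.
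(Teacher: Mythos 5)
Your argument is correct and complete: finiteness plus the two surjections forces $f$ and $g$ to be bijections, the pigeonhole argument on powers of $g\circ f$ legitimately upgrades that bijective endomorphism to an automorphism $\alpha$ (with $\alpha^{-1}=\left(g\circ f\right)^{k-1}$ a homomorphism), and then $f^{-1}=\alpha^{-1}\circ g$ is a homomorphism, so $f$ is an isomorphism in the paper's sense. Note that the paper itself gives no proof of this lemma, only a citation to Foniok's thesis ([11], p.~12), so there is nothing in-text to compare against; your write-up is the standard self-contained argument one would expect there. A mildly different but equally short route, worth knowing, avoids the automorphism step: once $f$ and $g$ are known to be bijections, each relation satisfies $|R_i^{\mathfrak{A}}|\le|R_i^{\mathfrak{B}}|\le|R_i^{\mathfrak{A}}|$ (injectivity of $f$ and $g$ on tuples), so $f$ maps $R_i^{\mathfrak{A}}$ bijectively onto $R_i^{\mathfrak{B}}$, hence $f^{-1}$ is a homomorphism directly; both proofs use finiteness in an essential and equivalent way.
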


\begin{proof}
([11], p. 12).
\end{proof}

\begin{lema}
Let $\mathfrak{C}$ and $\mathfrak{C}'$ be two cores. If $\mathfrak{C}$ and $\mathfrak{C}'$ are homomorphically equivalent, they are isomorphic.
\end{lema}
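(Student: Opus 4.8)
The plan is to reduce the statement to Lemma~30 (mutual surjective homomorphisms between $\sigma$-structures force an isomorphism) by using the core hypothesis to upgrade arbitrary homomorphisms into surjective ones. First I would unpack the hypothesis: since $\mathfrak{C}$ and $\mathfrak{C}'$ are homomorphically equivalent, fix homomorphisms $f:\mathfrak{C}\to\mathfrak{C}'$ and $g:\mathfrak{C}'\to\mathfrak{C}$. Then $g\circ f:\mathfrak{C}\to\mathfrak{C}$ is an endomorphism of $\mathfrak{C}$; because $\mathfrak{C}$ is a core, the Characterisation of cores lemma (condition (3)) tells us that $g\circ f$ is an automorphism of $\mathfrak{C}$, in particular a surjection. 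Symmetrically, $f\circ g:\mathfrak{C}'\to\mathfrak{C}'$ is an endomorphism of the core $\mathfrak{C}'$, hence an automorphism, in particular a surjection.

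Next I would propagate surjectivity to $f$ and $g$ individually. Since $g\circ f$ is surjective onto $C$, the map $g$ must be surjective onto $C$; since $f\circ g$ is surjective onto $C'$, the map $f$ must be surjective onto $C'$. Thus $f:\mathfrak{C}\to\mathfrak{C}'$ and $g:\mathfrak{C}'\to\mathfrak{C}$ are both surjective homomorphisms, and Lemma~30 applies directly to conclude $\mathfrak{C}\cong\mathfrak{C}'$.

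Alternatively — and this avoids invoking Lemma~30 — one can argue directly that $f$ itself is an isomorphism: write $\alpha=g\circ f$, an automorphism of $\mathfrak{C}$. From $\alpha$ injective we get $f$ injective, and we already have $f$ surjective, so $f$ is a bijection; moreover from $g\circ f=\alpha$ we obtain $f^{-1}=\alpha^{-1}\circ g$, which is a composite of homomorphisms (the automorphism $\alpha^{-1}$ and the homomorphism $g$), hence a homomorphism. So $f$ is a bijective homomorphism with homomorphic inverse, i.e.\ an isomorphism.

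There is essentially no hard step here: the only thing to be careful about is the direction in which surjectivity is inherited (from $g\circ f$ surjective one gets $g$ surjective, not $f$), and the observation — which uses finiteness only insofar as the Characterisation of cores lemma does — that an endomorphism of a core is forced to be an automorphism. Everything else is a routine diagram chase, so I would keep the write-up to the few lines sketched above.
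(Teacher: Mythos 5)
Your proposal is correct and follows essentially the same route as the argument the paper defers to its reference [11]: the composites $g\circ f$ and $f\circ g$ are endomorphisms of cores, hence automorphisms by the characterisation lemma, which forces $f$ and $g$ to be surjective so that the preceding lemma on mutual surjective homomorphisms (Lemma 23 in this paper's numbering, not Lemma 30) gives $\mathfrak{C}\cong\mathfrak{C}'$. Your alternative direct argument that $f$ itself is an isomorphism, via $f^{-1}=\alpha^{-1}\circ g$, is also sound.
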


\begin{proof}
([11], p. 12).
\end{proof}

\begin{pps}
Every $\sigma$-structure $\mathfrak{A}$ has a unique core $\mathfrak{C}$ (up to isomorphism). Moreover, $\mathfrak{C}$ is the unique core to which $\mathfrak{A}$ is homomorphically equivalent.
\end{pps}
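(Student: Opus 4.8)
The plan is to obtain existence by a minimal-cardinality argument and then to reduce all the uniqueness claims to the two preceding lemmas about homomorphic equivalence of cores.

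For existence, I would use that every $\sigma$-structure here is finite. Consider the collection of all retracts of $\mathfrak{A}$; it is nonempty since $\mathfrak{A}$ is a retract of itself via $\mathrm{id}_{\mathfrak{A}}$. Choose a retract $\mathfrak{C}$ whose universe $C$ has minimum cardinality. I claim $\mathfrak{C}$ is a core. The key auxiliary fact is that being a retract is transitive: if $f:\mathfrak{A}\to\mathfrak{A}$ is a retraction with image the substructure $\mathfrak{C}$ (so $f$ fixes $C$ pointwise), and $g:\mathfrak{C}\to\mathfrak{C}$ is a retraction onto a substructure $\mathfrak{D}$ of $\mathfrak{C}$ (so $g$ fixes $D$ pointwise), then, viewing $f$ as a homomorphism $\mathfrak{A}\to\mathfrak{C}$ and post-composing the inclusion $\mathfrak{C}\hookrightarrow\mathfrak{A}$, the composite $g\circ f$ is an endomorphism of $\mathfrak{A}$ with image $D$ which fixes $D$ pointwise (for $x\in D\subseteq C$ we get $f(x)=x$ and then $g(f(x))=g(x)=x$); hence $\mathfrak{D}$ is a retract of $\mathfrak{A}$. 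Consequently, a proper retract $\mathfrak{D}$ of $\mathfrak{C}$ would be a retract of $\mathfrak{A}$ with $|D|<|C|$, contradicting minimality. So $\mathfrak{C}$ has no proper retracts, i.e.\ it is a core, and by construction it is a core of $\mathfrak{A}$; by the lemma that a retract is homomorphically equivalent to the ambient structure, $\mathfrak{A}$ and $\mathfrak{C}$ are homomorphically equivalent.

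For uniqueness, suppose $\mathfrak{C}$ and $\mathfrak{C}'$ are both retracts of $\mathfrak{A}$ that happen to be cores. By the same retract lemma each is homomorphically equivalent to $\mathfrak{A}$, hence they are homomorphically equivalent to each other; since both are cores, the lemma that homomorphically equivalent cores are isomorphic gives $\mathfrak{C}\cong\mathfrak{C}'$. For the ``moreover'' clause, let $\mathfrak{D}$ be any core with $\mathfrak{A}$ and $\mathfrak{D}$ homomorphically equivalent. Composing the homomorphisms $\mathfrak{A}\to\mathfrak{D}$, $\mathfrak{D}\to\mathfrak{A}$ with those witnessing the homomorphic equivalence of $\mathfrak{A}$ and the core $\mathfrak{C}$ produced above shows $\mathfrak{C}$ and $\mathfrak{D}$ are homomorphically equivalent; both are cores, so $\mathfrak{D}\cong\mathfrak{C}$. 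Thus $\mathfrak{C}$ is, up to isomorphism, the unique core to which $\mathfrak{A}$ is homomorphically equivalent, which also subsumes the uniqueness of the core of $\mathfrak{A}$.

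I expect the only genuine subtlety to be the transitivity of the retract relation invoked in the existence step; once that is isolated, the rest is a direct assembly of the preceding lemmas. It is also worth flagging that the minimal-cardinality choice is exactly where the standing finiteness assumption on structures is used — in the general (possibly infinite) setting existence of a core would require a different argument.
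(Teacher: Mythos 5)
Your proof is correct, and it is essentially the standard argument that the paper itself does not spell out but simply defers to Foniok [11, p.~12]: existence of a core of $\mathfrak{A}$ by choosing a retract of minimal cardinality (which is where finiteness enters, exactly as you flag), with transitivity of the retract relation showing this minimal retract has no proper retracts, and all uniqueness claims reduced to the two preceding lemmas (a retract is homomorphically equivalent to the ambient structure; homomorphically equivalent cores are isomorphic). The only convention worth noting is that substructures/retracts are taken with the induced structure, so that the corestriction $f:\mathfrak{A}\to\mathfrak{C}$ of a retraction is indeed a homomorphism, which your composition step tacitly uses.
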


\begin{proof}
([11], p. 12).
\end{proof}

\begin{cor}
A $\sigma$-structure $\mathfrak{C}$ is a core if and only if it is not homomorphically equivalent to a $\sigma$-structure with fewer vertices.
\end{cor}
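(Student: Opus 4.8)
The plan is to prove both implications directly from three facts already available: the characterisation of cores (every endomorphism of a core is an automorphism), the lemma that a retract is homomorphically equivalent to the ambient structure, and a trivial counting argument. The Proposition on uniqueness of cores is not strictly needed, although it provides an alternative route which I will mention at the end.

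For the forward direction I would argue by a size comparison. Assume $\mathfrak{C}$ is a core and that there exist homomorphisms $h : \mathfrak{C} \to \mathfrak{D}$ and $g : \mathfrak{D} \to \mathfrak{C}$ for some $\sigma$-structure $\mathfrak{D}$ (i.e. $\mathfrak{C}$ and $\mathfrak{D}$ are homomorphically equivalent). Then $g \circ h$ is an endomorphism of $\mathfrak{C}$, hence, since $\mathfrak{C}$ is a core, an automorphism, in particular a bijection of $C$. Being surjective, $g \circ h$ forces $g$ itself to be surjective, so $|C| \le |D|$. Thus a core cannot be homomorphically equivalent to any $\sigma$-structure with strictly fewer vertices.

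For the converse I would use the contrapositive: suppose $\mathfrak{C}$ is not a core. Then by definition it has a proper retract $\mathfrak{B}$, which is a proper substructure, so $|B| < |C|$; and by the lemma on retracts, $\mathfrak{B}$ and $\mathfrak{C}$ are homomorphically equivalent. Hence $\mathfrak{C}$ is homomorphically equivalent to a $\sigma$-structure with fewer vertices, which is exactly the contrapositive of the claim.

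I do not expect a genuine obstacle here: the statement is a short consequence of results already in hand. The only point meriting a second's attention is in the forward direction, where one should pass through the composite $g \circ h$ (rather than $h \circ g$) so as to land inside the core $\mathfrak{C}$, where the property "every endomorphism is an automorphism" applies; over the other composite $\mathfrak{D}$ need not be a core. An equivalent argument routes through the Proposition: any $\mathfrak{D}$ homomorphically equivalent to the core $\mathfrak{C}$ has the unique core of $\mathfrak{D}$ (a retract, hence a substructure of $\mathfrak{D}$) isomorphic to $\mathfrak{C}$, whence $|C| \le |D|$ again.
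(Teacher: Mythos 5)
Your proof is correct. Both directions are sound: in the forward direction the composite $g \circ h$ is an endomorphism of the core $\mathfrak{C}$, hence an automorphism by the characterisation lemma, and its surjectivity forces $g:\mathfrak{D}\to\mathfrak{C}$ to be surjective, giving $|C|\le|D|$; in the converse the contrapositive via a proper retract together with the lemma that a retract is homomorphically equivalent to the ambient structure does exactly what is needed. Note that the paper itself does not argue this corollary at all — it simply cites Foniok's thesis — so your write-up supplies a self-contained elementary derivation from the two lemmas already stated in the paper (the retract lemma and the characterisation of cores), which is arguably preferable to the route through uniqueness of the core that the cited reference (and your closing remark) suggests; your observation that one must use $g\circ h$ rather than $h\circ g$ is exactly the right point of care.
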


\begin{proof}
([11], p. 12).
\end{proof}

\subsection{Structures and Homomorphisms over a Set X}

Now we will see one more characterization of cores, namely, when referring to a given subset $X$ of the universe $A$ of a given structure $\mathfrak{A}$. This characterization will be important when dealing with definitions of $k$-homomorphisms and $k$-cores. Here I will follow [21].

\begin{dfn}
Let $X$ be an arbitrary set. We call a structure $\mathfrak{A}$ whose universe includes $X$ a \emph{structure over $X$}. For structures $\mathfrak{A}$ and $\mathfrak{B}$ over $X$, we call a homomorphism from $\mathfrak{A}$ to $\mathfrak{B}$ which fixes $X$ pointwise a \emph{homomorphism over $X$}. We write $\mathfrak{A} \rightarrow_X \mathfrak{B}$ if there exists a homomorphism
from $\mathfrak{A}$ to $\mathfrak{B}$ over $X$. We say $\mathfrak{A}$ and $\mathfrak{B}$ are homomorphically equivalent over $X$, and we write
$\mathfrak{A} \rightleftarrows_X \mathfrak{B}$, if $\mathfrak{A} \rightarrow_X \mathfrak{B}$ and $\mathfrak{B} \rightarrow_X \mathfrak{A}$. We say $\mathfrak{A}$ and $\mathfrak{B}$ are isomorphic over $X$, and we write
$\mathfrak{A} \cong_X \mathfrak{B}$, if there exist homomorphisms $f:\mathfrak{A} \rightarrow_X \mathfrak{B}$ and $g:\mathfrak{B} \rightarrow_X \mathfrak{A}$ such that $g \circ f = \mathrm{id}_A$
and $f \circ g = \mathrm{id}_B$; in this case, we say $f$ and $g$ are \emph{isomorphisms over $X$}.
\end{dfn}

By default, graphs are simple (i.e., undirected and without self-loops).


For a subset $X \subseteq A$, let $\mathcal{G}(\mathfrak{A}) \backslash X$ denote the induced subgraph of $\mathcal{G}(\mathfrak{A})$ with vertex set $A \backslash X$.

The tree-depth $\mathrm{td}_X(\mathfrak{A})$ of a finite structure $\mathfrak{A}$ over a subset $X \subseteq A$ is defined as the tree-depth of the Gaifman graph of $\mathfrak{A}$ over $X$: $\mathrm{td}_X(\mathfrak{A}) = \mathrm{td}(\mathcal{G}(\mathfrak{A}) \backslash X)$.

Suppose $\mathfrak{B}$ is a substructure of $\mathfrak{A}$. Homomorphisms $\mathfrak{A} \rightarrow_B \mathfrak{B}$ are called retractions.

A structure $\mathfrak{A}$ is a core over a subset $X \subseteq A$ if every homomorphism $\mathfrak{A} \rightarrow_X \mathfrak{A}$ is an automorphism.

\begin{lema}
Let $\mathfrak{A}$ be a finite structure and let $X \subseteq A$.
\begin{enumerate}
    \item $\mathfrak{A}$ is a core over $X$ if, and only if, it has no proper retract over X. (i.e., $\mathfrak{A}$ is a retract of $\mathfrak{B} \Rightarrow A = B$ or $X \not\subseteq B$).
    \item $\mathfrak{A}$ has a retract which is a core over $X$. Moreover, if $\mathfrak{A}$ is a retract of $\mathfrak{B}_1$ and $\mathfrak{A}$ is a retract of $\mathfrak{B}_2$ such that both $\mathfrak{B}_1$ and $\mathfrak{B}_2$ are cores over $X$, then $\mathfrak{B}_1 \cong_X \mathfrak{B}_2$.
\end{enumerate}
\end{lema}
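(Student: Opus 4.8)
The plan is to prove the two statements of Lemma 23 in turn, transporting the classical theory of cores (Lemmas 17--21 and Proposition 3) to the relative setting ``over $X$'' by treating a structure $\mathfrak{A}$ over $X$ as a $\sigma$-structure equipped with distinguished elements, much as $N_r^{\mathfrak{A}}(\vec{a})$ is recorded as a $\sigma_n$-structure. Concretely, fix an enumeration of $X$ and regard a structure over $X$ as a $\sigma_{|X|}$-structure (when $X$ is finite); homomorphisms over $X$ are then exactly $\sigma_{|X|}$-homomorphisms, and ``core over $X$'' means ``core as a $\sigma_{|X|}$-structure''. With this dictionary, part 2 is essentially Proposition 3 and part 1 is essentially Lemma 18 applied in $\mathbf{STRUCT}[\sigma_{|X|}]$. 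I would, however, give the argument directly so as not to lean on a finiteness assumption that the statement of the lemma does make ($\mathfrak{A}$ is finite).

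For part 1, I would prove both directions. ($\Leftarrow$) Suppose $\mathfrak{A}$ is not a core over $X$, so there is a homomorphism $f:\mathfrak{A}\to_X\mathfrak{A}$ that is not an automorphism. Since $\mathfrak{A}$ is finite, some power $f^m$ is idempotent (the endomorphism monoid is finite, so the sequence $f,f^2,f^3,\dots$ is eventually periodic and a suitable power $e=f^m$ satisfies $e\circ e=e$); $e$ fixes $X$ pointwise and $e[A]\subsetneq A$ because $f$, hence $e$, is not surjective. Then $e$ is a retraction of $\mathfrak{A}$ onto the proper substructure $\mathfrak{B}$ induced on $e[A]$, and $X\subseteq e[A]=B$ since $e$ fixes $X$; so $\mathfrak{A}$ has a proper retract over $X$. ($\Rightarrow$) Conversely, if $r:\mathfrak{A}\to_X\mathfrak{A}$ is a retraction onto a proper substructure $\mathfrak{B}$ with $X\subseteq B$, then $r$, viewed as an endomorphism fixing $X$, is not surjective, hence not an automorphism; so $\mathfrak{A}$ is not a core over $X$. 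The parenthetical reformulation follows since ``$\mathfrak{A}$ is a retract of $\mathfrak{B}$'' here means $\mathfrak{A}$ is a substructure of $\mathfrak{B}$ with a retraction $\mathfrak{B}\to_{A}\mathfrak{A}$, and having a proper retract over $X$ is precisely the failure of the displayed implication.

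For part 2, existence: iterate retractions. If $\mathfrak{A}$ is not already a core over $X$, part 1 gives a proper retract $\mathfrak{A}_1\lneq\mathfrak{A}$ over $X$; retracts compose, and a retract-over-$X$ of a retract-over-$X$ is again one, so continuing produces a strictly decreasing chain of substructures, which must terminate (finiteness of $A$) in a retract $\mathfrak{C}$ of $\mathfrak{A}$ over $X$ that is a core over $X$. Uniqueness: suppose $\mathfrak{A}$ is a retract over $X$ of cores-over-$X$ $\mathfrak{B}_1$ and $\mathfrak{B}_2$ --- more precisely, $\mathfrak{A}$ is (isomorphic over $X$ to) a retract of each $\mathfrak{B}_i$ via retractions $r_i:\mathfrak{B}_i\to_X\mathfrak{A}$ with sections $s_i:\mathfrak{A}\hookrightarrow_X\mathfrak{B}_i$. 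Then by Lemma 16 (retract $\Rightarrow$ homomorphic equivalence, applied over $X$) $\mathfrak{B}_1\rightleftarrows_X\mathfrak{A}\rightleftarrows_X\mathfrak{B}_2$, so $\mathfrak{B}_1$ and $\mathfrak{B}_2$ are homomorphically equivalent over $X$. Now apply the over-$X$ analogue of Lemma 20: two cores that are homomorphically equivalent over $X$ are isomorphic over $X$. I would prove this analogue inline: composing $\mathfrak{B}_1\to_X\mathfrak{B}_2\to_X\mathfrak{B}_1$ gives an endomorphism of $\mathfrak{B}_1$ over $X$, which is an automorphism because $\mathfrak{B}_1$ is a core over $X$; symmetrically for $\mathfrak{B}_2$; hence both composites are isomorphisms over $X$, forcing $\mathfrak{B}_1\to_X\mathfrak{B}_2$ to be an isomorphism over $X$ (it is injective and surjective with the inverse a homomorphism fixing $X$), i.e.\ $\mathfrak{B}_1\cong_X\mathfrak{B}_2$.

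The only real subtlety --- the ``main obstacle'' --- is bookkeeping the pointwise-$X$ condition through each classical step: one must check that idempotent powers, the terminating chain, and all composites in the uniqueness argument stay in the category of homomorphisms over $X$ (equivalently, $\sigma_{|X|}$-homomorphisms). Each check is immediate once phrased correctly, and the cleanest exposition is simply to note that everything is Lemmas 17--21 and Proposition 3 reread in $\mathbf{STRUCT}[\sigma_{|X|}]$ when $X$ is finite, and to run the short direct argument above for general $X$.
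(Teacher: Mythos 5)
Your proof is correct, and it supplies an argument that the paper itself omits: this lemma is stated without any proof, being transcribed from [21] (which the surrounding subsection explicitly follows), and your argument is exactly the standard core-theoretic one relativized over $X$ --- finiteness yields an idempotent power of a non-automorphism endomorphism over $X$ and the termination of the chain of proper retracts, and uniqueness follows from the over-$X$ version of ``homomorphically equivalent cores are isomorphic'', which you correctly re-prove inline. Three small remarks. (i) The step ``$f$ is not an automorphism, hence not surjective'' silently uses that a surjective endomorphism of a finite structure is an automorphism (surjectivity gives bijectivity by finiteness, and then $f^{n}=\mathrm{id}$ for some $n$, so $f^{-1}=f^{n-1}$ is a homomorphism); spell this out, since it is the one remaining place where finiteness enters besides the idempotent power and the terminating chain. (ii) Your cross-references do not match this paper's numbering: ``retract implies homomorphic equivalence'' is Lemma 21, ``homomorphically equivalent cores are isomorphic'' is Lemma 24, and the unique-core statement is Proposition 5; this is harmless because you re-derive the over-$X$ analogues you need. (iii) As printed, the parenthetical in part 1 and the ``Moreover'' clause of part 2 have the retract relation reversed relative to the intended statement of [21]: read literally, ``$\mathfrak{A}$ is a retract of $\mathfrak{B} \Rightarrow A=B$ or $X \not\subseteq B$'' fails for cores over $X$, since a core $\mathfrak{A}$ over $X$ is a proper retract over $X$ of $\mathfrak{A}\sqcup\mathfrak{C}$ for any nonempty $\mathfrak{C}$ admitting a homomorphism to $\mathfrak{A}$. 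Your uniqueness argument only uses that each $\mathfrak{B}_i$ is a core over $X$ that is homomorphically equivalent to $\mathfrak{A}$ over $X$, so it establishes the conclusion under either reading (the literal one, or the intended one in which $\mathfrak{B}_1,\mathfrak{B}_2$ are retracts of $\mathfrak{A}$), which is the right way to handle the misstatement.
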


\begin{dfn}
For every finite set $X$, we fix some set $\mathscr{C}_X$ of finite cores over $X$ containing exactly one representative from every $\cong_X$-equivalence class of finite structures. Since $\mathscr{C}_X$ contains only finite structures, every which is unique up to isomorphism over $X$, it follows that that $\mathscr{C}_X$ is a countably infinite set. We will call members of $\mathscr{C}_X$ canonical cores over $X$.
\end{dfn}

\begin{cor}
For every finite structure $\mathfrak{A}$ and $X \subseteq A$, there exists a unique $\mathfrak{C} \in \mathscr{C}_X$ such that $\mathfrak{A} \rightleftarrows_X \mathfrak{C}$. Moreover, $\mathrm{td}_X(\mathfrak{C}) \leq \mathrm{td}_X(\mathfrak{A})$ and every homomorphism $h : \mathfrak{C} \rightarrow_X \mathfrak{A}$ is injective and has the property that $\mathfrak{A}$ is a retract of $h(\mathfrak{C})$. 
\end{cor}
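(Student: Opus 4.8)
The plan is to derive the statement from Lemma 25 (the $X$-relative analogue of the core existence/uniqueness theorem) together with Definition 23 (the fixed choice $\mathscr{C}_X$ of canonical cores) and the basic facts about cores over $X$ collected above. First I would establish existence and uniqueness of $\mathfrak{C}$. By Lemma 25(2), $\mathfrak{A}$ has a retract $\mathfrak{B}$ which is a core over $X$; since a retract of $\mathfrak{A}$ is homomorphically equivalent to $\mathfrak{A}$ (the $X$-relative version of Lemma 22, via the inclusion and the retraction, both of which fix $X$ pointwise), we get $\mathfrak{A} \rightleftarrows_X \mathfrak{B}$. Now $\mathfrak{B}$ lies in some $\cong_X$-equivalence class, and $\mathscr{C}_X$ contains exactly one representative $\mathfrak{C}$ of that class, so $\mathfrak{C} \cong_X \mathfrak{B} \rightleftarrows_X \mathfrak{A}$, whence $\mathfrak{A} \rightleftarrows_X \mathfrak{C}$. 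For uniqueness, if $\mathfrak{C}, \mathfrak{C}' \in \mathscr{C}_X$ both satisfy $\mathfrak{A} \rightleftarrows_X \mathfrak{C}$ and $\mathfrak{A} \rightleftarrows_X \mathfrak{C}'$, then $\mathfrak{C} \rightleftarrows_X \mathfrak{C}'$; both are cores over $X$, so by the $X$-relative analogue of Lemma 21 (two homomorphically equivalent cores are isomorphic — here one composes the two homomorphisms over $X$ to get an endomorphism, which must be an automorphism since $\mathfrak{C}$ is a core over $X$, forcing both maps to be isomorphisms over $X$) we get $\mathfrak{C} \cong_X \mathfrak{C}'$; since $\mathscr{C}_X$ has only one representative per class, $\mathfrak{C} = \mathfrak{C}'$.

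Next I would handle the tree-depth bound. Fix homomorphisms $f : \mathfrak{A} \rightarrow_X \mathfrak{C}$ and $g : \mathfrak{C} \rightarrow_X \mathfrak{A}$ over $X$. The composite $g \circ f : \mathfrak{A} \rightarrow_X \mathfrak{A}$ need not be idempotent, but since $\mathfrak{A}$ is finite some power $(g \circ f)^m$ is idempotent, hence a retraction of $\mathfrak{A}$ onto a substructure $\mathfrak{A}' \subseteq \mathfrak{A}$ with $X \subseteq A'$ (as $g \circ f$ fixes $X$). The image of $f$ restricted appropriately identifies $\mathfrak{C}$ with a structure $\cong_X$-equivalent to $\mathfrak{A}'$; more carefully, since $\mathfrak{C}$ is a core over $X$ and $\mathfrak{C} \rightleftarrows_X \mathfrak{A}'$ (both being $\rightleftarrows_X \mathfrak{A}$), the uniqueness part of Lemma 25(2) gives $\mathfrak{C} \cong_X \mathfrak{A}'$. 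Because $\mathfrak{A}'$ is an induced substructure of $\mathfrak{A}$ containing $X$, its Gaifman graph minus $X$ is an induced subgraph of $\mathcal{G}(\mathfrak{A}) \setminus X$, and tree-depth is monotone under taking subgraphs; hence $\mathrm{td}_X(\mathfrak{C}) = \mathrm{td}_X(\mathfrak{A}') \le \mathrm{td}_X(\mathfrak{A})$.

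Finally, for the last clause, let $h : \mathfrak{C} \rightarrow_X \mathfrak{A}$ be any homomorphism over $X$. Compose with $f : \mathfrak{A} \rightarrow_X \mathfrak{C}$ from the previous paragraph to get $f \circ h : \mathfrak{C} \rightarrow_X \mathfrak{C}$, an endomorphism over $X$; since $\mathfrak{C}$ is a core over $X$ it is an automorphism of $\mathfrak{C}$. In particular $f \circ h$ is injective, so $h$ is injective; thus $h$ is an isomorphism from $\mathfrak{C}$ onto the substructure $h(\mathfrak{C}) \subseteq \mathfrak{A}$. It remains to exhibit a retraction of $\mathfrak{A}$ onto $h(\mathfrak{C})$: take $h \circ (f \circ h)^{-1} \circ f : \mathfrak{A} \rightarrow \mathfrak{A}$. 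This is a homomorphism fixing $X$, its image is $h(\mathfrak{C})$, and on $h(\mathfrak{C})$ it acts as the identity (since on $h(\mathfrak{C})$ we have $f$ followed by $(f\circ h)^{-1}$ followed by $h$, which unwinds to the identity on $h(\mathfrak{C})$); hence it is a retraction of $\mathfrak{A}$ onto $h(\mathfrak{C})$, as required.

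I expect the main obstacle to be the tree-depth bound: one has to pass from the possibly non-idempotent composite $g \circ f$ to an honest retract of $\mathfrak{A}$ that is genuinely $\cong_X$-equivalent to $\mathfrak{C}$, and then invoke monotonicity of tree-depth under induced subgraphs; the identification of this retract with the canonical core via the uniqueness clause of Lemma 25(2) is the step that needs to be stated carefully. The rest is a routine application of the core machinery over $X$.
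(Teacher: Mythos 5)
The paper itself gives no proof of this corollary (like the surrounding lemmas it is imported from Rossman [21]), so there is no in-paper argument to compare against; judged on its own terms, your existence/uniqueness argument and your final construction of the retraction $h\circ(f\circ h)^{-1}\circ f$ of $\mathfrak{A}$ onto $h(\mathfrak{C})$ are correct, and you sensibly read the clause ``$\mathfrak{A}$ is a retract of $h(\mathfrak{C})$'' in the evidently intended direction, consistent with the reversed phrasing already present in Lemma 25.

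The tree-depth step, however, contains a genuine gap. You identify $\mathfrak{C}\cong_X\mathfrak{A}'$, where $\mathfrak{A}'$ is the image of an idempotent power of $g\circ f$, by appealing to ``the uniqueness part of Lemma 25(2)''. That uniqueness clause applies to two retracts of $\mathfrak{A}$ which are both cores over $X$; at that point of your argument $\mathfrak{A}'$ is only known to be a retract of $\mathfrak{A}$, not a core over $X$, and homomorphic equivalence over $X$ with a core over $X$ does not by itself yield $\cong_X$ (take $X=\emptyset$, $\mathfrak{C}$ a single edge $K_2$, $\mathfrak{A}'$ the path $P_3$: they are homomorphically equivalent, $K_2$ is a core, but they are not isomorphic). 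The claim is true, but it needs an extra argument you did not supply: since $A'\subseteq (g\circ f)(A)\subseteq g(C)$ one has $|A'|\le |C|$, while the core of $\mathfrak{A}'$ over $X$ is a retract of $\mathfrak{A}'$ isomorphic over $X$ to $\mathfrak{C}$ (by the same two-cores argument as in your uniqueness paragraph), so $|A'|\ge |C|$; equality then forces $\mathfrak{A}'$ to coincide with its core over $X$, hence $\mathfrak{A}'\cong_X\mathfrak{C}$. Better still, the detour through $\mathfrak{A}'$ is unnecessary: the retract $\mathfrak{B}$ of $\mathfrak{A}$ produced by Lemma 25(2) in your first paragraph is already an induced substructure of $\mathfrak{A}$ containing $X$ with $\mathfrak{B}\cong_X\mathfrak{C}$, so monotonicity of tree-depth under induced subgraphs gives $\mathrm{td}_X(\mathfrak{C})=\mathrm{td}_X(\mathfrak{B})\le\mathrm{td}_X(\mathfrak{A})$ directly, which is essentially the route in [21]. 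A last nitpick: right after injectivity you assert that $h$ is an isomorphism onto the induced substructure $h(\mathfrak{C})$; injectivity alone does not give this (the induced substructure may carry extra tuples), though your retraction construction does ultimately yield it, and the corollary does not require it.
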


We call $\mathfrak{C}$ the (canonical) core of $\mathfrak{A}$ over $X$ and denoted it by $\mathrm{\mathbf{Core}}_X(\mathfrak{A})$. For the special case where $X = \emptyset$, we write $\mathscr{C}$ instead of $\mathscr{C}_{\emptyset}$; and $\mathrm{\mathbf{Core}}(\mathfrak{A})$ instead of $\mathbf{Core}_{\emptyset}(\mathfrak{A})$.

\subsection{k-Homomorphisms and k-Cores}

\begin{dfn}
Let $k \in \mathbb{N}$. We write $\mathfrak{A} \rightarrow_X^n \mathfrak{B}$ and say $\mathfrak{A}$ is $k$-homomorphic to $\mathfrak{B}$ over $X$
if $(\mathfrak{C} \rightarrow_X \mathfrak{A}) \Rightarrow (\mathfrak{C} \rightarrow_X \mathfrak{B})$ for every finite structure $\mathfrak{C}$ of tree-depth at most $k$ over $X$. We write $\mathfrak{A} \rightleftarrows_X^n \mathfrak{B}$ and say $\mathfrak{A}$ and $\mathfrak{B}$ are $k$-homomorphically equivalent over $X$ if $\mathfrak{A} \rightarrow_X^k \mathfrak{B}$ and $\mathfrak{B} \rightarrow_X^k \mathfrak{A}$. As usual, we write $\mathfrak{A} \rightarrow^k \mathfrak{B}$ (resp. $\mathfrak{A} \rightleftarrows^k \mathfrak{B}$) if $\mathfrak{A} \rightarrow_{\emptyset}^k \mathfrak{B}$ (resp. $\mathfrak{A} \rightleftarrows_{\emptyset}^k \mathfrak{B}$.
\end{dfn}

For the next few definitions, let $X$ be a fixed finite set.

\begin{dfn}
For $k \in \mathbb{N}$, let $\mathscr{C}_X^k$ denote the set of finite canonical cores over $X$ with tree-depth at most $k$ over $X$. That is, $\mathscr{C}_X^k = \{\mathfrak{C} \in \mathscr{C}_X : \mathrm{td}_X(\mathfrak{C}) \leq k \}$. Members of $\mathscr{C}_X^k$ are called $k$-cores over $X$.
\end{dfn}

Now, some obvious properties:

\begin{itemize}
	
	\item $\mathscr{C}_X = \bigcup_{k \in \mathbb{N}} \mathscr{C}_X^k$.
	
	\item $\mathfrak{A} \rightarrow_X^k \mathfrak{B}$ if, and only if, $\mathfrak{C} \rightarrow_X \mathfrak{A} \Leftrightarrow \mathfrak{C} \rightarrow_X \mathfrak{B}$, for every $\mathfrak{C} \in \mathscr{C}_X^k$.
	
	\item $\mathfrak{C}_1 \rightarrow_X \mathfrak{C}_2$ if, and only if, $\mathfrak{C}_1 \rightarrow_X^k \mathfrak{C}_2$, for every $\mathfrak{C}_1, \mathfrak{C}_2 \in \mathscr{C}_X^k$; that is, $\rightarrow_X$ coincides with $\rightarrow_X^k$ on the class $\mathscr{C}_X^k$.
	
\end{itemize}

Fourth, $\rightarrow_X$ partially orders $\mathscr{C}_X^k$. This is obvious, since $\mathscr{C}_X^k$ is a subset of the homomorphism lattice $(\mathscr{C}_X,\rightarrow_X)$.

\begin{lema}
$(\mathscr{C}_X^k,\rightarrow_X)$ is an upper semilattice. That is, every two structures in $\mathscr{C}_X^k$ have a least upper bound (l.u.b.) with respect to $\rightarrow_X$. Moreover, the l.u.b. of two structures in $(\mathscr{C}_X^k,\rightarrow_X)$ coincides with their l.u.b. in the lattice $(\mathscr{C}_X,\rightarrow_X)$.
\end{lema}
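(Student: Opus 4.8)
The plan is to show that any two $k$-cores $\mathfrak{C}_1, \mathfrak{C}_2 \in \mathscr{C}_X^k$ have a least upper bound in $(\mathscr{C}_X^k, \rightarrow_X)$, and that this l.u.b. agrees with the one computed in the ambient lattice $(\mathscr{C}_X, \rightarrow_X)$. First I would recall that, since $(\mathscr{C}_X^k, \rightarrow_X)$ is a sub-poset of $(\mathscr{C}_X, \rightarrow_X)$, it suffices to exhibit \emph{a single} structure $\mathfrak{D} \in \mathscr{C}_X^k$ which is an upper bound for $\mathfrak{C}_1$ and $\mathfrak{C}_2$ with respect to $\rightarrow_X$ and which is below every common upper bound lying in $\mathscr{C}_X^k$; if in addition $\mathfrak{D}$ is below \emph{every} common upper bound in all of $\mathscr{C}_X$, then the two l.u.b.s coincide. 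The natural candidate for $\mathfrak{D}$ is $\mathrm{\mathbf{Core}}_X$ of a ``free amalgam'' or disjoint-union-over-$X$ type construction: take the structure $\mathfrak{E}$ obtained by gluing $\mathfrak{C}_1$ and $\mathfrak{C}_2$ along their common part $X$ (i.e., the pushout in $\mathbf{STRUCT}[\sigma]$ of $\mathfrak{C}_1 \leftarrow X \rightarrow \mathfrak{C}_2$, where $X$ carries only the constants/the restricted relations), and set $\mathfrak{D} = \mathrm{\mathbf{Core}}_X(\mathfrak{E})$.

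The key steps, in order, are: (1) verify that $\mathfrak{E}$ is a structure over $X$ and that both inclusions $\mathfrak{C}_i \rightarrow_X \mathfrak{E}$ are homomorphisms over $X$, so $\mathfrak{E}$ (hence $\mathfrak{D}$, using $\mathfrak{E} \rightleftarrows_X \mathfrak{D}$) is a common upper bound; (2) check the tree-depth bound $\mathrm{td}_X(\mathfrak{E}) \leq k$ — this follows because the Gaifman graph $\mathcal{G}(\mathfrak{E}) \setminus X$ is the disjoint union of $\mathcal{G}(\mathfrak{C}_1)\setminus X$ and $\mathcal{G}(\mathfrak{C}_2)\setminus X$ (no new edges are created outside $X$ since relations are only those forced by the two pieces), and tree-depth of a disjoint union is the max of the tree-depths, each $\leq k$; by the Corollary following Definition~24, $\mathrm{td}_X(\mathfrak{D}) \leq \mathrm{td}_X(\mathfrak{E}) \leq k$, so $\mathfrak{D} \in \mathscr{C}_X^k$; (3) show $\mathfrak{D}$ is least among common upper bounds: if $\mathfrak{F}$ is any structure with $\mathfrak{C}_1 \rightarrow_X \mathfrak{F}$ and $\mathfrak{C}_2 \rightarrow_X \mathfrak{F}$, then by the universal property of the pushout these two maps factor through a map $\mathfrak{E} \rightarrow_X \mathfrak{F}$, and composing with $\mathfrak{D} \rightarrow_X \mathfrak{E}$ (which exists since $\mathfrak{E} \rightleftarrows_X \mathfrak{D}$) gives $\mathfrak{D} \rightarrow_X \mathfrak{F}$; (4) conclude: the argument in step (3) places $\mathfrak{D}$ below every common upper bound in $\mathscr{C}_X$, not merely in $\mathscr{C}_X^k$, so the l.u.b. in $(\mathscr{C}_X^k, \rightarrow_X)$ exists and equals the l.u.b. in $(\mathscr{C}_X, \rightarrow_X)$ — but one must separately note that $\mathscr{C}_X$ really is a lattice, which is already asserted in the text preceding the lemma (``the homomorphism lattice $(\mathscr{C}_X, \rightarrow_X)$'').

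The main obstacle I expect is step (2): controlling the tree-depth of the amalgam. One has to be careful that the pushout over $X$ does not introduce edges in the Gaifman graph between the $\mathfrak{C}_1$-part and the $\mathfrak{C}_2$-part other than through $X$ itself — this is true because a tuple in a relation of $\mathfrak{E}$ comes from a tuple in a relation of $\mathfrak{C}_1$ or of $\mathfrak{C}_2$ (pushouts of relational structures add no relational tuples beyond the images of those in the summands), so any Gaifman edge of $\mathfrak{E}$ lies inside $\mathfrak{C}_1$ or inside $\mathfrak{C}_2$; hence $\mathcal{G}(\mathfrak{E}) \setminus X$ is genuinely the disjoint union $(\mathcal{G}(\mathfrak{C}_1)\setminus X) \sqcup (\mathcal{G}(\mathfrak{C}_2)\setminus X)$, and the standard fact $\mathrm{td}(G_1 \sqcup G_2) = \max(\mathrm{td}(G_1), \mathrm{td}(G_2))$ finishes it. A secondary subtlety is that $\mathfrak{D}$, being a \emph{retract} of $\mathfrak{E}$ rather than equal to it, requires invoking the Corollary after Definition~24 both for the tree-depth inequality and for the existence of the homomorphism $\mathfrak{D} \rightarrow_X \mathfrak{E}$; once those are in hand the rest is formal manipulation with the universal property and with $\rightleftarrows_X$.
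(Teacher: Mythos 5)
Your proof is correct and follows essentially the same route as the source the paper defers to: the paper offers no argument of its own beyond citing ([21], p.~22), and Rossman's proof there is exactly your construction — the join is the gluing (coproduct over $X$) of the two structures, whose Gaifman graph minus $X$ splits as the disjoint union of the two pieces so that $\mathrm{td}_X$ is bounded by $k$, followed by passage to the canonical core over $X$ and the universal property to see it is least among all upper bounds in $\mathscr{C}_X$, not just in $\mathscr{C}_X^k$. No gaps to report.
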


\begin{proof}
([21], p. 22)
\end{proof}

\begin{pps}
Up to $\rightleftarrows_X$, there are only finitely many finite structures over $X$ with tree-depth  $\leq n$ over $X$. Equivalently, there are only finitely many canonical cores over $X$ of tree-depth $\leq n$ over X (i.e., $\mathscr{C}_X^k$  is a finite set).
\end{pps}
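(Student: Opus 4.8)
The plan is to reduce the statement to a bound on the number of \emph{elements} of a core over $X$ of bounded tree-depth over $X$, and then to establish that bound by induction on the tree-depth $n$, with the parameter set allowed to vary from one induction step to the next.

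\textbf{Reduction.} By the Corollary above, every finite structure $\mathfrak{A}$ over $X$ is $\rightleftarrows_X$-equivalent to a unique canonical core $\mathbf{Core}_X(\mathfrak{A})$, whose tree-depth over $X$ does not exceed that of $\mathfrak{A}$. Hence the two formulations in the statement are equivalent, and it suffices to exhibit, for each $n$ and each finite $X$, a number $N(\sigma, n, |X|)$ such that $|C| \le N(\sigma, n, |X|)$ whenever $\mathfrak{C}$ is a structure over $X$ that is a core over $X$ and satisfies $\mathrm{td}_X(\mathfrak{C}) \le n$. Granting this, finiteness of $\mathscr{C}_X^n$ is immediate: since $\sigma$ is finite, there are, up to isomorphism over $X$, only finitely many structures whose universe contains $X$ and has size $\le N(\sigma,n,|X|)$, and distinct members of $\mathscr{C}_X^n$ are certainly not isomorphic over $X$.

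\textbf{Induction on $n$.} For $n = 0$ one has $\mathcal{G}(\mathfrak{C}) \backslash X = \emptyset$, so $C = X$ and $N(\sigma, 0, |X|) := |X|$ works. For $n \ge 1$ I would take a core $\mathfrak{C}$ over $X$ with $\mathrm{td}_X(\mathfrak{C}) \le n$, split $\mathcal{G}(\mathfrak{C}) \backslash X$ into its connected components $H_1, \dots, H_r$, and let $\mathfrak{C}_i$ be the substructure of $\mathfrak{C}$ induced on $V(H_i) \cup X$. The first thing to verify is that no relation tuple of $\mathfrak{C}$ can meet two distinct components outside $X$, so every relation tuple of $\mathfrak{C}$ lies inside a single $V(H_i) \cup X$; this yields $\mathfrak{C} = \bigcup_i \mathfrak{C}_i$ with the pieces meeting exactly in the substructure on $X$, and also $\mathcal{G}(\mathfrak{C}_i) \backslash X = H_i$. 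Then I would use coreness twice. First, any endomorphism of a given $\mathfrak{C}_i$ fixing $X$ extends, by the identity on $C \setminus (V(H_i) \cup X)$, to an endomorphism of $\mathfrak{C}$ fixing $X$ which is an automorphism if and only if the original map is (recall that for finite structures a bijective endomorphism is an automorphism); since $\mathfrak{C}$ is a core over $X$, this forces $\mathfrak{C}_i$ to be a core over $X$, hence a core over $X \cup \{v\}$ for every $v \in V(H_i)$. Choosing $v_i \in V(H_i)$ with $\mathrm{td}(H_i - v_i) \le n-1$, which is possible because $H_i$ is a nonempty connected graph of tree-depth $\le n$, gives $\mathrm{td}_{X \cup \{v_i\}}(\mathfrak{C}_i) = \mathrm{td}(H_i - v_i) \le n-1$, so the induction hypothesis applied to the parameter set $X \cup \{v_i\}$ of size $|X|+1$ yields $|V(H_i)| \le |\mathfrak{C}_i| \le N(\sigma, n-1, |X|+1)$. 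Second, if $\mathfrak{C}_i \cong_X \mathfrak{C}_j$ for some $i \ne j$, then rerouting the identity on $V(H_i)$ through such an isomorphism over $X$ (keeping the identity elsewhere) produces an endomorphism of $\mathfrak{C}$ fixing $X$ whose image omits the nonempty set $V(H_i)$, contradicting that $\mathfrak{C}$ is a core over $X$; hence the $\mathfrak{C}_i$ are pairwise non-isomorphic over $X$, so $r$ is at most the finite number $P$ of isomorphism-over-$X$ types of structures over $X$ of size $\le N(\sigma, n-1, |X|+1)$. Combining, $|C| = |X| + \sum_{i} |V(H_i)| \le |X| + P \cdot N(\sigma, n-1, |X|+1) =: N(\sigma, n, |X|)$, which closes the induction.

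The conceptual heart of the argument is the second use of coreness: a core over $X$ cannot carry two ``branches'' over $X$ that are isomorphic over $X$, which is exactly what bounds the \emph{number} of components; the first use propagates coreness to the branches and selects the vertex $v_i$ that strictly lowers the tree-depth, which bounds the \emph{size} of each branch. I expect the main obstacle to be the bookkeeping around the component decomposition of the Gaifman graph — that relation tuples really do split along it and that $\mathcal{G}(\mathfrak{C}_i) \backslash X = H_i$ — since this is what legitimizes both the decomposition $\mathfrak{C} = \bigcup_i \mathfrak{C}_i$ and the claim that the various extend-by-identity maps are genuine homomorphisms; by contrast, the base case and the final counting step that converts the size bound into finiteness are routine.
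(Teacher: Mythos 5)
Your proof is correct. Note that the paper offers no argument of its own for this proposition: its ``proof'' is a citation to Rossman [21, pp.\ 22--23], so what you have written is a self-contained reconstruction of essentially the standard argument from the cited source. The skeleton --- bound the size of a core over $X$ of tree-depth at most $n$ over $X$ by induction on $n$ (with the parameter set growing by one element per step), then convert the size bound into finiteness of $\mathscr{C}_X^{n}$ via the corollary on canonical cores --- is exactly the right reduction, and the two uses of coreness are the right levers. The details check out: a relation tuple of $\mathfrak{C}$ cannot meet two distinct components of $\mathcal{G}(\mathfrak{C})\setminus X$ outside $X$ (such a pair would be a Gaifman edge surviving the deletion of $X$), which is what makes both the extend-by-identity map and the reroute-through-an-isomorphism map genuine homomorphisms over $X$; in the first coreness argument, bijectivity of the extended map restricts to bijectivity of the original endomorphism of $\mathfrak{C}_i$ because the extension is the identity off $V(H_i)\cup X$, and a bijective endomorphism of a finite structure is an automorphism; in the second, the image of the rerouted map misses the nonempty set $V(H_i)$, so it cannot be an automorphism; and the choice of $v_i$ uses only the recursive definition of tree-depth of a nonempty connected graph (a definition the paper leaves implicit, deferring to [21], so it is worth stating it when you invoke it). The remaining ingredients --- that $\mathbf{Core}_X(\mathfrak{A})$ is the unique member of $\mathscr{C}_X$ homomorphically equivalent to $\mathfrak{A}$ over $X$ and does not increase tree-depth over $X$, that the bound $N(\sigma,n,|X|)$ may be taken to depend only on $|X|$ up to renaming the parameter set, and that there are finitely many isomorphism-over-$X$ types of bounded size over a finite vocabulary --- are all available in the surrounding text, so no gap remains.
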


\begin{proof}
([21], pp. 22-23).
\end{proof}{}

\begin{dfn}[$k$-Core]
For a structure $\mathfrak{A}$ and a finite set $X \subseteq A$, the $k$-core $\mathrm{\mathbf{Core}}_X^k(\mathfrak{A})$ of $\mathfrak{A}$ over $X$ is the least upper bound of $\{\mathfrak{C} \in \mathscr{C}_X^k \mid \mathfrak{C} \rightarrow_X \mathfrak{A}\}$ in the complete upper semilattice $(\mathscr{C}_X^k,\rightarrow_X)$.
\end{dfn}

\begin{lema}
$\mathfrak{A} \rightarrow_X^k \mathfrak{B}$ if, and only if, $\mathrm{\mathbf{Core}}_X^k(\mathfrak{A}) \rightarrow_X \mathfrak{B}$.
\end{lema}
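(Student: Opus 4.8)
The plan is to reduce everything to the finite upper semilattice $(\mathscr{C}_X^k,\rightarrow_X)$ of canonical $k$-cores over $X$, exploiting the fact that any finite structure of tree-depth $\le k$ over $X$ is $\rightleftarrows_X$-equivalent to its canonical core over $X$, which by the Corollary also has tree-depth $\le k$ over $X$ and hence lies in $\mathscr{C}_X^k$. Write $S := \{\mathfrak{C}\in\mathscr{C}_X^k \mid \mathfrak{C}\rightarrow_X\mathfrak{A}\}$, so that by definition $\mathrm{\mathbf{Core}}_X^k(\mathfrak{A})$ is the least upper bound of $S$ computed in $(\mathscr{C}_X^k,\rightarrow_X)$. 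The set $S$ is finite (by the Proposition, $\mathscr{C}_X^k$ is finite) and nonempty (it contains the canonical core over $X$ of the substructure of $\mathfrak{A}$ induced on $X$, whose tree-depth over $X$ is $0$). I would also record that finite joins taken in $(\mathscr{C}_X^k,\rightarrow_X)$ coincide with those taken in the full homomorphism lattice $(\mathscr{C}_X,\rightarrow_X)$: the Lemma on upper semilattices gives this for two elements, hence $\mathscr{C}_X^k$ is closed under binary joins of $\mathscr{C}_X$, and by associativity the same holds for all finite joins. In particular $\mathrm{\mathbf{Core}}_X^k(\mathfrak{A})=\bigvee_{\mathscr{C}_X}S$.

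Next I would isolate two facts about $\mathfrak{D}:=\mathrm{\mathbf{Core}}_X^k(\mathfrak{A})$. First, $\mathfrak{D}\rightarrow_X\mathfrak{A}$: by the Corollary $\mathfrak{A}\rightleftarrows_X\mathrm{\mathbf{Core}}_X(\mathfrak{A})$, so each $\mathfrak{C}\in S$ satisfies $\mathfrak{C}\rightarrow_X\mathfrak{A}\rightarrow_X\mathrm{\mathbf{Core}}_X(\mathfrak{A})$; thus $\mathrm{\mathbf{Core}}_X(\mathfrak{A})$ is an upper bound of $S$ in $(\mathscr{C}_X,\rightarrow_X)$, whence $\mathfrak{D}=\bigvee_{\mathscr{C}_X}S\rightarrow_X\mathrm{\mathbf{Core}}_X(\mathfrak{A})\rightarrow_X\mathfrak{A}$. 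Second, every $\mathfrak{C}\in\mathscr{C}_X^k$ with $\mathfrak{C}\rightarrow_X\mathfrak{A}$ satisfies $\mathfrak{C}\rightarrow_X\mathfrak{D}$, which is immediate since such a $\mathfrak{C}$ lies in $S$ and $\mathfrak{D}$ dominates every element of $S$.

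With these in hand, the two implications are short. For the forward direction, assume $\mathfrak{A}\rightarrow_X^k\mathfrak{B}$. Since $\mathfrak{D}$ has tree-depth $\le k$ over $X$ and $\mathfrak{D}\rightarrow_X\mathfrak{A}$, the very definition of $k$-homomorphism over $X$ gives $\mathfrak{D}\rightarrow_X\mathfrak{B}$, i.e. $\mathrm{\mathbf{Core}}_X^k(\mathfrak{A})\rightarrow_X\mathfrak{B}$. For the converse, assume $\mathrm{\mathbf{Core}}_X^k(\mathfrak{A})\rightarrow_X\mathfrak{B}$, and let $\mathfrak{C}$ be an arbitrary finite structure of tree-depth $\le k$ over $X$ with $\mathfrak{C}\rightarrow_X\mathfrak{A}$. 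Set $\mathfrak{C}':=\mathrm{\mathbf{Core}}_X(\mathfrak{C})$; by the Corollary $\mathfrak{C}\rightleftarrows_X\mathfrak{C}'$ and $\mathrm{td}_X(\mathfrak{C}')\le\mathrm{td}_X(\mathfrak{C})\le k$, so $\mathfrak{C}'\in\mathscr{C}_X^k$, and $\mathfrak{C}'\rightarrow_X\mathfrak{C}\rightarrow_X\mathfrak{A}$. By the second fact, $\mathfrak{C}'\rightarrow_X\mathfrak{D}\rightarrow_X\mathfrak{B}$, and composing with $\mathfrak{C}\rightarrow_X\mathfrak{C}'$ gives $\mathfrak{C}\rightarrow_X\mathfrak{B}$. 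As $\mathfrak{C}$ was arbitrary, $\mathfrak{A}\rightarrow_X^k\mathfrak{B}$.

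The step I expect to require the most care is the bookkeeping between the two semilattices: one must be sure that the least upper bound defining $\mathrm{\mathbf{Core}}_X^k(\mathfrak{A})$ inside $\mathscr{C}_X^k$ genuinely agrees with the least upper bound of the same set inside the ambient lattice $\mathscr{C}_X$, since only then can it be compared against the upper bound $\mathrm{\mathbf{Core}}_X(\mathfrak{A})$, which need not lie in $\mathscr{C}_X^k$. Everything else is a routine chain of homomorphisms over $X$, together with repeated use of the Corollary that passage to canonical cores over $X$ preserves both $\rightleftarrows_X$-equivalence and the bound $\mathrm{td}_X\le k$.
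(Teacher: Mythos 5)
Your proof is correct, and the precaution you flag is exactly the right one: since $\mathrm{\mathbf{Core}}_X(\mathfrak{A})$ need not lie in $\mathscr{C}_X^k$, one must first identify the least upper bound defining $\mathrm{\mathbf{Core}}_X^k(\mathfrak{A})$ with the least upper bound of the same finite nonempty set in $(\mathscr{C}_X,\rightarrow_X)$, which your appeal to the agreement of binary joins in the two semilattices, extended by induction to finite joins, legitimately supplies; the rest (both implications via $\mathrm{\mathbf{Core}}_X^k(\mathfrak{A})\rightarrow_X\mathfrak{A}$, the tree-depth bound for canonical cores, and composition of homomorphisms over $X$) is sound. The paper gives no argument of its own for this lemma --- it simply cites [21], p.~23 --- and your argument is essentially the standard one found there, so there is nothing substantive to add.
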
{}

\begin{proof}
([21], p. 23).
\end{proof}

\subsection{Logical Characterization of k-Homomorphism}

Recall that existential-positive formulas are built out of atomic formulas using only conjunction, disjunction and existential quantification. Primitive-positive formulas are precisely the existential-positive formulas containing no disjunctions.

\begin{lema}
$\mathfrak{A} \rightarrow^n \mathfrak{B}$ if, and only if, $\mathfrak{A} \models \theta \Rightarrow \mathfrak{B} \models \theta$, para cada sentença primitiva-positiva $\theta$ of quantifier-rank $k$.  
\end{lema}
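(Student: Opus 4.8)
The plan is to route the statement through the classical dictionary between primitive-positive sentences and finite structures, keeping careful track of the parameter: a pp-sentence expresses the existence of a homomorphism from its \emph{canonical structure}, and the \emph{quantifier rank} of the sentence can be matched to the \emph{tree-depth} of that structure. Precisely, I would first establish the following two directions (both essentially contained in [21]). (a) \emph{Structure to sentence:} for every finite $\sigma$-structure $\mathfrak{C}$ with $\mathrm{td}(\mathfrak{C}) \leq k$ there is a pp-sentence $\theta_{\mathfrak{C}}$ of quantifier rank $\leq k$ such that, for every finite $\mathfrak{A}$, $\mathfrak{A} \models \theta_{\mathfrak{C}}$ if and only if $\mathfrak{C} \rightarrow \mathfrak{A}$. (b) \emph{Sentence to structure:} for every pp-sentence $\theta$ of quantifier rank $\leq k$ there is a finite $\sigma$-structure $\mathfrak{C}_{\theta}$ with $\mathrm{td}(\mathfrak{C}_{\theta}) \leq k$ such that, for every finite $\mathfrak{A}$, $\mathfrak{A} \models \theta$ if and only if $\mathfrak{C}_{\theta} \rightarrow \mathfrak{A}$.

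For (a) I would take a rooted elimination forest $F$ of $\mathcal{G}(\mathfrak{C})$ of height $\leq k$ (which exists by definition of tree-depth), and transcribe it into a pp-sentence by recursion on $F$: a forest is the disjoint union of its trees, handled by conjunction (tree-depth of a disjoint union, and quantifier rank of a conjunction, are both the maximum over the parts); inside a tree with root $r$ one quantifies $x_r$ outermost, attaches to $x_r$ every relational fact of $\mathfrak{C}$ whose deepest participating vertex is $r$ (all entries of such a fact are pairwise comparable in $F$, hence lie on one root-to-$r$ path, so their variables are all in scope), and recurses on the subforest $T - r$, now regarded as a structure over $r$. Each recursion step deletes one level of the forest and introduces one nested existential, so $\mathrm{qr}(\theta_{\mathfrak{C}}) \leq \mathrm{height}(F) = \mathrm{td}(\mathfrak{C}) \leq k$; equivalence with the flat canonical conjunctive query of $\mathfrak{C}$ is clear because every variable remains existentially bound, every atom remains conjoined, and no atom ever escapes the scope of the quantifiers of its variables. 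For (b), let $\mathfrak{C}_{\theta}$ be the canonical structure on the set of (distinctly renamed) variables of $\theta$, with a Gaifman edge between any two variables co-occurring in an atom: $\mathfrak{A} \models \theta \Leftrightarrow \mathfrak{C}_{\theta} \rightarrow \mathfrak{A}$ is the standard conjunctive-query fact, and the bound $\mathrm{td}(\mathfrak{C}_{\theta}) \leq \mathrm{qr}(\theta)$ follows by induction on $\theta$ --- stated, so that the induction closes, for pp-formulas $\psi$ with free variables $X$ in the sharper form $\mathrm{td}_X(\mathfrak{C}_{\psi}) \leq \mathrm{qr}(\psi)$, the $\exists x$-step being exactly the observation that deleting the freshly bound variable $x$ from the Gaifman graph leaves the graph whose tree-depth is $\mathrm{td}_{X \cup \{x\}}$ of the body.

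Granting (a) and (b), the lemma drops out, with one convention to fix: I read ``quantifier-rank $k$'' as ``quantifier rank at most $k$'', which is harmless because agreement on pp-sentences of rank exactly $k$ is equivalent to agreement on pp-sentences of rank $\leq k$ --- given $\theta$ of rank $d < k$, prepending $k-d$ nested dummy existential quantifiers over fresh variables yields a pp-sentence of rank exactly $k$ logically equivalent to $\theta$ over (nonempty) finite structures. For the forward implication, assume $\mathfrak{A} \rightarrow^{k} \mathfrak{B}$ and let $\theta$ be a pp-sentence of quantifier rank $\leq k$ with $\mathfrak{A} \models \theta$; by (b), $\mathfrak{C}_{\theta} \rightarrow \mathfrak{A}$ and $\mathrm{td}(\mathfrak{C}_{\theta}) \leq k$, so the definition of $\rightarrow^{k}$ gives $\mathfrak{C}_{\theta} \rightarrow \mathfrak{B}$, i.e.\ $\mathfrak{B} \models \theta$. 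For the converse, assume $\mathfrak{A} \models \theta \Rightarrow \mathfrak{B} \models \theta$ for all such $\theta$ and let $\mathfrak{C}$ be a finite structure with $\mathrm{td}(\mathfrak{C}) \leq k$ and $\mathfrak{C} \rightarrow \mathfrak{A}$; by (a), $\mathfrak{A} \models \theta_{\mathfrak{C}}$ with $\mathrm{qr}(\theta_{\mathfrak{C}}) \leq k$, hence $\mathfrak{B} \models \theta_{\mathfrak{C}}$, i.e.\ $\mathfrak{C} \rightarrow \mathfrak{B}$; therefore $\mathfrak{A} \rightarrow^{k} \mathfrak{B}$.

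The main obstacle is (a), and within it the quantifier-rank bookkeeping rather than the semantics: the two satisfaction-versus-homomorphism equivalences are routine classical facts, but pinning the quantifier rank to the tree-depth with no off-by-one (the base cases $k = 0$ and $k = 1$ should be checked by hand to make sure the indexing is right) forces one to carry the ``over $X$'' parameter --- tree-depth over $X$ on the structure side, free variables on the formula side --- all the way through the induction, exactly the bookkeeping already set up around $\mathbf{Core}^{k}_{X}$ above. The remaining points --- disjoint-component handling, renaming bound variables apart, and the dummy-quantifier padding --- are routine.
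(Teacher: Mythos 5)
Your proposal is correct: the paper itself gives no argument for this lemma, only the citation to Rossman [21, p.~24], and your reconstruction (pp-sentences of quantifier rank $\leq k$ correspond, via canonical structures and elimination forests, to homomorphisms from structures of tree-depth $\leq k$ over the free variables) is essentially the argument given there. The quantifier-rank/tree-depth bookkeeping and the dummy-quantifier padding you flag are exactly the right points of care, so this is the same approach, just spelled out.
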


\begin{proof}
([21], p. 24).
\end{proof}

\section{The Category of Neighborhoods}

\begin{dfn}
To a vocabulary $\sigma$ we have a set $\mathcal{T}(\sigma)$ containing the closed $\sigma$-terms. $\mathcal{T}$ is given by recursion:
\begin{itemize}
    \item $\mathcal{T}(\sigma)$ contains all constant symbols;
    \item if $f_i$ is an $k$-ary function symbol of $\sigma$, and $t_1,...,t_k \in \mathcal{T}(\sigma)$, then $f(t_1,...,t_k) \in \mathcal{T}(\sigma)$.
\end{itemize}
\end{dfn}{}

\begin{rem}
The above definition of $\mathcal{T}(\sigma)$ has two subtle issues. For one we did not specify exactly what a term is. Secondly it is not clear that the above recursive definition actually defines a set. To actually justify these details requires quite a bit of set theory.
\end{rem}

\begin{dfn}
For $\sigma$ a vocabulary we define the \emph{free term $\sigma$-structure} $\widetilde{\mathcal{T}(\sigma)}$ to be the $\sigma$-structure with domain $\mathcal{T}(\sigma)$ and with interpretations as follows:
\begin{itemize}
    \item For every constant symbol $c$, we set $c^{\widetilde{\mathcal{T}(\sigma)}} = c$.
    \item For every $k$-ary function symbol $f_i$, with $a_1,...,a_n \in \mathcal{T}(\sigma)$, we set
    $$f_i^{\widetilde{\mathcal{T}(\sigma)}} = f_i(a_1,...,a_n).$$
    \item For every $k$-ary relation symbol $R_i$, we let $R_i^{\widetilde{\mathcal{T}(\sigma)}} = \emptyset$.
\end{itemize}
\end{dfn}

Furthermore $\widetilde{\mathcal{T}(\sigma)}$ has a universal property.

\begin{pps}
    For any $\sigma$-structure $\mathfrak{A}$ then there exists a unique homomorphism of $\sigma$-structures $\rho: \widetilde{\mathcal{T}(\sigma)} \rightarrow \mathfrak{A}$. 
\end{pps}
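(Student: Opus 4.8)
The plan is to construct $\rho$ by structural recursion on closed terms, then verify that the construction yields a homomorphism of $\sigma$-structures, and finally that any homomorphism $\widetilde{\mathcal{T}(\sigma)} \rightarrow \mathfrak{A}$ must coincide with it. In other words, the statement is the assertion that $\widetilde{\mathcal{T}(\sigma)}$ is the initial object of $\mathbf{STRUCT}[\sigma]$, and the proof is the familiar ``absolutely free algebra'' argument, adapted to note that the relation symbols play no role.

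For existence, define $\rho : \mathcal{T}(\sigma) \rightarrow A$ by recursion following the recursive definition of $\mathcal{T}(\sigma)$: set $\rho(c) = c^{\mathfrak{A}}$ for every constant symbol $c$, and $\rho(f_i(t_1,\ldots,t_k)) = f_i^{\mathfrak{A}}(\rho(t_1),\ldots,\rho(t_k))$ for every $k$-ary function symbol $f_i$ and all $t_1,\ldots,t_k \in \mathcal{T}(\sigma)$. That this is a legitimate definition rests on the fact that every closed term has a unique reading, either as a constant symbol or as a function symbol $f_i$ applied to a uniquely determined tuple of strictly shorter terms --- one of the set-theoretic points flagged in Remark 2, which I take as given. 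One then checks that $\rho$ is a homomorphism of $\sigma$-structures: it respects constant symbols by the first clause of its definition; it respects function symbols by the second clause together with the definition of the interpretations in $\widetilde{\mathcal{T}(\sigma)}$; and it respects relation symbols \emph{vacuously}, since $R_i^{\widetilde{\mathcal{T}(\sigma)}} = \emptyset$ for every $i$, so there is no tuple in the interpretation of $R_i$ on $\widetilde{\mathcal{T}(\sigma)}$ whose image would need to be tested.

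For uniqueness, suppose $\rho' : \widetilde{\mathcal{T}(\sigma)} \rightarrow \mathfrak{A}$ is any homomorphism. Argue by induction on the structure of a closed term $t$. If $t = c$ is a constant symbol, then $\rho'(c) = \rho'(c^{\widetilde{\mathcal{T}(\sigma)}}) = c^{\mathfrak{A}} = \rho(c)$, because homomorphisms preserve constants. If $t = f_i(t_1,\ldots,t_k)$ and the claim holds for $t_1,\ldots,t_k$, then, since $t = f_i^{\widetilde{\mathcal{T}(\sigma)}}(t_1,\ldots,t_k)$ and $\rho'$ preserves the function symbol $f_i$, we get $\rho'(t) = f_i^{\mathfrak{A}}(\rho'(t_1),\ldots,\rho'(t_k)) = f_i^{\mathfrak{A}}(\rho(t_1),\ldots,\rho(t_k)) = \rho(t)$. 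Hence $\rho' = \rho$ on all of $\mathcal{T}(\sigma)$.

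The only genuine obstacle here is foundational rather than mathematical: justifying that structural recursion and structural induction over $\mathcal{T}(\sigma)$ are valid, i.e., that $\mathcal{T}(\sigma)$ is a well-defined set carrying the appropriate recursion/induction principle --- precisely the issue raised in Remark 2. Once one fixes a standard implementation of terms (for instance as finite labelled rooted trees, or as well-founded finite sequences of symbols) so that unique readability and the recursion theorem are available, the remainder of the proof is routine bookkeeping, and the role of the relations collapses entirely because their interpretations in $\widetilde{\mathcal{T}(\sigma)}$ are empty.
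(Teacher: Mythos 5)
Your proposal is correct and follows essentially the same route as the paper: define $\rho$ by recursion on closed terms using unique readability, observe that the homomorphism condition for relation symbols is vacuous because $R_i^{\widetilde{\mathcal{T}(\sigma)}} = \emptyset$, and establish uniqueness by induction on term complexity. The only difference is expository --- your explicit framing of $\widetilde{\mathcal{T}(\sigma)}$ as an initial object and your fuller discussion of the foundational point in Remark 2 --- not a difference in the argument itself.
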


\begin{proof}
The map $\rho: \widetilde{\mathcal{T}(\sigma)} \rightarrow \mathfrak{A}$  is defined as follows:
\begin{itemize}
    \item for a constant symbol $c$, $\rho(c^{\widetilde{\mathcal{T}(\sigma)}}) = c^{\mathfrak{A}}$; and 
    \item if $t \in \mathcal{T}(\sigma)$ has the form $f(t_1,...,t_n)$ then $\rho(t) = f^{\mathfrak{A}}(\rho(t_1),...,\rho(t_n))$.
\end{itemize}
This is well-defined since we have a unique parsing lemma for terms. Furthermore $\rho$ is clearly a homomorphism. On constant
and function symbols it is defined as is should be and for relation symbols the
claim is vacuous since $R^{\widetilde{\mathcal{T}(\sigma)}} = \emptyset$.
For the uniqueness we use induction on the complexity of terms. Suppose $\rho,\xi: \widetilde{\mathcal{T}(\sigma)} \rightarrow \mathfrak{A}$ are homomorphisms. Then
\begin{itemize}
   
       \item for every constant symbol $c$, $\rho(c^{\widetilde{\mathcal{T}(\sigma)}}) = c^{\mathfrak{A}} = \xi(c)$;
       
       \item if $t \in \mathcal{T}(\sigma)$ has the form $f(t_1,...,t_n)$, then $\rho(t) = f^{\mathfrak{A}}(\rho(t_1),...,\rho(t_n)) = f^{\mathfrak{A}}(\xi(t_1),...,\xi(t_n)) = \xi(t)$, since the $t_i$'s have lower complexity than $t$.
       
       \noindent Thus $\rho = \xi$.
   \end{itemize}
\end{proof}

\begin{dfn}
Let $T=\{1\}$ and let $\top$ be the $\sigma$-structure such that:
\begin{itemize}
    \item the domain of $\top$ is $T$;
    \item for every $k$-ary relation symbol $R_i$, $R_i^{\top} = T^k$;
    \item for every constant symbol $c$, $c^{\top} = 1$.
\end{itemize}{}
\end{dfn}{}

There exists exactly one homomorphism from any $\sigma$-structure to $\top$, namely the constant mapping to 1.

\begin{dfn}
Let $N_{d}^{\mathfrak{A}}(\vec{a})$ and $N_{d}^{\mathfrak{B}}(\vec{b})$ be $\sigma_n$-structures. A homomorphism $h_{d}^{\sigma_n}: N_{d}^{\mathfrak{A}}(\vec{a}) \rightarrow N_{d}^{\mathfrak{B}}(\vec{b})$ is defined as the
homomorphism $h: \mathfrak{A} \rightarrow \mathfrak{B}$ such that the function $h: A \rightarrow B$, between the universes $A$ and $B$ of $\mathfrak{A}$ and $\mathfrak{B}$, respectively, is restricted to balls $B_{d}^{\mathfrak{A}}(a)$ and $B_{d}^{\mathfrak{B}}(b)$, that is, it is
a function $h_{d}^{\sigma_n}: B_{d}^{\mathfrak{A}}(\vec{a}) \rightarrow B_{d}^{\mathfrak{B}}(\vec{b})$ such that:
\begin{enumerate}
    \item For each $k$-ary relation symbol $R_i$, interpreted as $R_i^{\mathfrak{A}}$ restricted to $B_{d}^{\mathfrak{A}}(\vec{a})$, that is, $R_i^{\mathfrak{A}} \cap (B_{d}^{\mathfrak{A}}(\vec{a}))^k$, and a tuple $(x_1^d,...,x_k^d) \in R_i^{\mathfrak{A}} \cap (B_{d}^{\mathfrak{A}}(\vec{a}))^k$, the tuple $(h^{\sigma_n}(x_1^d),...,h^{\sigma_n}(x_k^d))$ is in $R_i^{\mathfrak{B}} \cap (B_{d}^{\mathfrak{B}}(\vec{b}))^k$; and
    \item The constant symbols in $\sigma_n$ that are interpreted as $(a_1,...,a_n) = \vec{a}$ in $\mathfrak{A}$ are interpreted as $(h(a_1),...,h(a_n)) = \vec{b}$ in $\mathfrak{B}$.
\end{enumerate}
\end{dfn}

\begin{dfn}
The Category $\mathbf{STRUCT}[\sigma_n]_{(d,0)}^{\widetilde{\mathcal{T}(\sigma_n)}}$ is defined as follow:
\begin{itemize}
    \item objects: $d$-neighborhoods $N_d^{\mathfrak{A}}(\vec{a})$, $0$-neighborhood $N_0^{\mathfrak{A}}(a)$ and $\widetilde{\mathcal{T}(\sigma_n)}$;
    \item morphisms: 
    \begin{itemize}
    \item homomorphisms $h_{d}^{\sigma_n}: N_{d}^{\mathfrak{A}}(\vec{a}) \rightarrow N_{d}^{\mathfrak{B}}(\vec{b})$;
        \item the only homomorphism $\rho_{\sigma_n}: \widetilde{\mathcal{T}(\sigma_n)} \rightarrow N_{d}^{\mathfrak{A}}(\vec{a})$, for every $\sigma_n$-structure $N_{d}^{\mathfrak{A}}(\vec{a})$ of $\mathbf{STRUCT}[\sigma_n]_{d}^{\widetilde{\mathcal{T}(\sigma_n)}}$; and
    \item the only homomorphism $T: N_{d}^{\mathfrak{A}}(\vec{a}) \rightarrow N_0^{\mathfrak{A}}(a)$, for every $\sigma_n$-structure $N_{d}^{\mathfrak{A}}(\vec{a})$ of $\mathbf{STRUCT}[\sigma_n]_{d}^{\widetilde{\mathcal{T}(\sigma_n)}}$.
    \end{itemize}{}
\end{itemize}{}
\end{dfn}{}

Remember that a category $\mathcal{C}$ is finitely complete if it has a terminal object and admits all binary products and equalizers; dually, $\mathcal{C}$ is finitely cocomplete if it has a initial object and admits all binary coproducts and coequalizers.

\begin{pps}
    The category $\mathbf{STRUCT}[\sigma_n]_{(d,0)}^{\widetilde{\mathcal{T}(\sigma_n)}}$ is finitely complete and finitely cocomplete.
\end{pps}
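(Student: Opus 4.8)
The plan is to check the very criterion recalled just above the statement: a category is finitely complete once it has a terminal object together with all binary products and equalizers, and finitely cocomplete once it has an initial object together with all binary coproducts and coequalizers. I would first pin down the two distinguished objects. By the universal property of $\widetilde{\mathcal{T}(\sigma_n)}$ established above, it admits exactly one homomorphism into every $\sigma_n$-structure, so it is the initial object of the category, and moreover the only morphism with codomain $\widetilde{\mathcal{T}(\sigma_n)}$ is its identity. Dually, by construction the category supplies every $d$-neighborhood with a unique morphism $T$ to $N_0$ (the $0$-neighborhood object), and the composite $T\circ\rho_{\sigma_n}$ is, by initiality, the unique morphism $\widetilde{\mathcal{T}(\sigma_n)}\to N_0$; hence $N_0$ is terminal, and the only morphism with domain $N_0$ is its identity. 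These two remarks settle, at no cost, every (co)limit one of whose inputs is $\widetilde{\mathcal{T}(\sigma_n)}$ or $N_0$ --- e.g.\ $\widetilde{\mathcal{T}(\sigma_n)}\times\mathfrak{X}=\widetilde{\mathcal{T}(\sigma_n)}$, $N_0\times\mathfrak{X}=\mathfrak{X}$, dually for coproducts, and equalizers/coequalizers with such a leg are trivial --- so it remains only to treat inputs that are genuine $d$-neighborhoods.

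For the binary product of $X=N_d^{\mathfrak{A}}(\vec{a})$ and $Y=N_d^{\mathfrak{B}}(\vec{b})$ I would take, \emph{not} the full direct product $\sigma_n$-structure (see the obstacle below), but its $d$-neighborhood $N_d^{X\times Y}(\vec{c})$ at the constant tuple $\vec{c}=((a_1,b_1),\dots,(a_n,b_n))$, the two coordinate projections restricted to this ball serving as the projection maps. For the equalizer of a parallel pair $f,g:N_d^{\mathfrak{C}}(\vec{c})\to N_d^{\mathfrak{D}}(\vec{e})$ I would take $N_d^{\mathfrak{E}}(\vec{c})$, where $\mathfrak{E}$ is the substructure induced on the agreement set $\{\,z:f(z)=g(z)\,\}$ (which contains $\vec{c}$, since homomorphisms fix the constants), together with the inclusion. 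In both cases the verification of the universal property hinges on a single elementary fact: a homomorphism of $\sigma_n$-structures never increases Gaifman distance, so it carries a path of length $\le d$ witnessing membership in a ball to a walk of length $\le d$ in the Gaifman graph of the target. Applied to an arbitrary competing cone $z_1:Z\to X$, $z_2:Z\to Y$ with $Z$ itself a $d$-neighborhood (so each element of $Z$ lies within distance $d$ of $Z$'s constants), this shows that the pairing $w\mapsto(z_1(w),z_2(w))$ already lands inside $B_d^{X\times Y}(\vec{c})$; likewise, any map equalizing $f$ and $g$ already lands inside $B_d^{\mathfrak{E}}(\vec{c})$. Uniqueness is then forced by the projections, respectively by the inclusion. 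Together with the terminal object, this supplies all binary products and equalizers, hence finite completeness.

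The dual constructions deliver binary coproducts and coequalizers, and here no truncation is needed. The coproduct of $X=N_d^{\mathfrak{A}}(\vec{a})$ and $Y=N_d^{\mathfrak{B}}(\vec{b})$ in $\mathbf{STRUCT}[\sigma_n]$ is the disjoint union of the two structures with their constant tuples identified to a single tuple $\vec{c}$; since every element of each summand already lies within distance $d$ of its own constants, after identification it lies within distance $d$ of $\vec{c}$, so the coproduct is $N_d^{\mathfrak{K}}(\vec{c})$ for the glued structure $\mathfrak{K}$, again an object of the category, with the evident injections and copairing maps. The coequalizer of $f,g:X\to Y$ is the quotient of $Y$ by the congruence generated by $\{\,(f(x),g(x)):x\in X\,\}$; a quotient map never increases Gaifman distance, so the quotient is once more a $d$-neighborhood (of the image of the constant tuple of $Y$), and the canonical map factors through it uniquely. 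With the initial object, this yields finite cocompleteness.

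The main obstacle is exactly the point flagged in the second step: the naive candidate for the product, the full direct product $\sigma_n$-structure $X\times Y$, is in general \emph{not} a $d$-neighborhood, because adjacency in the Gaifman graph of a product requires one relational tuple synchronizing both coordinates, so a pair $(x,y)$ with $x$ close to $\vec{a}$ in $X$ and $y$ close to $\vec{b}$ in $Y$ need not be close to $\vec{c}$ in $X\times Y$, and $B_d^{X\times Y}(\vec{c})$ can be a proper substructure of $X\times Y$ (a two-vertex path times a three-vertex path already exhibits this for $d=2$). Passing to $N_d^{X\times Y}(\vec{c})$ restores membership in the category, but then one has to argue that the truncation costs nothing for the universal property --- i.e.\ that no competing cone can escape the ball $B_d^{X\times Y}(\vec{c})$ --- which is precisely the content of the distance-non-increasing property. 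The same tension, and the same resolution, recur for equalizers, since the agreement substructure need not be a $d$-neighborhood either; coproducts and coequalizers, by contrast, are unproblematic, because identifying or quotienting can only move elements closer to the constants.
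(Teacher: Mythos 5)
Your proposal is correct and, for the key constructions, takes a genuinely different (and more careful) route than the paper. The paper's own proof declares $N_0$ terminal and $\widetilde{\mathcal{T}(\sigma_n)}$ initial (as you do), and then simply takes the product to be the full product structure on $B_d^{\mathfrak{A}}(\vec{a})\times B_d^{\mathfrak{B}}(\vec{b})$, the coproduct to be the disjoint union, the equalizer to be the full agreement substructure, and the coequalizer to be the quotient, with no further verification. You instead truncate the product and the equalizer to the $d$-ball around the constant tuple, form the coproduct by amalgamating the two constant tuples, and justify that nothing is lost via the fact that homomorphisms of $\sigma_n$-structures fix the constants and never increase Gaifman distance. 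That extra step is exactly what is needed for the constructions to stay inside the category as actually defined: as your two-path example shows, the full product (and likewise the agreement substructure) need not be a $d$-neighborhood of its constants, and the plain disjoint union does not even carry a single well-defined interpretation of the $n$ constant symbols; so where the paper's proof gains brevity by invoking the standard $\mathbf{STRUCT}[\sigma_n]$ (co)limit constructions, your version supplies the closure and universal-property checks that make the statement go through, and the two answers coincide precisely when the truncation is vacuous. The one point worth spelling out slightly more is the intrinsic radius bound you use for the apex $Z$ (that every element of a $d$-neighborhood lies within distance $d$ of the constants already in the neighborhood's own Gaifman graph, not just in the ambient structure): it holds because a witnessing path of length at most $d$ stays, together with the relational tuples witnessing its edges, inside the ball; with that remark, and with the observation that the equalizing map corestricts to a homomorphism into the agreement substructure before one applies the distance argument, your cone and equalizer verifications are complete.
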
{}

\begin{proof}
The terminal object is $N_0^{\mathfrak{A}}(a)$, the initial object is $\widetilde{\mathcal{T}(\sigma_n)}$.

For two $\sigma_n$-finite relational structures $N_{d}^{\mathfrak{A}}(\vec{a})$ and $N_{d}^{\mathfrak{B}}(\vec{b})$, $N_{d}^{\mathfrak{A}}(\vec{a}) \times N_{d}^{\mathfrak{B}}(\vec{b})$ is the $\sigma_n$-structure defined on the Cartesian product $B_{d}^{\mathfrak{A}}(a) \times B_{d}^{\mathfrak{B}}(b)$.

For two $\sigma_n$-finite relational structures $N_{d}^{\mathfrak{A}}(\vec{a})$ and $N_{d}^{\mathfrak{B}}(\vec{b})$, $N_{d}^{\mathfrak{A}}(\vec{a}) \coprod N_{d}^{\mathfrak{B}}(\vec{b})$ is the $\sigma_n$-structure defined on the disjoint union $B_{d}^{\mathfrak{A}}(a) \coprod B_{d}^{\mathfrak{B}}(b)$.

For two morphisms $f,g: N_{d}^{\mathfrak{A}}(\vec{a}) \rightarrow N_{d}^{\mathfrak{B}}(\vec{b})$, the equalizer of $f$ and $g$ is the substructure induced by the ''vertex'' sent by $f$ and $g$ for the same ''vertex''; that is, it is the substructure induced by the set $\{x \in B_{d}^{\mathfrak{A}}(a) \mid f(x) = g(x) \} \subseteq B_{d}^{\mathfrak{A}}(a)$.

For two morphisms $f,g: N_{d}^{\mathfrak{A}}(\vec{a}) \rightarrow N_{d}^{\mathfrak{B}}(\vec{b})$, the coequalizer of $f$ and $g$ is the substructure induced by the quotient of $N_{d}^{\mathfrak{B}}(\vec{b})$ by the equivalence relation generated by the set of ''vertex'' pairs $\{\langle f(x),g(x) \rangle \mid x \in B_{d}^{\mathfrak{A}}(a)\}$.

\end{proof}{}

\subsection{Homomorphism as (Partial) (Quasi-) Order}

There is a quasi-order in any category $\mathcal{C}$ induced by its arrows. In addition, we can define an associated equivalence relation on $\mathcal{C}$ as follows:

$$A \equiv_{\rightarrow} B \Leftrightarrow A \rightarrow B \wedge B \rightarrow A.$$

Thus, $\rightarrow$ induces a partial order on $\mathcal{C} / \equiv_{\rightarrow}$. For the case of $\mathcal{C} = \mathbf{STRUCT}[\sigma]$, $\mathcal{C} / \equiv_{\rightarrow}$ is a set, and we have the poset

$$\mathscr{C}(\mathbf{STRUCT}[\sigma]) = \langle \mathbf{STRUCT}[\sigma] / \equiv_{\rightarrow}, \rightarrow \rangle.$$ 

Now, notice that there is a canonical functor

$$\mathscr{F}_{\sigma}:\mathbf{STRUCT}[\sigma] \rightarrow \mathbf{STRUCT}[\sigma] / \equiv_{\rightarrow}$$

\noindent sending every $\sigma$-structure $\mathfrak{A}$ to its class of $\equiv_{\rightarrow}$-equivalence $[\mathfrak{A}]$, and every homomorphism of $\sigma$-structures $f: \mathfrak{A} \rightarrow \mathfrak {B}$ for the morphism $[f]: [\mathfrak{A}] \rightarrow [\mathfrak{B}]$.

Note also that the $\equiv_{\rightarrow}$-equivalence classes formed by $\sigma$-cores are, by Lemma 24, $\cong$-equivalence classes. Thus, the set $\mathscr{C}$ of finite $\sigma$-cores is contained in $\mathbf{STRUCT}[\sigma] / \equiv_{\rightarrow}$. In addition, under Corollary 4, every $\equiv_{\rightarrow}$-equivalence class in $\mathbf{STRUCT}[\sigma] / \equiv_{\rightarrow}$ has a single representative, up to isomorphism, in $\mathscr{C}$. Thus, if $f: \mathfrak{A} \rightarrow \mathfrak{B}$ is a homomorphism of $\sigma$-structures, where $\mathfrak{A}$ is $\equiv_{\rightarrow}$-equivalent to $\mathfrak{B}$, then $[f]: [\mathfrak{A}] \rightarrow [\mathfrak{B}]$ is an isomorphism in $\mathbf{STRUCT}[\sigma] / \equiv_{\rightarrow}$.

The same construction works for the category $\mathbf{STRUCT}[\sigma_n]_{(d,0)}^{\widetilde{\mathcal{T}(\sigma_n)}}$. That is, we also have the poset

$$\mathscr{C}(\mathbf{STRUCT}[\sigma_n]_{(d,0)}^{\widetilde{\mathcal{T}(\sigma_n)}}) = \langle \mathbf{STRUCT}[\sigma_n]_{(d,0)}^{\widetilde{\mathcal{T}(\sigma_n)}} / \equiv_{\rightarrow}, \rightarrow \rangle,$$ 

\noindent with a canonical functor

$$\mathscr{F}_{[\sigma_n]_{(d,0)}}:\mathbf{STRUCT}[\sigma_n]_{(d,0)}^{\widetilde{\mathcal{T}(\sigma_n)}} \rightarrow \mathbf{STRUCT}[\sigma_n]_{(d,0)}^{\widetilde{\mathcal{T}(\sigma_n)}} / \equiv_{\rightarrow}$$ 

\noindent sending every $d$-neighborhood $N_{d}^{\mathfrak{A}}(\vec{a})$ of a given $n$-tuple of points $\vec{a}$ in a given $\sigma$-structure $\mathfrak{A}$ for its $\equiv_{\rightarrow}$-equivalence class $[N_{d}^{\mathfrak{A}}(\vec{a})]$, and every homomorphism of $d$-neighborhoods $f_{d}^{\sigma_n}: N_{d}^{\mathfrak{A}}(\vec{a}) \rightarrow N_{d}^{\mathfrak{B}}(\vec{b})$ for the morphism $[f_{d}^{\sigma_n}]: [N_{d}^{\mathfrak{A}}(\vec{a})] \rightarrow [N_{d}^{\mathfrak{B}}(\vec{b})]$.

Similarly, the $\equiv_{\rightarrow}$-equivalence classes formed by $[\sigma_n]_{(d,0)}$-cores are, by Lemma 24, $\cong$-equivalence classes. Thus, we have the set $\mathscr{C}_{[\sigma_n]_{(d,0)}}$ of finite $[\sigma_n]_{(d,0)}$-cores (that is, the cores of $d$-neighborhoods of $n$-tuples of points $\vec{a}$ of $\sigma$-structures) contained in $\mathbf{STRUCT}[\sigma_n]_{(d,0)}^{\widetilde{\mathcal{T}(\sigma_n)}} / \equiv_{\rightarrow}$. In addition, also by Corollary 4, every $\equiv_{\rightarrow}$-equivalence class in $\mathbf{STRUCT}[\sigma_n]_{(d,0)}^{\widetilde{\mathcal{T}(\sigma_n)}} / \equiv_{\rightarrow}$ has a single representative, up to isomorphism, in $\mathscr{C}_{[\ sigma_n]_{(d,0)}}$. Thus, if $f_{d}^{\sigma_n}: N_{d}^{\mathfrak{A}}(\vec{a}) \rightarrow N_{d}^{\mathfrak{B}}(\vec{b})$ is a homomorphism of $d$-neighborhoods, where $N_{d}^{\mathfrak{A}}(\vec{a})$ is $\equiv_{\rightarrow}$-equivalent to $N_{d}^{\mathfrak{B}}(\vec{b})$, then $[f_{d}^{\sigma_n}]: [N_{d}^{\mathfrak{A}}(\vec{a})] \rightarrow [N_{d}^{\mathfrak{B}}(\vec{b})]$ is an isomorphism in $\mathbf{STRUCT}[\sigma_n]_{(d,0)}^{\widetilde{\mathcal{T}(\sigma_n)}} / \equiv_{\rightarrow}$.

\subsection{k-Homomorphism as (Partial) (Quasi-) Order}

Note that, as with $\rightarrow_X$, the relation $\rightarrow_X^k$ is a quasi-order on structures. It is evident that $\mathfrak{A} \rightarrow_X \mathfrak{B}$ implies $\mathfrak{A} \rightarrow_X^k \mathfrak{B}$, for every $k$. Also, note that $\mathfrak{A} \rightarrow_X^k \mathfrak{B}$, for every $k$ implies $\mathfrak{A} \rightarrow_{X'}^{k'} \mathfrak{B}$, for all $k' \leq k$ and $X' \subseteq X$.

The same process performed in \S 4.1 can be performed here. To do this, let $\mathbf{STRUCT}[\sigma_n]_{(d,0)}^{(\widetilde{\mathcal{T}(\sigma_n)})_k}$ be a subcategory $\mathbf{STRUCT}[\sigma_n]_{(d,0)}^{\widetilde{\mathcal{T}(\sigma_n)}}$ whose objects are the $\sigma_n$-structures $N_{d}^{\mathfrak{A}}(\vec{a})$ such that $N_{d}^{\mathfrak{A}}(\vec{a}) \rightarrow N_{d}^{\mathfrak{D}}(\vec{d})$, where $N_{d}^{\mathfrak{D}}(\vec{d})$ is a $\sigma_n$-structure with tree-depth at most $k$. Thus, we can define on $\mathbf{STRUCT}[\sigma_n]_{(d,0)}^{(\widetilde{\mathcal{T}(\sigma_n)})_k}$ the following equivalence relation:

$$N_{d}^{\mathfrak{A}}(\vec{a}) \equiv_{\rightarrow^k} N_{d}^{\mathfrak{B}}(\vec{b}) \Leftrightarrow N_{d}^{\mathfrak{A}}(\vec{a}) \rightarrow^k N_{d}^{\mathfrak{B}}(\vec{b}) \wedge N_{d}^{\mathfrak{B}}(\vec{b}) \rightarrow^k N_{d}^{\mathfrak{A}}(\vec{a}).$$

Therefore, $\rightarrow^k$ induces a partial order on $\mathbf{STRUCT}[\sigma_n]_{(d,0)}^{(\widetilde{\mathcal{T}(\sigma_n)})_k}$, and $\mathbf{STRUCT}[\sigma_n]_{(d,0)}^{(\widetilde{\mathcal{T}(\sigma_n)})_k} / \equiv_{\rightarrow^k}$ is the poset

$$\mathscr{C}(\mathbf{STRUCT}[\sigma_n]_{(d,0)}^{(\widetilde{\mathcal{T}(\sigma_n)})})^k = \langle \mathbf{STRUCT}[\sigma_n]_{(d,0)}^{(\widetilde{\mathcal{T}(\sigma_n)})_k} / \equiv_{\rightarrow^k}, \rightarrow^k \rangle.$$

As in \S4.1, there is a canonical functor

$$\mathscr{F}^k_{[\sigma_n]_{(d,0)}}:\mathbf{STRUCT}[\sigma_n]_{(d,0)}^{(\widetilde{\mathcal{T}(\sigma_n)})_k} \rightarrow \mathbf{STRUCT}[\sigma_n]_{(d,0)}^{(\widetilde{\mathcal{T}(\sigma_n)})_k} / \equiv_{\rightarrow^k}$$

\noindent sending every $\sigma_n$-structure $N_{d}^{\mathfrak{A}}(\vec{a})$ in $\mathbf{STRUCT}[\sigma_n]_{d}^{(\widetilde{\mathcal{T}(\sigma_n)})_k}$ for its $\equiv_{\rightarrow^k}$-equivalence class $[N_{d}^{\mathfrak{A}}(\vec{a})]^k$, and every homomorphism of $\sigma_n$-structures $f: N_{d}^{\mathfrak{A}}(\vec{a}) \rightarrow N_{d}^{\mathfrak{B}}(\vec{b})$ in $\mathbf{STRUCT}[\sigma_n]_{d}^{(\widetilde{\mathcal{T}(\sigma_n)})_k}$ for the morphism $[f]^k: [N_{d}^{\mathfrak{A}}(\vec{a})]^k \rightarrow [N_{d}^{\mathfrak{B}}(\vec{b})]^k$. 

Similarly, the $\equiv_{\rightarrow^k}$-equivalence classes formed by $\sigma_n$-$k$-cores are, by Lemma 24, $\cong$-equivalence classes. Thus, the set $\mathscr{C}^k$ of finite $\sigma_n$-$k$-cores is contained in $\mathbf{STRUCT}[\sigma_n]_{(d,0)}^{(\widetilde{\mathcal{T}(\sigma_n)})_k}/\equiv_{\rightarrow^k}$. And, again, by Corollary 4, every $\equiv_{\rightarrow^k}$-equivalence class in $\mathbf{STRUCT}[\sigma_n]_{(d,0)}^{(\widetilde{\mathcal{T}(\sigma_n)})_k}/\equiv_{\rightarrow^k}$ has a single representative, up to isomorphism, in $\mathscr{C}^k$. Thus, if $f: N_{d}^{\mathfrak{A}}(\vec{a}) \rightarrow N_{d}^{\mathfrak{B}}(\vec{b})$ is a homomorphism of $\sigma_n$-structures in $\mathbf{STRUCT}[\sigma_n]_{d}^{(\widetilde{\mathcal{T}(\sigma_n)})_k}$, where $N_{d}^{\mathfrak{A}}(\vec{a})$ is $\equiv_{\rightarrow^k}$-equivalent to $N_{d}^{\mathfrak{B}}(\vec{b})$, then $[f]^k: [N_{d}^{\mathfrak{A}}(\vec{a})]^k \rightarrow [N_{d}^{\mathfrak{B}}(\vec{b})]^k$ is an isomorphism in $\mathbf{STRUCT}[\sigma_n]_{(d,0)}^{(\widetilde{\mathcal{T}(\sigma_n)})_k}/\equiv_{\rightarrow^k}$. 

\section{A Homotopic Variation}
\label{sec:others}

\subsection{Generalized Core Model Structure} 

In [4], Droz defines a core-based model structure (which is called a core model structure) on a particular category of graphs, and generalizes it in [5] for any finitely complete and finitely cocomplete category.

\begin{dfn}[DROZ \& ZAKHAREVICH]
	Let $\mathcal{C}$ be a category. We define the preorder $P(\mathcal{C})$ with $\mathrm{ob}P(\mathcal{C}) = \mathrm{ob}\mathcal{C}$, and $\mathrm{Hom}_{P(\mathcal{C})}(X,Y)$ equaling the one-point set if there exists a morphism $X \rightarrow Y \in \mathcal{C}$, and the empty set otherwise. We will write $X \sim_{P(\mathcal{C})} Y$ if $X$ is isomorphic to $Y$ in $P(\mathcal{C})$. 
\end{dfn}

There is a canonical functor $R_{\mathcal{C}}: \mathcal{C} \rightarrow P(\mathcal{C})$, such that any functor $F:\mathcal{C} \rightarrow \mathcal{D}$, where $\mathcal{D}$ is a preorder, factors through $R_{\mathcal{C}}$. Droz and Zakharevich then define a model structure on $\mathcal{C}$ such
that the weak equivalences are $R_{\mathcal{C}}^{-1}(\mathrm{iso}P(\mathcal{C}))$.

Droz and Zakharevich then prove the following result:

\begin{thm}[Generalized core model structure on $\mathcal{C}$]
    There is a model structure $\mathbb{C}^{\mathrm{core}}$ with homotopy category $P(\mathcal{C})$ on any bicomplete category $\mathcal{C}$. A morphism $f: A \rightarrow B$ is a weak equivalence iff $A \sim_{P(\mathcal{C})} B$. The acyclic fibrations are exactly the retractions in $\mathcal{C}$.
\end{thm}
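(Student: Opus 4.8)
The plan is to build the three classes of maps by hand, using the weak factorization system (WFS) machinery developed above, and then read off the homotopy category. Set $\mathcal{W}:=R_{\mathcal{C}}^{-1}(\mathrm{iso}\,P(\mathcal{C}))$, the class of $f\colon A\to B$ with $A\rightleftarrows B$; let $\mathcal{R}$ be the class of retractions (split epimorphisms); put $\mathrm{Cof}:={}^{\boxslash}\mathcal{R}$ for the cofibrations and $\mathrm{Fib}:=(\mathrm{Cof}\cap\mathcal{W})^{\boxslash}$ for the fibrations. Two–out–of–three for $\mathcal{W}$ (\textbf{MC2}) is immediate, since $\mathcal{W}$ is the preimage under the functor $R_{\mathcal{C}}$ of the isomorphisms of $P(\mathcal{C})$, which satisfy two–out–of–three; closure of $\mathcal{W},\mathrm{Cof},\mathrm{Fib}$ under retracts (part of \textbf{MC3}) is automatic, using the remark that classes of the form ${}^{\boxslash}S$ and $S^{\boxslash}$ are retract-closed for $\mathrm{Cof}$ and $\mathrm{Fib}$, and ``a retract of an isomorphism is an isomorphism'' for $\mathcal{W}$. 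Note also $\mathcal{R}\subseteq\mathcal{W}$, since a split epi with a section gives $A\rightleftarrows B$.

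Next I would record two structural facts that collapse this structure conveniently. First, every object is cofibrant: for a lifting square of $\emptyset\to A$ against a split epi $p$ with section $s$, the composite of $s$ with the bottom arrow is a lift, so $\emptyset\to A\in{}^{\boxslash}\mathcal{R}$. Second, every map of $\mathrm{Cof}\cap\mathcal{W}$ is a split monomorphism: if $i\colon A\to C$ lies there, choose $g\colon C\to A$ (available since $i\in\mathcal{W}$), lift $i$ against the split epi $[i,\mathrm{id}_C]\colon A\amalg C\to C$ (section $\mathrm{in}_C$) to obtain $h\colon C\to A\amalg C$ with $h\,i=\mathrm{in}_A$, and check that $[\mathrm{id}_A,g]\circ h$ is a retraction of $i$. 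Consequently every object is fibrant as well, since any map out of $A$ extends along a split mono $i$ by precomposing with a retraction of $i$, so $X\to *$ lies in $(\mathrm{Cof}\cap\mathcal{W})^{\boxslash}$. Thus in this model structure all objects of $\mathcal{C}$ are fibrant and cofibrant; I will use this for the homotopy category.

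Now the two WFS, recognised via the folklore recognition lemma for WFSs (in both cases the lifting condition and retract-closure are free, so only the factorization must be produced). For $(\mathrm{Cof},\mathcal{R})$ there is an explicit factorization
\[ A \xrightarrow{\ \mathrm{in}_A\ } A\amalg B \xrightarrow{\ [f,\mathrm{id}_B]\ } B, \]
where $[f,\mathrm{id}_B]$ is split by $\mathrm{in}_B$, hence lies in $\mathcal{R}$, and $\mathrm{in}_A\in\mathrm{Cof}$ by a direct check (against a split epi $p$ with section $s$, take the lift to be the top map on the $A$-summand and $s$ followed by the bottom map on the $B$-summand); combined with the lemma characterising retractions as $\{\emptyset\to A\}^{\boxslash}$ and the inclusion $\{\emptyset\to A\}\subseteq\mathrm{Cof}$, this forces $\mathrm{Cof}^{\boxslash}=\mathcal{R}$. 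For $(\mathrm{Cof}\cap\mathcal{W},\mathrm{Fib})$ the remaining task — factoring an arbitrary $f\colon A\to B$ as an acyclic cofibration followed by a map in $(\mathrm{Cof}\cap\mathcal{W})^{\boxslash}$ — is the main obstacle: the naive candidates fail, since $\mathrm{in}_A\colon A\to A\amalg B$ need not be a weak equivalence (there may be no morphism $B\to A$) and $(\mathrm{id}_A,f)\colon A\to A\times B$ need not lie in ${}^{\boxslash}\mathcal{R}$. I would produce the factorization by the small object argument, legitimate because $\mathcal{C}$ is bicomplete, applied to a set $J$ of acyclic cofibrations generating $\mathrm{Cof}\cap\mathcal{W}$ (as $J={}^{\boxslash}(J^{\boxslash})$) — for instance the acyclic cofibrations between objects of bounded presentation size; isolating such a generating $J$ is the one genuinely delicate point, and is precisely what the reference [5] carries out.

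Granting both WFS, \textbf{MC1} is the bicompleteness hypothesis and \textbf{MC5} is immediate; \textbf{MC4(ii)} is the definition of $\mathrm{Fib}$. For \textbf{MC4(i)} and for the theorem's second assertion, I identify the acyclic fibrations with $\mathcal{R}$ by the retract argument: a fibration lying in $\mathcal{W}$, factored through $(\mathrm{Cof},\mathcal{R})$ as $p'\circ j$, has $p'\in\mathcal{R}\subseteq\mathcal{W}$, hence $j\in\mathrm{Cof}\cap\mathcal{W}$ by \textbf{MC2}, hence the fibration has the RLP against $j$ and is thereby a retract of $p'\in\mathcal{R}$, hence lies in $\mathcal{R}$; conversely $\mathcal{R}=\mathrm{Cof}^{\boxslash}\subseteq(\mathrm{Cof}\cap\mathcal{W})^{\boxslash}=\mathrm{Fib}$ and $\mathcal{R}\subseteq\mathcal{W}$. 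Finally, for the homotopy category: since every object is fibrant and cofibrant, $\mathrm{Hom}_{\mathrm{Ho}(\mathcal{C})}(X,Y)\cong\pi(X,Y)$, and any two maps $f,g\colon X\to Y$ are left homotopic via the cylinder object $X\amalg X$ with weak equivalence $\nabla\colon X\amalg X\to X$ (a bona fide cylinder object here precisely because $\nabla$ is a weak equivalence) and homotopy $[f,g]$, so $\pi(X,Y)$ has at most one element and is nonempty exactly when $\mathrm{Hom}_{\mathcal{C}}(X,Y)\neq\emptyset$. Hence $\mathrm{Ho}(\mathcal{C})$ is a preorder on $\mathrm{ob}\,\mathcal{C}$ with an arrow $X\to Y$ exactly when $\mathcal{C}$ has one, which the universal property of $R_{\mathcal{C}}$ identifies with $P(\mathcal{C})$; and $f$ is a weak equivalence iff $\gamma(f)$ is invertible in $\mathrm{Ho}(\mathcal{C})=P(\mathcal{C})$, i.e. iff $A\sim_{P(\mathcal{C})}B$.
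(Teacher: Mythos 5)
The paper itself does not prove this theorem; it simply cites Droz--Zakharevich ([5], pp.~29--30), so your attempt is doing more work than the text it accompanies. Most of that work is sound and is essentially forced: since the statement prescribes that the acyclic fibrations are the retractions, the cofibrations must be $^{\boxslash}\mathcal{R}$ and the fibrations $(\mathrm{Cof}\cap\mathcal{W})^{\boxslash}$, your explicit factorization $A\to A\amalg B\to B$ does establish the WFS $(\mathrm{Cof},\mathcal{R})$, your arguments that every object is fibrant and cofibrant, that acyclic cofibrations are split monomorphisms, the retract argument identifying $\mathrm{Fib}\cap\mathcal{W}$ with $\mathcal{R}$, and the computation of the homotopy category via the cylinder object $X\amalg X$ are all correct, conditional on the model structure existing.

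The genuine gap is exactly the point you flag and then wave away: the factorization of an arbitrary map as an acyclic cofibration followed by a fibration (\textbf{MC5}(ii)). Invoking the small object argument is not ``legitimate because $\mathcal{C}$ is bicomplete'': the small object argument needs smallness (presentability) hypotheses on the domains of the generating maps, which an arbitrary bicomplete category does not supply, and the phrase ``acyclic cofibrations between objects of bounded presentation size'' has no meaning outside an accessible/locally presentable setting, which is not assumed here. Even granting such a setting, you give no argument that the acyclic cofibrations are generated by a \emph{set}; proving cofibrant generation of this structure would itself be the hard content, not a routine step. Nor is this ``precisely what [5] carries out'': a proof valid for every bicomplete category cannot route through the small object argument at all, and the cited construction obtains the second factorization by a direct argument tailored to the fact that the intended homotopy category is the preorder $P(\mathcal{C})$. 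Note also that the naive explicit candidates really do fail, as you observe ($(\mathrm{id}_A,f)\colon A\to A\times B$ is a split mono, and $\mathrm{pr}_B$ does have the RLP against split monos, but there is no reason $(\mathrm{id}_A,f)$ lies in $^{\boxslash}\mathcal{R}$ in a general category), so something nontrivial must replace your appeal to cofibrant generation; as it stands the central axiom of the theorem is unproved in your write-up.
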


\begin{proof}
([5], pp. 29-30).
\end{proof}{}


\subsection{Generalized Core Model Structure on Category of Neighborhoods}

Since $\mathbf{STRUCT}[\sigma_n]_{(d,0)}^{\widetilde{\mathcal{T}(\sigma_n)}}$ is finitely complete and finitely complete, the application of the Theorem 2 is immediate, and we have a generalized core model structure on $\mathbf{STRUCT}[\sigma_n]_{(d,0)}^{\widetilde{\mathcal{T}(\sigma_n)}}$ with the following characteristics.

First, the universal funtor of $\mathbf{STRUCT}[\sigma_n]_{(d,0)}^{{\widetilde{\mathcal{T}(\sigma_n)}}^{\textrm{core}}}$ is 
\begin{center}
    $\mathscr{F}_{[\sigma_n]_{(d,0)}}:\mathbf{STRUCT}[\sigma_n]_{(d,0)}^{\widetilde {\mathcal{T}(\sigma_n)}}\rightarrow\mathbf{STRUCT}[\sigma_n]_{(d,0)}^{\widetilde{\mathcal{T}(\sigma_n)}}/\equiv_{\rightarrow}.$ 
\end{center}
Thus, $w\mathbf{STRUCT}[\sigma_n]_{(d,0)}^{\widetilde{\mathcal{T}(\sigma_n)}}$ is the inverse image under $\mathscr{F}_{[\sigma_n]_{(d,0)}}$ of $\mathrm{iso}\mathbf{STRUCT}[\sigma_n]_{(d,0)}^{\widetilde{\mathcal{T}(\sigma_n)}}/\equiv_{\rightarrow}$; that is, the inverse image under $\mathscr{F}_{[\sigma_n]_{(d,0)}}$ of $\mathscr{C}_{[\sigma_n]_{(d,0)}}$; and $\tilde{f} \mathbf{STRUCT}[\sigma_n]_{(d,0)}^{\widetilde{\mathcal{T}(\sigma_n)}}$ is defined as the subcategory of retractions in $\mathbf{STRUCT}[\sigma_n]_{(d,0)}^{\widetilde{\mathcal{T}(\sigma_n)}}$.

\begin{pps}
Every object is fibrant and cofibrant in the generalized core model structure on $\mathbf{STRUCT}[\sigma_n]_{(d,0)}^{\widetilde{\mathcal{T}(\sigma_n)}}$.
\end{pps}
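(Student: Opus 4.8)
The plan is to establish cofibrancy and fibrancy separately, relying on Theorem 2 (in $\mathcal{C} := \mathbf{STRUCT}[\sigma_n]_{(d,0)}^{\widetilde{\mathcal{T}(\sigma_n)}}$ the acyclic fibrations are precisely the retractions), Proposition 1 (the cofibrations are the maps with the LLP against the acyclic fibrations, and the fibrations are the maps with the RLP against the acyclic cofibrations), and the standing assumption that every structure in $\mathcal{C}$ is finite. Throughout, write $\emptyset$ for the initial object $\widetilde{\mathcal{T}(\sigma_n)}$ and $\ast$ for the terminal object $N_0^{\mathfrak{A}}(a)$.

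Cofibrancy is immediate. Fix an object $A$; I will check that $\emptyset \to A$ has the LLP against every acyclic fibration. By Theorem 2 such a map is a retraction $p : X \to Y$, hence admits a section $\sigma : Y \to X$ with $p\sigma = \mathrm{id}_Y$. In any commuting square with $\emptyset$ in the upper-left corner and bottom edge $g : A \to Y$, the upper triangle commutes automatically because $\emptyset$ is initial, and $\sigma \circ g : A \to X$ satisfies $p \circ (\sigma g) = g$, so it is a lift. Thus $\emptyset \to A$ is a cofibration and $A$ is cofibrant. (Equivalently, by Lemma 20 the retractions form the class $\{\emptyset \to B\}^{\boxslash}$, so every $\emptyset \to A$ lies in ${}^{\boxslash}(\{\emptyset\to B\}^{\boxslash})$, which is the class of cofibrations.)

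For fibrancy, fix $A$; I must show that $A \to \ast$ has the RLP against every acyclic cofibration. The crux — and what I expect to be the main obstacle — is the following claim: \emph{every acyclic cofibration $i : C \to D$ in $\mathcal{C}$ is a split monomorphism}. Granting it, consider a commuting square with right edge $A \to \ast$ and top edge $f : C \to A$ (the other two edges being forced by terminality of $\ast$); if $\ell : D \to C$ is a retraction of $i$, then $f \circ \ell : D \to A$ is the required lift, since $(f\ell)\circ i = f$. Hence $A \to \ast$ is a fibration and $A$ is fibrant.

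It remains to prove the claim, which is where the finiteness hypothesis enters. Since $i$ is a weak equivalence, Theorem 2 gives $C \sim_{P(\mathcal{C})} D$, so there is some morphism $s : D \to C$ in $\mathcal{C}$. The endomorphism monoid of the finite structure $C$ is finite, so some power $e := (s \circ i)^{k}$ with $k \ge 1$ is idempotent; because $\mathcal{C}$ has equalizers (Proposition 8), this idempotent splits: writing $\iota : C' \to C$ for the equalizer of $(e,\mathrm{id}_C)$, there is $\pi : C \to C'$ with $\iota\pi = e$ and $\pi\iota = \mathrm{id}_{C'}$. In particular $\pi$ is a retraction in $\mathcal{C}$, hence an acyclic fibration by Theorem 2. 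Setting $b := \pi \circ s \circ (i\circ s)^{k-1} : D \to C'$, a short computation gives $b \circ i = \pi \circ (s\circ i)^{k} = \pi\circ\iota\circ\pi = \pi$, so the square with top edge $\mathrm{id}_C$, left edge $i$, right edge $\pi$ and bottom edge $b$ commutes. Since $i$ is a cofibration and $\pi$ an acyclic fibration, \textbf{MC4} yields a lift $\ell : D \to C$ with $\ell \circ i = \mathrm{id}_C$; this $\ell$ is the sought retraction of $i$. This proves the claim and hence the proposition.
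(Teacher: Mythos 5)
Your proof is correct, and it is considerably more self-contained than the paper's own two-line argument. On cofibrancy you and the paper do essentially the same thing: the paper simply asserts that every map $\widetilde{\mathcal{T}(\sigma_n)} \rightarrow N_d^{\mathfrak{A}}(\vec{a})$ is a cofibration, and your lifting computation against retractions (which are exactly the acyclic fibrations by Theorem 2, equivalently via Lemma 20) is the justification of that assertion. The real divergence is in the fibrancy half. The paper disposes of it by citing Lemma 20, but Lemma 20 by itself only says that maps $A \rightarrow *$ have the RLP against \emph{sections}; to conclude fibrancy one still needs to know that every acyclic cofibration in the core model structure is a section (split monomorphism), a fact the paper leaves implicit (presumably to be extracted from the Droz--Zakharevich construction in [5]). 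This is exactly the claim you isolate and prove from scratch: a weak equivalence $i:C\rightarrow D$ admits some $s:D\rightarrow C$ because $C\sim_{P(\mathcal{C})}D$, finiteness of the structures makes some power $(s\circ i)^k$ idempotent, the idempotent splits through an equalizer (available by Proposition 8), the resulting $\pi$ is a retraction and hence an acyclic fibration by Theorem 2, and \textbf{MC4} then produces the retraction $\ell$ of $i$. Your computation $b\circ i=\pi$ and the subsequent lifting argument are correct. What your route buys is an explicit, finiteness-based proof of the key structural fact the paper only gestures at; what the paper's route buys is brevity, at the cost of an argument that, as written, does not actually close the gap between ``RLP against sections'' and ``RLP against acyclic cofibrations.''
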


\begin{proof}
Since all morphisms $\widetilde{\mathcal{T}(\sigma_n)} \rightarrow N_{d}^{\mathfrak{A}}(\vec{a})$ in $\mathbf{STRUCT}[\sigma_n]_{(d,0)}^{\widetilde{\mathcal{T}(\sigma_n)}}$ are cofibrations, all $\sigma_n$-structures are cofibrants in $\mathbf{STRUCT}[\sigma_n]_{(d,0)}^{{\widetilde{\mathcal{T}(\sigma_n)}}^{\textrm{core}}}$. Proof that all $\sigma_n$-structures are fibrants in $\mathbf{STRUCT}[\sigma_n]_{(d,0)}^{{\widetilde{\mathcal{T}(\sigma_n)}}^{\textrm{core}}}$  is given showing that a morphism $g$ of a $\sigma_n$-structure $N_{d}^{\mathfrak{A}}(\vec{a})$ for the terminal object in $\mathbf{STRUCT}[\sigma_n]_{(d,0)}^{\widetilde{\mathcal{T}(\sigma_n)}}$ is a fibration. But, this follows from Lemma 20.
\end{proof}

\begin{cor}
The homotopy category of $\mathbf{STRUCT}[\sigma_n]_{(d,0)}^{\widetilde{\mathcal{T}(\sigma_n)}}$ is $\mathbf{STRUCT}[\sigma_n]_{(d,0)}^{\widetilde{\mathcal{T}(\sigma_n)}} / \equiv_{\rightarrow}$.
\end{cor}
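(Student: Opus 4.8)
The plan is to invoke Theorem 2 (Droz--Zakharevich) directly, since all the hypotheses have just been verified. By Proposition 11, the category $\mathbf{STRUCT}[\sigma_n]_{(d,0)}^{\widetilde{\mathcal{T}(\sigma_n)}}$ is finitely complete and finitely cocomplete, hence bicomplete, so Theorem 2 applies: there is a model structure $\mathbb{C}^{\mathrm{core}}$ on it whose homotopy category is $P(\mathbf{STRUCT}[\sigma_n]_{(d,0)}^{\widetilde{\mathcal{T}(\sigma_n)}})$. So the corollary reduces to identifying that preorder-completion $P(\mathcal{C})$ with the poset $\mathbf{STRUCT}[\sigma_n]_{(d,0)}^{\widetilde{\mathcal{T}(\sigma_n)}} / \equiv_{\rightarrow}$, which was constructed in \S4.1.

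First I would recall the definition of $P(\mathcal{C})$: it has the same objects as $\mathcal{C}$, and a unique arrow $X \to Y$ precisely when $\mathrm{Hom}_{\mathcal{C}}(X,Y) \neq \emptyset$. Next I would observe that $P(\mathcal{C})$ is a preorder, not yet a poset, and that passing to its poset reflection means quotienting objects by the relation ``$X \to Y$ and $Y \to X$'', which is exactly $\equiv_{\rightarrow}$. The point I want to make is that Theorem 2 delivers the homotopy category as $P(\mathcal{C})$, but since a model category's homotopy category is only defined up to equivalence, and $P(\mathcal{C})$ is equivalent (indeed, canonically so, via the quotient functor that identifies isomorphic objects) to its skeleton $\mathcal{C}/\equiv_{\rightarrow}$, we may just as well take the homotopy category to be $\mathbf{STRUCT}[\sigma_n]_{(d,0)}^{\widetilde{\mathcal{T}(\sigma_n)}} / \equiv_{\rightarrow}$. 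Concretely, $X \sim_{P(\mathcal{C})} Y$ iff $X \to Y$ and $Y \to X$ in $\mathcal{C}$ iff $[X] = [Y]$ in $\mathcal{C}/\equiv_{\rightarrow}$, which is precisely the characterization of weak equivalences given by Theorem 2.

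I would then tie this to the functor $\mathscr{F}_{[\sigma_n]_{(d,0)}}$ constructed in \S4.1: this functor already realizes the passage $\mathcal{C} \to \mathcal{C}/\equiv_{\rightarrow}$, it is the canonical functor through which every functor to a preorder (in particular to a poset) factors, and its target is the poset $\mathscr{C}(\mathbf{STRUCT}[\sigma_n]_{(d,0)}^{\widetilde{\mathcal{T}(\sigma_n)}})$. By Theorem 2 the localization $\gamma$ of the model structure inverts exactly the weak equivalences, i.e.\ exactly the morphisms sent to isomorphisms by $\mathscr{F}_{[\sigma_n]_{(d,0)}}$, so $\gamma$ and $\mathscr{F}_{[\sigma_n]_{(d,0)}}$ have the same universal property, whence $\mathrm{Ho}(\mathbf{STRUCT}[\sigma_n]_{(d,0)}^{\widetilde{\mathcal{T}(\sigma_n)}}) \cong \mathbf{STRUCT}[\sigma_n]_{(d,0)}^{\widetilde{\mathcal{T}(\sigma_n)}}/\equiv_{\rightarrow}$. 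Optionally, I would remark that Proposition 12 (every object is fibrant and cofibrant) gives an independent sanity check: by Proposition 9 and Definition 12, $\mathrm{Hom}_{\mathrm{Ho}(\mathcal{C})}(X,Y) = \pi(RQX,RQY) = \pi(X,Y)$, and in this model structure left/right homotopy of maps into a fibrant-cofibrant object collapses (any two parallel maps $X\to Y$ are homotopic once $Y$ is a terminal-like object in the relevant sense, or more simply the hom-sets of the homotopy category are one-point sets exactly when there is an arrow), recovering the one-point hom-sets of the poset.

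The only real subtlety — and the step I expect to need the most care — is the distinction between ``the homotopy category is $P(\mathcal{C})$'' (a preorder) and ``the homotopy category is $\mathcal{C}/\equiv_{\rightarrow}$'' (its poset reflection / skeleton); these agree up to (canonical) equivalence of categories but not on the nose, so I would state the conclusion as an equivalence, or note explicitly that we identify $P(\mathcal{C})$ with its skeleton. Everything else is a direct citation of Theorem 2 together with the constructions of \S4.1, so no new computation is required.
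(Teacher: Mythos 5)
Your proof is correct, but it takes a genuinely different route from the paper's. The paper proves this corollary by citing its Corollary 2 (the Dwyer--Spalinski fact that the canonical functor $\mathcal{C}_{cf} \rightarrow \mathcal{C}_{cf}/\sim$ has the universal property of the localization, so $\mathrm{Ho}\,\mathcal{C}_{cf} \cong \mathcal{C}_{cf}/\sim$), which applies to the whole category because the immediately preceding proposition shows every object is fibrant and cofibrant; the further identification of the homotopy-class quotient with the poset $\mathbf{STRUCT}[\sigma_n]_{(d,0)}^{\widetilde{\mathcal{T}(\sigma_n)}}/\equiv_{\rightarrow}$ is left implicit (it rests on the later facts that any two parallel morphisms are homotopic and that homotopy equivalence coincides with homomorphic equivalence). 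You instead read the answer directly off Theorem 2 (Droz--Zakharevich): the homotopy category of the core model structure is $P(\mathcal{C})$, and you then identify $P(\mathcal{C})$ with its poset reflection/skeleton via the functor $\mathscr{F}_{[\sigma_n]_{(d,0)}}$ of \S 4.1. Your route is more direct -- it needs only finite (co)completeness (the paper's Proposition 8; your ``Proposition 11'' and ``Proposition 12'' are harmless numbering mismatches) and no fibrancy/cofibrancy input -- and it has the merit of making explicit the one real subtlety, namely that $P(\mathcal{C})$ and the poset quotient agree only up to canonical equivalence rather than on the nose, a point the paper glosses over. The paper's route, conversely, stays inside its Dwyer--Spalinski machinery and sets up the homotopy-equals-homomorphic-equivalence identification that Theorem 3 uses afterwards. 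The only wobbly spot in your write-up is the optional ``sanity check'': the correct reason any two parallel maps are homotopic here is that $A \coprod A \rightarrow A$ factors through a very good cylinder object (the paper's Proposition 10), not a ``terminal-like'' property of the codomain; since you flagged that remark as optional, it does not affect the validity of your argument.
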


\begin{proof}
Corollary 2.
\end{proof}

\begin{pps}
Any two morphisms with equal domains and codomains are homotopic in the generalized core model structure on $\mathbf{STRUCT}[\sigma_n]_{(d,0)}^{\widetilde{\mathcal{T}(\sigma_n)}}$.
\end{pps}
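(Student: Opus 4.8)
The plan is to reduce the statement to the combination of two earlier facts: every object of $\mathbf{STRUCT}[\sigma_n]_{(d,0)}^{\widetilde{\mathcal{T}(\sigma_n)}}$ is both fibrant and cofibrant (Proposition 13), so that for any two objects $A$ and $X$ the left and right homotopy relations on $\mathrm{Hom}(A,X)$ coincide with the single relation $\sim$ discussed after Lemma 13, and moreover (by the Remark following Proposition 15, or by Proposition 16) the canonical functor $\gamma$ induces a bijection $\pi(A,X) \cong \mathrm{Hom}_{\mathrm{Ho}(\mathcal{C})}(A,X)$. Since by Corollary 5 the homotopy category is $\mathbf{STRUCT}[\sigma_n]_{(d,0)}^{\widetilde{\mathcal{T}(\sigma_n)}}/\equiv_{\rightarrow}$, which is a preorder (indeed a poset, as noted in \S4.1), every hom-set $\mathrm{Hom}_{\mathrm{Ho}(\mathcal{C})}(A,X)$ has at most one element. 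Therefore $\pi(A,X)$ has at most one element, i.e. any two maps $A \rightarrow X$ are homotopic.

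Concretely, I would argue as follows. First, fix objects $A = N_d^{\mathfrak{A}}(\vec a)$ and $X = N_d^{\mathfrak{B}}(\vec b)$ (the cases involving $\widetilde{\mathcal{T}(\sigma_n)}$ or the $0$-neighborhoods are handled identically, and in those cases the hom-set in $\mathcal{C}$ already has a unique element by the universal properties). By Proposition 13 both $A$ and $X$ are cofibrant and fibrant, so Lemma 12 and the discussion immediately after it tell us that $\sim_l$ and $\sim_r$ agree on $\mathrm{Hom}_{\mathcal{C}}(A,X)$ and we write the common relation $\sim$. By Proposition 16 the map $\gamma:\mathrm{Hom}_{\mathcal{C}}(A,X) \rightarrow \mathrm{Hom}_{\mathrm{Ho}(\mathcal{C})}(A,X)$ is surjective and descends to a bijection $\pi(A,X) \xrightarrow{\ \cong\ } \mathrm{Hom}_{\mathrm{Ho}(\mathcal{C})}(A,X)$. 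Now invoke Corollary 5: $\mathrm{Ho}(\mathcal{C}) = \mathbf{STRUCT}[\sigma_n]_{(d,0)}^{\widetilde{\mathcal{T}(\sigma_n)}}/\equiv_{\rightarrow}$, whose underlying structure is a partial order; hence $\mathrm{Hom}_{\mathrm{Ho}(\mathcal{C})}(A,X)$ is either empty or a singleton. Pulling this back through the bijection, $\pi(A,X)$ has at most one element, so any two maps $f,g:A \rightarrow X$ in $\mathcal{C}$ satisfy $[f] = [g]$ in $\pi(A,X)$, i.e. $f \sim g$.

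I expect the only genuine subtlety to be bookkeeping about which objects are allowed and making sure the reduction to $\mathrm{Ho}(\mathcal{C})$ being a preorder is airtight — in particular that $\gamma$ really does induce a \emph{bijection} (not merely a surjection) on hom-sets here, which is exactly what Proposition 16 supplies given that both objects are fibrant-cofibrant. A secondary point worth a sentence is that the statement is about homotopy of maps as defined via cylinder/path objects (Definitions 10 and 14), and since domain and codomain are fibrant-cofibrant the two notions collapse, so there is no ambiguity in ``homotopic''. No hard calculation is needed; the argument is a short chain of citations to the model-category machinery developed in Section 2 together with Corollary 5 and Proposition 13.
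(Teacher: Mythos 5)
Your proof is correct, but it takes a genuinely different route from the paper's. The paper argues bottom-up by exhibiting the homotopy explicitly: in the core model structure the fold map $N \coprod N \rightarrow N$ is a retraction, hence an acyclic fibration by Theorem 2, so the coproduct $N \coprod N$ (with the identity as structure map) is a very good cylinder object for $N$; consequently, for any parallel pair $f,g\colon N \rightarrow M$ the sum map $f+g\colon N\coprod N \rightarrow M$ is itself a left homotopy from $f$ to $g$, and the fibrant-cofibrant statement (the paper's Proposition 9, your ``Proposition 13'') together with Lemma 12 removes the left/right distinction. You instead argue top-down: all objects are fibrant and cofibrant, $\gamma$ induces a bijection $\pi(A,X)\cong \mathrm{Hom}_{\mathrm{Ho}(\mathcal{C})}(A,X)$ (the paper's Proposition 4, your ``Proposition 16''), and the homotopy category is thin, so $\pi(A,X)$ has at most one element. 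This is valid, with one caveat about where the thinness comes from: you should cite Theorem 2 (Droz--Zakharevich: the homotopy category of the core model structure is the preorder $P(\mathcal{C})$) rather than Corollary 5, because the paper justifies Corollary 5 only via Corollary 2 ($\mathcal{C}_{cf}/\sim \;\cong\; \mathrm{Ho}\,\mathcal{C}_{cf}$), and identifying $\mathcal{C}/\sim$ with the poset $\mathbf{STRUCT}[\sigma_n]_{(d,0)}^{\widetilde{\mathcal{T}(\sigma_n)}}/\equiv_{\rightarrow}$ already presupposes knowing that parallel maps are homotopic (and Proposition 11) --- i.e., the very statement you are proving. Anchored in Theorem 2 the circularity disappears. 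The trade-off: the paper's proof is elementary, constructive (it names the homotopy), and logically prior to the computation of the homotopy category, which makes it the natural order of development; yours is a short formal deduction that outsources the real content to Droz--Zakharevich's identification of $\mathrm{Ho}$. Your side remarks (handling of $\widetilde{\mathcal{T}(\sigma_n)}$ and the $0$-neighborhoods, and the collapse of left/right homotopy) are fine but not needed beyond what Proposition 9 already gives.
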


\begin{proof}
Since the coproduct of an object with itself is a very good cylinder object, any two morphisms are left homotopic. Because of Proposition 9, we do not need to discriminate between left and right homotopies in the generalized core model structure on $\mathbf{STRUCT}[\sigma_n]_{(d,0)}^{\widetilde{\mathcal{T}(\sigma_n)}}$. 
\end{proof}

\begin{pps}
    Homomorphic equivalence in the category $\mathbf{STRUCT}[\sigma_n]_{(d,0)}^{\widetilde{\mathcal{T}(\sigma_n)}}$ coincides with homotopic equivalence in the model structure $\mathbf{STRUCT}[\sigma_n]_{(d,0)}^{{\widetilde{\mathcal{T}(\sigma_n)}}^{\mathrm{core}}}$.
\end{pps}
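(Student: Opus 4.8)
The plan is to show the two equivalence relations agree by chasing through the definitions and the results already established for the generalized core model structure. Recall that two objects $N_{d}^{\mathfrak{A}}(\vec{a})$ and $N_{d}^{\mathfrak{B}}(\vec{b})$ are homomorphically equivalent in $\mathbf{STRUCT}[\sigma_n]_{(d,0)}^{\widetilde{\mathcal{T}(\sigma_n)}}$ exactly when $N_{d}^{\mathfrak{A}}(\vec{a}) \equiv_{\rightarrow} N_{d}^{\mathfrak{B}}(\vec{b})$, i.e.\ when there exist morphisms in both directions. On the other side, two objects are homotopy equivalent in the model structure $\mathbf{STRUCT}[\sigma_n]_{(d,0)}^{{\widetilde{\mathcal{T}(\sigma_n)}}^{\mathrm{core}}}$ when there exist morphisms $f$ and $g$ between them whose composites are homotopic to the respective identities. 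So the statement to prove is: $N_{d}^{\mathfrak{A}}(\vec{a}) \equiv_{\rightarrow} N_{d}^{\mathfrak{B}}(\vec{b})$ if and only if $N_{d}^{\mathfrak{A}}(\vec{a})$ and $N_{d}^{\mathfrak{B}}(\vec{b})$ are homotopy equivalent.

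First I would handle the forward direction. Suppose $N_{d}^{\mathfrak{A}}(\vec{a}) \equiv_{\rightarrow} N_{d}^{\mathfrak{B}}(\vec{b})$, so there are morphisms $f$ and $g$ in both directions. By Proposition~9 every object in this model structure is both fibrant and cofibrant, so Lemma~10 applies: a morphism between fibrant-cofibrant objects is a weak equivalence if and only if it has a homotopy inverse. By Theorem~2 (the characterization of weak equivalences in the generalized core model structure), $f$ is a weak equivalence precisely because $N_{d}^{\mathfrak{A}}(\vec{a}) \sim_{P(\mathcal{C})} N_{d}^{\mathfrak{B}}(\vec{b})$, which holds since morphisms exist in both directions. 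Hence $f$ has a homotopy inverse, so the two objects are homotopy equivalent. For the converse, suppose they are homotopy equivalent, so some $f$ admits a homotopy inverse; then by Lemma~10 again $f$ is a weak equivalence, and by Theorem~2 this forces $N_{d}^{\mathfrak{A}}(\vec{a}) \sim_{P(\mathcal{C})} N_{d}^{\mathfrak{B}}(\vec{b})$, i.e.\ there are morphisms in both directions, which is exactly $\equiv_{\rightarrow}$-equivalence, i.e.\ homomorphic equivalence.

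An even cleaner route, which I would present in parallel or as the main argument, goes through the homotopy category directly: by Corollary~5 (or Corollary~2), $\mathrm{Ho}(\mathbf{STRUCT}[\sigma_n]_{(d,0)}^{\widetilde{\mathcal{T}(\sigma_n)}}) = \mathbf{STRUCT}[\sigma_n]_{(d,0)}^{\widetilde{\mathcal{T}(\sigma_n)}}/\equiv_{\rightarrow}$, which is a poset, hence two objects are isomorphic there if and only if they lie in the same $\equiv_{\rightarrow}$-class. By Proposition~8, $\gamma(f)$ is an isomorphism in the homotopy category if and only if $f$ is a weak equivalence; and since all objects are fibrant-cofibrant, $f$ is a weak equivalence iff it is a homotopy equivalence by Lemma~10. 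Stringing these together: homotopy equivalence of the objects $\Leftrightarrow$ existence of a weak equivalence between them $\Leftrightarrow$ their images are isomorphic in $\mathbf{STRUCT}[\sigma_n]_{(d,0)}^{\widetilde{\mathcal{T}(\sigma_n)}}/\equiv_{\rightarrow}$ $\Leftrightarrow$ $N_{d}^{\mathfrak{A}}(\vec{a}) \equiv_{\rightarrow} N_{d}^{\mathfrak{B}}(\vec{b})$ $\Leftrightarrow$ homomorphic equivalence.

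I do not anticipate a serious obstacle here, since the heavy lifting is done by Theorem~2, Proposition~9, and Lemma~10. The one point that needs a little care is making sure the notion of ``homotopy equivalence'' is unambiguous in this setting: a priori one might worry about left versus right homotopy, or about needing fibrant/cofibrant replacements, but Proposition~9 removes fibrant/cofibrant replacement issues and Proposition~11 shows left and right homotopy coincide (indeed all parallel pairs are homotopic), so ``$g \circ f \sim \mathrm{id}$'' is well-defined and in fact automatic once $f$ and $g$ exist in both directions. That last observation is worth flagging explicitly, because it shows the homotopy-inverse condition collapses to the bare existence of morphisms in both directions, making the equivalence with homomorphic equivalence transparent.
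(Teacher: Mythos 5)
Your argument is correct and is essentially the paper's own proof, which simply cites the Whitehead-type characterization (weak equivalence iff homotopy inverse, for fibrant--cofibrant objects) together with the facts that all objects are fibrant and cofibrant and that weak equivalences in the core model structure are exactly the morphisms between $\sim_{P(\mathcal{C})}$-equivalent objects; your closing observation that Proposition~10 makes the homotopy-inverse condition automatic is a nice explicit touch. Only the cross-references need fixing: the homotopy-inverse lemma is Lemma~14 (not Lemma~10), the statement that $\gamma(f)$ is an isomorphism iff $f$ is a weak equivalence is Proposition~3 (not Proposition~8), and the ``all parallel morphisms are homotopic'' result is Proposition~10 (not Proposition~11).
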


\begin{proof}
Follows from Lemma 14.
\end{proof}

\begin{dfn}[Weak $k$-equivalence]
Let $\mathbf{STRUCT}[\sigma_n]_{(d,0)}^{{\widetilde{\mathcal{T}(\sigma_n)}}^{\mathrm{core}}}$ be the core model structure on $\mathbf{STRUCT}[\sigma_n]_{(d,0)}^{\widetilde{\mathcal{T}(\sigma_n)}}$.
A weak $k$-equivalence in $\mathbf{STRUCT}[\sigma_n]_{(d,0)}^{{\widetilde{\mathcal{T}(\sigma_n)}}^{\mathrm{core}}}$ is a weak equivalence between two $\sigma_n$-structures $N_{d}^{\mathfrak{A}}(\vec{a})$ and $N_{d}^{\mathfrak{B}}(\vec{b})$ of $\mathbf{STRUCT}[\sigma_n]_{(d,0)}^{\widetilde{\mathcal{T}(\sigma_n)}}$ which are $k$-homomorphically equivalent.
\end{dfn}

\noindent In other words, a weak $k$-equivalence in $\mathbf{STRUCT}[\sigma_n]_{(d,0)}^{{\widetilde{\mathcal{T}(\sigma_n)}}^{\mathrm{core}}}$ is a morphism in $\mathbf{STRUCT}[\sigma_n]_{(d,0)}^{{(\widetilde{\mathcal{T}(\sigma_n)})}^k}$ which induces isomorphisms in $\mathbf{STRUCT}[\sigma_n]_{(d,0)}^{{(\widetilde{\mathcal{T}(\sigma_n)})}^k} / \equiv_{\rightarrow^k}$.

\begin{pps}
    $k$-Homomorphic equivalence in the category $\mathbf{STRUCT}[\sigma_n]_{(d,0)}^{\widetilde{\mathcal{T}(\sigma_n)}}$ coincides with $k$-homotopic equivalence in the model structure $\mathbf{STRUCT}[\sigma_n]_{(d,0)}^{{\widetilde{\mathcal{T}(\sigma_n)}}^{\mathrm{core}}}$.
\end{pps}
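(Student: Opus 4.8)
The plan is to mirror the argument used for Proposition~11 (the $X=\emptyset$, unrestricted-tree-depth case), but carried out inside the subcategory $\mathbf{STRUCT}[\sigma_n]_{(d,0)}^{(\widetilde{\mathcal{T}(\sigma_n)})_k}$ together with the refined equivalence $\equiv_{\rightarrow^k}$. The key observation is that Proposition~11 was derived from Lemma~14 (a functor inverting weak equivalences identifies left/right-homotopic maps), combined with Corollary~\ref{cor:hocat} (or rather the corollary identifying the homotopy category with $\mathbf{STRUCT}[\sigma_n]_{(d,0)}^{\widetilde{\mathcal{T}(\sigma_n)}}/\equiv_{\rightarrow}$) and Proposition~10 (any two parallel morphisms are homotopic). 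So the task reduces to re-establishing the analogues of those three ingredients one level down.

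First I would note that the generalized core model structure restricts: by Theorem~2 the core model structure exists on any bicomplete category, and the discussion in \S4.3 shows that $\mathbf{STRUCT}[\sigma_n]_{(d,0)}^{(\widetilde{\mathcal{T}(\sigma_n)})_k}$ is again finitely complete and finitely cocomplete (it is closed under the relevant products, coproducts, equalizers and coequalizers since tree-depth $\leq k$ is preserved under the operations that matter, or one simply takes cores), with initial object $\widetilde{\mathcal{T}(\sigma_n)}$ and terminal object the relevant $0$-neighborhood. Applying Theorem~2 to this subcategory yields a model structure whose weak equivalences are exactly $(\mathscr{F}^k_{[\sigma_n]_{(d,0)}})^{-1}(\mathrm{iso})$, i.e.\ precisely the morphisms between $\equiv_{\rightarrow^k}$-equivalent objects --- which, by Lemma~\ref{lem:corek-charac} ($\mathfrak{A}\rightarrow_X^k\mathfrak{B}$ iff $\mathbf{Core}_X^k(\mathfrak{A})\rightarrow_X\mathfrak{B}$) and the semilattice structure of $\mathscr{C}_X^k$, is the same as the notion introduced in Definition~\ref{dfn:weakk} of weak $k$-equivalence. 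Hence the homotopy category of this $k$-truncated model structure is $\mathbf{STRUCT}[\sigma_n]_{(d,0)}^{(\widetilde{\mathcal{T}(\sigma_n)})_k}/\equiv_{\rightarrow^k}$, exactly as in Corollary~\ref{cor:hocatk-version}.

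Next I would reprove the analogue of Proposition~9 and Proposition~10 in this setting: every object is both fibrant and cofibrant (every map out of $\widetilde{\mathcal{T}(\sigma_n)}$ is a cofibration, and every map into the terminal object is a fibration by Lemma~20), and the coproduct of an object with itself is a very good cylinder object, so any two parallel morphisms are left-homotopic, hence homotopic. Then, applying Lemma~14 to the canonical functor $\mathscr{F}^k_{[\sigma_n]_{(d,0)}}$ --- which by construction sends weak $k$-equivalences to isomorphisms --- gives that homotopic (equivalently, parallel) morphisms agree after $\mathscr{F}^k_{[\sigma_n]_{(d,0)}}$. Putting this together exactly as in the proof of Proposition~11: $N_d^{\mathfrak{A}}(\vec a)$ and $N_d^{\mathfrak{B}}(\vec b)$ are $k$-homotopically equivalent iff they become isomorphic in the homotopy category $\mathbf{STRUCT}[\sigma_n]_{(d,0)}^{(\widetilde{\mathcal{T}(\sigma_n)})_k}/\equiv_{\rightarrow^k}$, iff they are $\equiv_{\rightarrow^k}$-equivalent, iff (by Lemma~\ref{lem:corek-charac} and unwinding Definition~\ref{dfn:kcore}) they are $k$-homomorphically equivalent over $\emptyset$, which is the claim.

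The main obstacle I anticipate is verifying that $\mathbf{STRUCT}[\sigma_n]_{(d,0)}^{(\widetilde{\mathcal{T}(\sigma_n)})_k}$ is genuinely bicomplete --- in particular, that the bound ``tree-depth at most $k$'' (which is what pins down membership in the subcategory) is stable under the finite limits and colimits one needs, or else that one can always replace a limit/colimit by its $k$-core without leaving the subcategory and without changing the $\equiv_{\rightarrow^k}$-class. Products may increase tree-depth, so the honest move is probably to define limits and colimits in the subcategory by first forming them in the ambient category and then applying $\mathbf{Core}^k$; one must then check this still satisfies the relevant universal properties up to $\equiv_{\rightarrow^k}$, which is exactly the content of Lemma~\ref{lem:corek-semilattice} (the $k$-core l.u.b.\ agrees with the ambient l.u.b.). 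Once bicompleteness is secured, the rest is a faithful transcription of the unrestricted case, with $\rightarrow$ replaced by $\rightarrow^k$ throughout.
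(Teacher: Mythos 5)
Your route diverges from the paper's, and as written it rests on a prerequisite that is neither proved by you nor contained in the paper. The paper does not construct a second, $k$-truncated model structure: Proposition 12 is read off inside the single core model structure $\mathbf{STRUCT}[\sigma_n]_{(d,0)}^{{\widetilde{\mathcal{T}(\sigma_n)}}^{\mathrm{core}}}$ already furnished by Theorem 2, because the definition of weak $k$-equivalence stipulates that it is a weak equivalence whose source and target are $k$-homomorphically equivalent; given Proposition 11 (homotopic equivalence coincides with homomorphic equivalence), the proposition follows by unwinding that definition together with the definition of $k$-homomorphism. Your plan instead applies Theorem 2 to the subcategory $\mathbf{STRUCT}[\sigma_n]_{(d,0)}^{(\widetilde{\mathcal{T}(\sigma_n)})_k}$, which obliges you to show that this subcategory is finitely complete and cocomplete. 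Contrary to what you assert, the discussion of $k$-homomorphism as a partial order in the paper does not establish this; it only defines the quotient poset and the canonical functor $\mathscr{F}^k_{[\sigma_n]_{(d,0)}}$. You flag the difficulty yourself, but your proposed repair does not close it: replacing ambient limits and colimits by their $k$-cores produces objects in the right $\equiv_{\rightarrow^k}$-class, yet a universal property that holds only ``up to $\equiv_{\rightarrow^k}$'' is not a universal property, and MC1 (hence the hypothesis of Theorem 2) demands strict finite limits and colimits. Lemma 28 concerns least upper bounds in the poset $(\mathscr{C}_X^k,\rightarrow_X)$ and says nothing about categorical limits or colimits, so it cannot carry that weight. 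There is also a mismatch of target: even if your truncated model structure existed, the proposition is about $k$-homotopic equivalence in $\mathbf{STRUCT}[\sigma_n]_{(d,0)}^{{\widetilde{\mathcal{T}(\sigma_n)}}^{\mathrm{core}}}$, so you would still owe an argument identifying homotopy equivalence in your new structure with the paper's notion of weak $k$-equivalence in the old one.

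The ingredients you cite for the second half of your argument (every object fibrant and cofibrant, the coproduct of an object with itself as a very good cylinder object, Lemma 14 applied to the canonical functor inverting weak equivalences) are the right ones, but they already do the job in the ambient core model structure without any truncation: a weak $k$-equivalence is by definition a weak equivalence between $k$-homomorphically equivalent neighborhoods, so Proposition 11 plus the definition of $\rightarrow^k$ (equivalently, its characterization through $\mathbf{Core}^k$ or through primitive-positive sentences) yields the statement directly. That is exactly the paper's one-line proof; your detour through a new model structure introduces the bicompleteness obligation as a genuine gap.
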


\begin{proof}
Follows from Proposition 11 and the definition of $k$-homomorphism. 
\end{proof}




\begin{thm}[MAIA]
	There is a Quillen model structure $\mathbb{M}$ on $\mathbf{STRUCT}[\sigma_n]_{(d,0)}^{\widetilde{\mathcal{T}(\sigma_n)}}$ such that the homotopic equivalences in $\mathbb{M}$ coincides with the homomorphic equivalences in $\mathbf{STRUCT}[\sigma_n]_{(d,0)}^{\widetilde{\mathcal{T}(\sigma_n)}}$, and such that for every $k$-homotopic equivalence, $\sim_k$, and every $k$-logical equivalence, $\equiv_k$, $N_{d}^{\mathfrak{A}}(\vec{a}) \sim_k N_{d}^{\mathfrak{B}}(\vec{b})$ if and only if $N_{d}^{\mathfrak{A}} (\vec{a}) \equiv_k N_{d}^{\mathfrak{B}}(\vec{b})$, for every primitive positive sentence with quantifier-rank $k$.
\end{thm}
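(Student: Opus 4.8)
The plan is to take for $\mathbb{M}$ not a freshly built model structure but the generalized core model structure $\mathbf{STRUCT}[\sigma_n]_{(d,0)}^{{\widetilde{\mathcal{T}(\sigma_n)}}^{\mathrm{core}}}$ already obtained above by applying Droz--Zakharevich's Theorem 2; its hypotheses are met because $\mathbf{STRUCT}[\sigma_n]_{(d,0)}^{\widetilde{\mathcal{T}(\sigma_n)}}$ is finitely complete and finitely cocomplete (Proposition 8). Once $\mathbb{M}$ is fixed, the whole task reduces to reading off the two advertised properties from results already established in this section; in that sense the proof is essentially an assembly.

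The first clause is precisely Proposition 11: homotopic equivalence in $\mathbb{M}$ coincides with homomorphic equivalence of $\sigma_n$-structures. For completeness I would recall the mechanism: by Proposition 9 every object of $\mathbb{M}$ is both fibrant and cofibrant, so a morphism between two objects is a homotopy equivalence exactly when it has a homotopy inverse, which by Lemma 14 happens exactly when it is a weak equivalence; and in the core model structure a morphism $N_{d}^{\mathfrak{A}}(\vec{a}) \rightarrow N_{d}^{\mathfrak{B}}(\vec{b})$ is a weak equivalence iff $N_{d}^{\mathfrak{A}}(\vec{a}) \rightleftarrows N_{d}^{\mathfrak{B}}(\vec{b})$, and such a morphism always exists once the two structures are homomorphically equivalent. (Equivalently, $\mathrm{Ho}(\mathbb{M}) = \mathbf{STRUCT}[\sigma_n]_{(d,0)}^{\widetilde{\mathcal{T}(\sigma_n)}}/\equiv_{\rightarrow}$ by Corollary 5, and being isomorphic in this poset is exactly being homomorphically equivalent.)

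For the second clause I would run a short chain of biconditionals. By Proposition 12, $N_{d}^{\mathfrak{A}}(\vec{a}) \sim_k N_{d}^{\mathfrak{B}}(\vec{b})$ in $\mathbb{M}$ if and only if $N_{d}^{\mathfrak{A}}(\vec{a})$ and $N_{d}^{\mathfrak{B}}(\vec{b})$ are $k$-homomorphically equivalent as $\sigma_n$-structures, i.e. $N_{d}^{\mathfrak{A}}(\vec{a}) \rightleftarrows^k N_{d}^{\mathfrak{B}}(\vec{b})$. Unfolding $\rightleftarrows^k$ into the conjunction of $N_{d}^{\mathfrak{A}}(\vec{a}) \rightarrow^k N_{d}^{\mathfrak{B}}(\vec{b})$ and $N_{d}^{\mathfrak{B}}(\vec{b}) \rightarrow^k N_{d}^{\mathfrak{A}}(\vec{a})$ and applying the logical characterization of $k$-homomorphism (Lemma 28) to each direction, one obtains that this holds iff $N_{d}^{\mathfrak{A}}(\vec{a})$ and $N_{d}^{\mathfrak{B}}(\vec{b})$ satisfy exactly the same primitive-positive sentences of quantifier-rank $k$ --- which is precisely the assertion $N_{d}^{\mathfrak{A}}(\vec{a}) \equiv_k N_{d}^{\mathfrak{B}}(\vec{b})$ read on primitive-positive sentences of quantifier-rank $k$. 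Concatenating the biconditionals yields the claim. The promised passage from sentences to formulas costs nothing: a formula $\varphi(\vec{x})$ with $|\vec{x}| = m$ is a sentence over the vocabulary expanded by $m$ further constants, and the relevant neighborhoods then live in the analogous category $\mathbf{STRUCT}[\sigma_{n+m}]_{(d,0)}^{\widetilde{\mathcal{T}(\sigma_{n+m})}}$, to which the same argument applies verbatim.

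The step I expect to carry the real weight is Proposition 12 --- more precisely, making sure the definition of weak $k$-equivalence is read correctly, so that localizing $\mathbf{STRUCT}[\sigma_n]_{(d,0)}^{{(\widetilde{\mathcal{T}(\sigma_n)})}^k}$ at the weak $k$-equivalences genuinely returns the poset $\mathbf{STRUCT}[\sigma_n]_{(d,0)}^{{(\widetilde{\mathcal{T}(\sigma_n)})}^k}/\equiv_{\rightarrow^k}$, and hence that $\sim_k$ collapses exactly the $\rightleftarrows^k$-classes, nothing coarser or finer. Everything else is bookkeeping: the model structure is handed to us by Theorem 2, universal fibrancy and cofibrancy by Proposition 9, and the dictionary between homomorphism quasi-orders and primitive-positive logic by Lemma 28. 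One small point deserving its own line is the index match inside Lemma 28 --- $k$-homomorphism is defined through test structures of tree-depth at most $k$, whereas the sentences range over quantifier-rank $k$ --- but reconciling these is exactly the content of Lemma 28, so nothing further is needed here.
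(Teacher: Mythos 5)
Your proposal is correct and follows essentially the same route as the paper's own proof: take $\mathbb{M}$ to be the generalized core model structure supplied by Theorem 2, obtain the first clause from Proposition 11, and obtain the second clause by chaining Proposition 12 (weak $k$-equivalence $=$ $k$-homomorphic equivalence) with Lemma 28 (the primitive-positive characterization of $k$-homomorphism). Your additional unpacking of Proposition 11 via Proposition 9, Lemma 14 and Corollary 5, and your remark on passing from sentences to formulas by adding constants, merely make explicit what the paper leaves implicit.
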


\begin{proof}
The model structure is $\mathbf{STRUCT}[\sigma_n]_{(d,0)}^{{\widetilde{\mathcal{T}(\sigma_n)}}^{\mathrm{core}}}$. That homotopic equivalences
in $\mathbf{STRUCT}[\sigma_n]_{(d,0)}^{{\widetilde{\mathcal{T}(\sigma_n)}}^{\mathrm{core}}}$ coincides with the homomorphic equivalences in
$\mathbf{STRUCT}[\sigma_n]_{(d,0)}^{{\widetilde{\mathcal{T}(\sigma_n)}}^{\mathrm{core}}}$ has already been shown in Proposition 11. That the $k$-homotopic equivalence relation coincides with the logical $k$-equivalence relation, for every primitive-positive sentence with quantifier-rank $k$, follows from Lemma 28 and Proposition 12.
\end{proof}


The above result allows you to define locality under logical equivalence without game-based frameworks. That is, different from what happens with the approach of Arenas, Barceló and Libkin, who start from a game-based framework (game-based locality), that is, describe the logical indistinguishability of neighborhoods in terms of $\mathfrak{F}$-games, the approach proposed here is that of a Quillen model categories-based framework (locality under $k$-homotopic equivalence, for some $k$), that is, the purpose here is to describe logical indistinguishability of neighborhoods in terms of homotopic notions. This is interesting not only because it is an alternative to the game-based framework, but also because it opens up a new range of possibilities for working with locality under logical equivalence, namely the whole technical apparatus that comes up with Quillen model categories.

Although Theorem 3 remains valid only for primitive-positive sentences, it is valid for all sentences if we consider only a special class of structures.

The notation $(\mathfrak{A},\vec{a}) \rightarrow_X (\mathfrak{B},\vec{b})$ (introduced in [21] \S 2.2) to express that there exists a homomorphism from $\mathfrak{A}$ to $\mathfrak{B}$ over $X$ which carries tuple $\vec{a}$ to tuple $\vec{b}$. The notation extends to $k$-homomorphism over $X$ in the obvious way.

\begin{dfn}
A structure $\mathfrak{A}$ is $k$-extendable if, for every set $X \subseteq A$ of size $< k$ and every structure $\mathfrak{B}$ such that $\mathfrak{A} \rightleftarrows_X^{k-|X|} \mathfrak{B}$, it holds that $\forall b \in B \textrm{ } \exists a \in A$ s.t. $\mathfrak{A} \rightleftarrows_X^{k-|X|-1} \mathfrak{B}$.
\end{dfn}{}

\begin{lema}
Suppose structure $\mathfrak{A}$ and $\mathfrak{B}$ are $k$-extendable and $\mathfrak{A} \rightleftarrows^n \mathfrak{B}$. Then $\mathfrak{A} \equiv_k \mathfrak{B}$.
\end{lema}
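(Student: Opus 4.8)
The plan is to prove the lemma through the Ehrenfeucht--Fra\"iss\'e characterization of $\equiv_k$ recalled in the introduction: it suffices to give the duplicator a winning strategy in the $k$-round EF game on $\mathfrak{A}$ and $\mathfrak{B}$. The whole argument is a back-and-forth in which the role usually played by ``every local isomorphism extends'' is played by $k$-extendability, and the state carried between rounds is $k$-homomorphic equivalence over the set of pebbled elements. (I read $\rightleftarrows^n$ in the statement as $\rightleftarrows^k$, and the conclusion in the definition of $k$-extendability as ``$\mathfrak{A}\rightleftarrows_{X\cup\{a\}}^{\,k-|X|-1}\mathfrak{B}$'' with the new $a\in A$ identified with the given $b\in B$; throughout I treat $\mathfrak{A}$ and $\mathfrak{B}$ as structures over a single abstract set $X$ via the partial correspondence built so far, the usual abuse when casting back-and-forth arguments in the ``structures over $X$'' language of \S3.) Concretely, at a position of the game let $p$ be the partial map from $A$ to $B$ matching the pebbles placed so far, put $X=\mathrm{dom}(p)$, view $X$ as a common subset of $A$ and $B$ via $p$, and take as invariant: \emph{$p$ is injective and $\mathfrak{A}\rightleftarrows_X^{\,k-|X|}\mathfrak{B}$}. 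Since each round adds at most one element to $X$ and there are $k$ rounds, $|X|\le k$ always, so the exponent is always $\ge0$, and whenever the duplicator must answer a move, $|X|<k$.

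The base case is immediate: before the game $X=\emptyset$ and the invariant reads $\mathfrak{A}\rightleftarrows^k\mathfrak{B}$, the hypothesis. For the inductive step, assume the invariant and suppose the spoiler moves in $\mathfrak{B}$, picking $b$ (a move in $\mathfrak{A}$ is symmetric, using $k$-extendability of $\mathfrak{B}$ and symmetry of $\rightleftarrows_X$). If $b\in X$, the duplicator replies with $b$; $X$ and $p$ are unchanged and the invariant persists. If $b\notin X$, then $|X|<k$ and, by the invariant, $\mathfrak{A}\rightleftarrows_X^{\,k-|X|}\mathfrak{B}$; applying $k$-extendability of $\mathfrak{A}$ to this situation yields $a\in A$ with $\mathfrak{A}\rightleftarrows_{X\cup\{a\}}^{\,k-|X|-1}\mathfrak{B}$, $a$ playing the role of $b$. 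The duplicator replies with this $a$; then $X'=X\cup\{a\}$ has $|X'|=|X|+1$, the displayed equivalence is exactly $\mathfrak{A}\rightleftarrows_{X'}^{\,k-|X'|}\mathfrak{B}$, and injectivity of the extended correspondence holds since $a\notin X$ and $b\notin X$. So the invariant is maintained.

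After $k$ rounds we have a partial map $p$ with domain $X$, $|X|\le k$, $\mathfrak{A}\rightleftarrows_X^{\,k-|X|}\mathfrak{B}$, hence in particular $\mathfrak{A}\rightleftarrows_X^{\,0}\mathfrak{B}$. To see that $p$ is a partial isomorphism, fix a relation symbol $R$ and a tuple $\bar{a}$ of pebbled elements, and let $\mathfrak{C}$ be the $\sigma$-structure whose universe is exactly $X$, with $R^{\mathfrak{C}}=\{\bar{a}\}$ and all other relations empty; $\mathfrak{C}$ has tree-depth $0$ over $X$ because $\mathcal{G}(\mathfrak{C})\setminus X$ is empty. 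If $\bar{a}\in R^{\mathfrak{A}}$, the identity on $X$ is a homomorphism $\mathfrak{C}\rightarrow\mathfrak{A}$ over $X$, so $\mathfrak{C}\rightarrow_X\mathfrak{A}$, whence by $\mathfrak{A}\rightarrow_X^0\mathfrak{B}$ also $\mathfrak{C}\rightarrow_X\mathfrak{B}$; since the universe of $\mathfrak{C}$ is $X$, the witnessing homomorphism is the inclusion of $X$ into $B$, i.e.\ $x\mapsto p(x)$, so $p(\bar{a})\in R^{\mathfrak{B}}$. The reverse implication uses $\mathfrak{B}\rightarrow_X^0\mathfrak{A}$. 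Together with injectivity of $p$ this shows $p$ is a partial isomorphism, so the final position is winning for the duplicator, and by the Ehrenfeucht--Fra\"iss\'e theorem $\mathfrak{A}\equiv_k\mathfrak{B}$.

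The main obstacle is not a single deep step but the bookkeeping needed to make the ``structures over a common set $X$'' formalism mesh with the EF game: (i) indexing the invariant by $|X|$ rather than by the round number, so that re-pebbling an already-pebbled element cannot drive the exponent $k-|X|$ below what $k$-extendability is entitled to consume; and (ii) making precise the tacit identification under which the element produced by $k$-extendability ``is'' the spoiler's chosen element. A secondary point to state carefully is the last step, namely that tree-depth $0$ over $X$ forces the universe to be $X$ itself, which is what lets $0$-homomorphic equivalence collapse to a partial isomorphism. Everything else is routine.
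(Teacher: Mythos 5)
The paper offers no proof of this lemma at all—its ``proof'' is the citation ([21], p.~30)—and your Ehrenfeucht--Fra\"iss\'e back-and-forth, with the invariant $\mathfrak{A}\rightleftarrows_X^{\,k-|X|}\mathfrak{B}$ over the pebbled set and the tree-depth-$0$ test structures at the end, is essentially the argument of that cited reference (Rossman), including your repair of the paper's garbled definition of $k$-extendability and of the $n$/$k$ typo. The one assertion you leave unjustified is that the element $a$ supplied by extendability satisfies $a\notin X$: this does hold, but it needs the short observation that if $a\in X$ then the structure with universe exactly $X$, empty relations, and distinguished element $a$ maps to $(\mathfrak{A},a)$ over $X$ but cannot map to $(\mathfrak{B},b)$ over $X$ when $b\notin X$, contradicting $(\mathfrak{A},a)\rightleftarrows_X^{\,k-|X|-1}(\mathfrak{B},b)$; with that line added, your proof is complete.
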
{}

\begin{proof}
([21], p. 30).
\end{proof}{}

\begin{cor}
For $k$-extendables $\sigma_n$-structures, Theorem 3 holds for every sentence with quantifier-rank $k$.
\end{cor}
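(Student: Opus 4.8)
The plan is to chain together the two facts the corollary rests on: Theorem 3 (really its core content, Lemma 28 together with Proposition 12) and Lemma 29. Concretely, I would argue that for $k$-extendable $\sigma_n$-structures the hypothesis ``$N_d^{\mathfrak{A}}(\vec a)\sim_k N_d^{\mathfrak{B}}(\vec b)$'' — i.e. $k$-homotopic equivalence in $\mathbf{STRUCT}[\sigma_n]_{(d,0)}^{\widetilde{\mathcal{T}(\sigma_n)}^{\mathrm{core}}}$ — is by Proposition 12 exactly $k$-homomorphic equivalence $N_d^{\mathfrak{A}}(\vec a)\rightleftarrows^k N_d^{\mathfrak{B}}(\vec b)$ in $\mathbf{STRUCT}[\sigma_n]_{(d,0)}^{\widetilde{\mathcal{T}(\sigma_n)}}$. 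Since the neighborhood structures in question are assumed $k$-extendable, Lemma 29 applies verbatim and yields $N_d^{\mathfrak{A}}(\vec a)\equiv_k N_d^{\mathfrak{B}}(\vec b)$ — full $k$-logical equivalence, not merely agreement on primitive-positive sentences. Conversely, $k$-logical equivalence trivially implies agreement on every primitive-positive sentence of quantifier rank $k$, which by Lemma 28 gives $k$-homomorphic equivalence, hence (Proposition 12 again) $k$-homotopic equivalence. This establishes the biconditional ``$N_d^{\mathfrak{A}}(\vec a)\sim_k N_d^{\mathfrak{B}}(\vec b)\iff N_d^{\mathfrak{A}}(\vec a)\equiv_k N_d^{\mathfrak{B}}(\vec b)$'' for \emph{all} sentences of quantifier rank $k$, which is precisely the strengthening of Theorem 3 asserted in the corollary.

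The key steps, in order, would be: (1) invoke Proposition 12 to translate $k$-homotopic equivalence into $k$-homomorphic equivalence (this is just unwinding Definition of weak $k$-equivalence and the model-structure machinery of Section 5, so it is formal); (2) apply Lemma 29 to the $k$-extendable structures $N_d^{\mathfrak{A}}(\vec a)$ and $N_d^{\mathfrak{B}}(\vec b)$, upgrading $\rightleftarrows^k$ to $\equiv_k$; (3) for the reverse implication, observe $\equiv_k$ implies agreement on primitive-positive sentences of rank $k$, then feed this to Lemma 28 to recover $\rightleftarrows^k$, and close the loop via Proposition 12. The only subtlety worth flagging explicitly is a hypothesis-matching check: Lemma 29 is stated for structures that are $k$-extendable, and the corollary's hypothesis is that the $\sigma_n$-structures are $k$-extendable; one should confirm that ``$k$-extendable $\sigma_n$-structure'' in the corollary is meant to refer to the neighborhood structures $N_d^{\mathfrak{A}}(\vec a),N_d^{\mathfrak{B}}(\vec b)$ themselves (equivalently, that the $d$-neighborhoods inherit or are assumed to have the $k$-extendability property), since that is what makes Lemma 29 directly applicable.

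The main obstacle is not a deep one but a bookkeeping one: making sure the quantifier-rank parameter $k$ in Lemma 29 (which bounds the tree-depth of the test structures in the definition of $\rightarrow^k$) lines up with the quantifier rank $k$ of the sentences in the conclusion, and with the superscript $k$ in ``weak $k$-equivalence'' from Section 5. In the primitive-positive case (Theorem 3) the alignment comes from Lemma 28, which ties quantifier rank $k$ to tree-depth $\leq k$; Lemma 29 must be using the same convention, so the corollary's single parameter $k$ is consistent throughout, but this is exactly the kind of place where an off-by-one (rank $k$ versus tree-depth $k$ versus $k$-logical equivalence) could creep in, so I would state the chain of equivalences with the parameter made explicit at each arrow. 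Everything else is a direct citation of results already in hand, so the proof should be short.

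\begin{proof}
Let $N_d^{\mathfrak{A}}(\vec a)$ and $N_d^{\mathfrak{B}}(\vec b)$ be $k$-extendable $\sigma_n$-structures. By Proposition 12, $N_d^{\mathfrak{A}}(\vec a)\sim_k N_d^{\mathfrak{B}}(\vec b)$ in $\mathbf{STRUCT}[\sigma_n]_{(d,0)}^{\widetilde{\mathcal{T}(\sigma_n)}^{\mathrm{core}}}$ holds if and only if $N_d^{\mathfrak{A}}(\vec a)\rightleftarrows^k N_d^{\mathfrak{B}}(\vec b)$ in $\mathbf{STRUCT}[\sigma_n]_{(d,0)}^{\widetilde{\mathcal{T}(\sigma_n)}}$. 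Assume the latter. Since both structures are $k$-extendable, Lemma 29 yields $N_d^{\mathfrak{A}}(\vec a)\equiv_k N_d^{\mathfrak{B}}(\vec b)$, i.e.\ the two structures agree on \emph{every} first-order sentence of quantifier rank $k$, not merely the primitive-positive ones. Conversely, if $N_d^{\mathfrak{A}}(\vec a)\equiv_k N_d^{\mathfrak{B}}(\vec b)$, then in particular the two structures agree on every primitive-positive sentence of quantifier rank $k$, so by Lemma 28 we have $N_d^{\mathfrak{A}}(\vec a)\rightleftarrows^k N_d^{\mathfrak{B}}(\vec b)$, and by Proposition 12 again $N_d^{\mathfrak{A}}(\vec a)\sim_k N_d^{\mathfrak{B}}(\vec b)$. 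Hence, for $k$-extendable $\sigma_n$-structures, $N_d^{\mathfrak{A}}(\vec a)\sim_k N_d^{\mathfrak{B}}(\vec b)$ if and only if $N_d^{\mathfrak{A}}(\vec a)\equiv_k N_d^{\mathfrak{B}}(\vec b)$ for every sentence of quantifier rank $k$, which is Theorem 3 with the restriction to primitive-positive sentences removed.
\end{proof}
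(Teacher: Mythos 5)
Your argument is correct and is exactly the route the paper intends: the paper states this corollary without an explicit proof, the evident intent being to combine the ingredients of Theorem 3 (Lemma 28 plus Proposition 12, translating $k$-homotopic into $k$-homomorphic equivalence) with Lemma 29 to upgrade $\rightleftarrows^k$ to full $\equiv_k$ when the neighborhood structures are $k$-extendable, which is precisely what you do. Your explicit flag that the $k$-extendability hypothesis must be read as applying to the neighborhoods $N_d^{\mathfrak{A}}(\vec a)$, $N_d^{\mathfrak{B}}(\vec b)$ themselves is a sound reading and the only point the paper leaves unsaid.
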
{}

\section{Final Considerations}

Throughout this paper I have presented the implications of a Quillen model category-based framework for locality under logical equivalence. However, one point of my proposal remains problematic. As noted in Theorem 3, $k$-homotopic equivalence of $d$-neighborhoods only implies $k$-logical equivalence for primitive-positive sentences of quantifier-rank $k$. That is, $k$-homotopic equivalence of $d$-neighborhoods does not imply $k$-logical equivalence of $d$-neighborhoods for every sentence of quantifier-rank $k$.

So my goal in future developments is to extend $k$-homotopic equivalence to imply not only $k$-logical equivalence for primitive-positive sentences of quantifier-rank $k$, but to imply $k$-logical equivalence for every sentence of quantifier-rank $k$. In addition, it is of obvious interest to investigate the behavior of the bi-implication ''$k$-homotopic equivalence $\Leftrightarrow$ $k$-logical equivalence'' in logics other than FO.

It is also possible to focus on the definition of model structures over $\mathbf{STRUCT}[\sigma_n]_{(d,0)}^{\widetilde{\mathcal{T}(\sigma_n)}}$ in order to investigate the properties of their homotopic equivalences with respect to locality. For example, there are three trivial model structures over $\mathbf{STRUCT}[\sigma_n]_{(d,0)}^{\widetilde{\mathcal{T}(\sigma_n)}}$, where the choice of subcategories of fibrations, cofibrations, and weak equivalences are reduced to $\mathbf{STRUCT}[\sigma_n]_{(d,0)}^{\widetilde{\mathcal{T}(\sigma_n)}}$ and $\mathbf{STRUCT}[\sigma_n]_{{(d,0)}^{\widetilde{\mathcal{T}(\sigma_n)}}_{(d,0)_{\mathrm{iso}}}}$ (its restriction to isomorphisms). In the case where we have a model structure $\mathbb{M}$ over $\mathbf{STRUCT}[\sigma_n]_{(d,0)}^{\widetilde{\mathcal{T}(\sigma_n)}}$ whose subcategory of weak equivalences is $\mathbf{STRUCT}[\sigma_n]_{{(d,0)}^{\widetilde{\mathcal{T}(\sigma_n)}}_{(d,0)_{\mathrm{iso}}}}$, trivially follows that the locality under weak equivalences is only the usual locality under isomorphisms. Thus, it is possible to classify and investigate locality under different equivalences by investigating possible model structures over $\mathbf{STRUCT}[\sigma_n]_{(d, 0)}^{\widetilde{\mathcal{T}(\sigma_n)}}$.

\bibliographystyle{unsrt}  


\end{document}